\numberwithin{equation}{section}
\newtheorem{theorem}{Theorem}[section]
\newtheorem{proposition}[theorem]{Proposition}
\newtheorem{lemma}[theorem]{Lemma}
\newtheorem{corollary}[theorem]{Corollary}
\theoremstyle{definition}
\newtheorem{definition}[theorem]{Definition}
\newtheorem{remark}[theorem]{Remark}
\definecolor{darkred}{rgb}{1,0,0}
\definecolor{darkgreen}{rgb}{0,1,0}
\definecolor{darkblue}{rgb}{0,0,1}
\begin{document}

\baselineskip=15pt

\title[Pullback and direct image of parabolic connections and Higgs bundles]{Pullback and direct
image of parabolic connections and parabolic Higgs bundles}

\author[D. Alfaya]{David Alfaya}

\address{Department of Applied Mathematics and Institute for Research in Technology, ICAI School of Engineering,
Comillas Pontifical University, C/Alberto Aguilera 25, 28015 Madrid, Spain}

\email{dalfaya@comillas.edu}

\author[I. Biswas]{Indranil Biswas}

\address{School of Mathematics, Tata Institute of Fundamental
Research, Homi Bhabha Road, Mumbai 400005, India}

\email{indranil@math.tifr.res.in}

\subjclass[2020]{14H30, 14H60, 14E20}

\keywords{}

\date{}

\begin{abstract}
We provide an explicit algebraic construction for the pullback and direct image of parabolic bundles, parabolic Higgs bundles and parabolic 
connections through maps between Riemann surfaces. We show that these constructions preserve semistability and
polystability, and we prove that they are compatible with the nonabelian Hodge correspondence.
\end{abstract}

\maketitle

\tableofcontents

\section{Introduction}

Let $X$ be a compact connected Riemann surface, and let $D\,=\,\{x_1,\,\ldots,\,x_l\}$ be a finite subset of $X$. A parabolic bundle on $(X,
\,D)$ is a vector bundle $E$ on $X$ together with a weighted flag at the fiber $E_{x_i}$ for each $i\,=\,1,\,\ldots,\,l$, i.e., a
decreasing filtration by subspaces of $E_{x_i}$
$$E_{x_i}\,=\,E_i^1 \,\supsetneq\, E_i^2 \,\supsetneq \,\cdots \,\supsetneq \,E_i^{n_i} \,\supsetneq\, 0$$
together with an associated set of parabolic weights
$$0\,\le\, \alpha_i^1 \,< \,\alpha_i^2 \,<\,\cdots \,<\, \alpha_i^{n_i}\, .$$
Parabolic bundles were introduced by Mehta and Seshadri \cite{MS} for curves and a generalization for higher dimensional varieties
was later provided by Maruyama and Yokogawa \cite{MY}.

Let $K_X$ denote the canonical bundle of $X$. Given a parabolic vector bundle $E_*$ on $(X,\,D)$, a parabolic Higgs field on $E_*$ is an
$\mathcal{O}_X$-linear map
$$\theta\,:\,E\,\longrightarrow\, E\otimes K_X(D)$$
that preserves the parabolic filtration at the parabolic points. Such a pair $(E_*,\,\theta)$ is called a parabolic Higgs bundle on $(X,\,
D)$. Analogously, a parabolic connection on $E_*$ is a logarithmic connection on $E$ with poles over $D$ which preserves the
filtration, i.e., it is a $\mathbb{C}$-linear map
$$\nabla\,:\,E\,\longrightarrow \,E\otimes K_X(D)$$
satisfying the Leibniz rule, which says that
$$\nabla(fs)\,=\,f\nabla(s)+s\otimes \partial f$$
for all local holomorphic functions $f$ and all local holomorphic sections $s$ of $E$, such that the residue at each parabolic point
$$\text{Res}(\nabla,\,x_i)\,:\, E_{x_i}\,\longrightarrow\, E_{x_i}$$
preserves the parabolic filtration (more details are in Section \ref{se2}). These filtered logarithmic analogues of Higgs bundles
and connections where introduced by Simpson \cite{Si} as the natural objects which arise in the process of
extending the classical nonabelian Hodge correspondence between Higgs bundles, connections and $\operatorname{GL}(r,
\mathbb{C})$-representations of the fundamental group $\pi_1(X)$ of a Riemann surface $X$
to the noncompact case. In particular, strongly parabolic Higgs bundles, which are parabolic bundles with nilpotent residue at the parabolic points, are the natural objects which correspond to representations of $\pi_1(X\backslash D)$ in $\operatorname{GL}(r,\mathbb{C})$.

Let us suppose that we have a marked smooth projective curve $(X,\,D)$ as before, together with either a parabolic bundle $E_*$, or a parabolic Higgs bundle
$(E_*,\,\theta)$ or a parabolic connection $(E_*,\,\nabla)$ on $(X,D)$. Let $f\,:\,Y\,\longrightarrow\, X$ and
$\phi\,:\,X\, \longrightarrow\, Z$ be nonconstant holomorphic maps between compact Riemann surfaces.
In this work we study the pullbacks to $Y$ and direct images to $Z$ of these types of 
parabolic objects on $(X,\, D)$ through these maps.

Pullbacks and direct images of parabolic objects have been explored in the literature before in different contexts. Using the equivalence 
between parabolic bundles on a curve and orbifold bundles on an associated root stack \cite{Bi2}, Dhillon and Joyner \cite{DJ} have explored 
pullbacks of parabolic bundles between curves and studied in depth the pullbacks to covers of $\mathbb{P}^1\backslash \{0,1,\infty\}$. Kumar 
and Majumder \cite{KM} defined pullbacks and direct images in positive characteristic based on the same correspondence. In \cite{BM}, 
explicit descriptions of the direct images of a parabolic bundle or a parabolic connection for a map between curves were provided and in 
\cite{FL} the pullback of parabolic Higgs bundles through degree two maps between curves was studied.

In this paper we provide explicit algebraic descriptions for the pullbacks and direct images of parabolic vector bundles, parabolic 
Higgs bundles and parabolic connections through any nonconstant (possibly ramified) map of smooth projective curves and we study their stability. The main
results of this work can then be summarized in the following theorem:

\begin{theorem}[Lemma \ref{lemn1}, Proposition \ref{prop4}, Lemma \ref{lem2}, Theorem \ref{thm1}, Proposition \ref{prop3}, Proposition \ref{pr1} and Proposition \ref{prop:semistableConn}]
Let $E_*$ be a parabolic bundle on $(X,\, D)$, and let $(E_*,\,\theta)$ be a parabolic Higgs bundle. Let $(E_*,\,\nabla)$ be a parabolic connection
on $(X,\,D)$. Let $f:Y\longrightarrow X$ and $\phi:X\longrightarrow Z$ be two nonconstant maps between connected smooth
complex projective curves. Then the following statements hold:
\begin{enumerate}
\item $f^*E_*$, $f^*(E_*,\,\theta)$ and $f^*(E_*,\,\nabla)$ are semistable if and only if $E_*$, $(E_*,\,\theta)$ and $(E_*,\, \nabla)$
are semistable respectively.

\item $\phi_*E_*$, $\phi_*(E_*,\,\theta)$ and $\phi_*(E_*,\,\nabla)$ are semistable if and only if $E_*$, $(E_*,\,\theta)$ and
$(E_*,\,\nabla)$ are semistable respectively.

\item $f^*E_*$ is polystable if and only if $E_*$ is polystable.

\item If $E_*$ is polystable then $\phi_*E_*$ is polystable.
\end{enumerate}
\end{theorem}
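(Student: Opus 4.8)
The plan is to bootstrap from the pullback statement (3), which I may use in both directions, by base-changing the direct image along a Galois cover of $Z$ that dominates $X$. Concretely, I would first choose a connected smooth projective curve $W$ together with a nonconstant map $\psi\colon W\to X$ such that $\chi:=\phi\circ\psi\colon W\to Z$ is Galois, say with group $G$ and with $\psi$ corresponding to a subgroup $H\leq G$; such a $W$ exists by passing to the Galois closure of the field extension $\mathbb{C}(X)/\mathbb{C}(Z)$ and taking its smooth projective model. Over the unramified locus of the Galois cover $\chi$ the intermediate cover $\phi$ becomes completely split, so that on underlying bundles one has a canonical identification
\[
\chi^{*}\bigl(\phi_{*}E_{*}\bigr)\;\cong\;\bigoplus_{g\in G/H}(\psi_{g})^{*}E_{*},
\]
where the $\psi_{g}\colon W\to X$ run over the $G$-translates of $\psi$ (so each $\psi_{g}$ is $\psi$ precomposed with an automorphism of $W$). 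This is the parabolic incarnation of flat base change for the fibre square relating $\chi$, $\phi$ and $\psi$.

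Granting this isomorphism of parabolic bundles, the conclusion follows quickly. By part (3) the pullback $\psi^{*}E_{*}$ is polystable, and each summand $(\psi_{g})^{*}E_{*}$ is obtained from $\psi^{*}E_{*}$ by pulling back along an automorphism of $W$; hence every summand is polystable and they all share a common parabolic slope. A finite direct sum of polystable parabolic bundles of a common slope is again polystable, so $\chi^{*}(\phi_{*}E_{*})$ is polystable. Applying part (3) once more, now in the converse direction—$\chi^{*}F_{*}$ polystable forces $F_{*}$ polystable—to the nonconstant map $\chi\colon W\to Z$ with $F_{*}=\phi_{*}E_{*}$, I conclude that $\phi_{*}E_{*}$ is polystable, as desired.

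The step I expect to be the main obstacle is the parabolic base-change isomorphism displayed above. Away from the branch locus of $\chi$ it is just the standard flat base change for a finite map, but I must check that it is compatible with the parabolic structures—in particular that the weights assigned to $\phi_{*}E_{*}$ over the branch and ramification points of $\phi$ transform correctly under $\chi^{*}$ and match those of the $(\psi_{g})^{*}E_{*}$. This is precisely where the explicit local descriptions of the pullback and the direct image constructed earlier in the paper are needed, since at these points the scheme-theoretic fibre product is typically non-reduced and the naive base change must be replaced by its parabolic refinement. Once this local bookkeeping is in place the argument is purely formal. (Alternatively, when the weights are rational one could argue through the Mehta–Seshadri correspondence, identifying $\phi_{*}E_{*}$ with the representation of $\pi_{1}$ induced from the unitary representation attached to $E_{*}$ and using that induction preserves unitarity; the base-change argument has the advantage of avoiding any rationality or density hypothesis on the weights.)
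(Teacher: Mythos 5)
Your proposal addresses only item (4) of the statement: it takes item (3) as an input (which is legitimate, since the paper's own Proposition \ref{prop3} likewise rests on Theorem \ref{thm1}) but it says nothing at all about items (1) and (2), so as a proof of the full theorem it is incomplete. For item (4) itself, your strategy is essentially the one in the paper: Proposition \ref{prop3} is proved by running the argument of Proposition \ref{prop4} with semistability replaced by polystability, and that argument, exactly like yours, chooses a curve $M$ and a map $\rho\colon M\to X$ with $\phi\circ\rho$ Galois (your $\psi$ and $\chi=\phi\circ\psi$), ascends polystability along $\rho$ via Theorem \ref{thm1}, produces a direct sum of Galois translates of $\rho^*E_*$ (which is polystable of a single slope), and then descends polystability along $\phi\circ\rho$ via Theorem \ref{thm1} again.

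The genuine gap is your displayed base-change isomorphism $\chi^{*}(\phi_{*}E_{*})\cong\bigoplus_{g\in G/H}(\psi_{g})^{*}E_{*}$. You correctly flag it as the main obstacle, but you never prove it, and at the ramification points of $\phi$ it is precisely where all the content lies: the fibre product is non-reduced there, and one must check that the floor-twists in the parabolic pullback exactly absorb the weights $\frac{k+\alpha_x}{m_x}$ assigned by the direct image construction. The paper never establishes such a Mackey-type formula for a non-Galois intermediate cover; its Proposition \ref{propn1}(2) covers only the Galois case $\chi^{*}\chi_{*}F_{*}\cong\bigoplus_{\gamma\in{\rm Gal}(\chi)}\gamma^{*}F_{*}$. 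Instead, the paper's proof sidesteps your stronger claim: it applies Proposition \ref{propn1}(2) to $F_{*}=\psi^{*}E_{*}$, observes that $\chi^{*}\phi_{*}E_{*}$ is a parabolic subbundle of $\chi^{*}\chi_{*}\psi^{*}E_{*}$ (pull back the natural inclusion $\phi_{*}E_{*}\subset\phi_{*}\psi_{*}\psi^{*}E_{*}=\chi_{*}\psi^{*}E_{*}$), and that the two have the same parabolic slope by Lemma \ref{lem-n} and Lemma \ref{lemn2}(1); a same-slope parabolic subbundle of a polystable parabolic bundle is a direct summand, hence polystable, and Theorem \ref{thm1} then descends polystability to $\phi_{*}E_{*}$. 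So your argument closes up (and then coincides with the paper's) once you replace the unproven isomorphism by this weaker subbundle-plus-equal-slope statement; otherwise you must actually carry out the local bookkeeping you deferred, which the paper never needs to do.
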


Regarding the compatibility between the pullback and direct image and the nonabelian Hodge correspondence, Donagi, Pantev and Simpson 
\cite{DPS} have provided descriptions of higher direct images and proven their compatibility with nonabelian Hodge correspondence when a map 
from a surface to a curve is considered. The techniques in \cite{DPS} can also be extended to a broader framework, and it is also mentioned 
in the paper that, in general, the pullback and pushforward operations are well-defined for harmonic bundles and local systems and that they 
commute with nonabelian Hodge correspondence. Nevertheless, the case of a ramified map between algebraic curves has not been treated 
explicitly in the literature. We show explicitly that the algebraic constructions for the pullback and direct image provided in this work 
agree with the ones arising metrically from the acceptable metrics which are part of the tame harmonic bundles involved in the nonabelian 
Hodge theory for noncompact curves \cite{Si}. We prove the following result showing that taking the pullback or direct image of parabolic 
Higgs bundles and parabolic connections commute with the nonabelian Hodge correspondence.

\begin{theorem}[Theorem \ref{thm:pullbackNAHT} and Theorem \ref{thm:directNAHT}]
Let $(E_*,\,\theta)$ be a parabolic Higgs bundle, and let $(E_*,\,\nabla)$ be a parabolic connection on $(X,\,D)$ which are related by
the nonabelian Hodge correspondence. Let $f\,:\,Y\,\longrightarrow\, X$ and $\phi\,:\,X\,\longrightarrow\, Z$ be two nonconstant map
between connected smooth complex projective curves. Then the following statements hold:
\begin{enumerate}
\item $f^*(E_*,\,\theta)$ is a parabolic Higgs bundle, and $f^*(E_*,\,\nabla)$ is a parabolic connection, on $(Y,\,f^{-1}(D)_{\text{red}})$ which 
are related through nonabelian Hodge correspondence.

\item $\phi_*(E_*,\,\theta)$ is a parabolic Higgs bundle, and $\phi_*(E_*,\,\nabla)$ is a parabolic connection on $(Z,\,\Delta)$, which
are related through nonabelian Hodge correspondence, where $\Delta$ is the image of the union of $D$ and the ramification locus of $\phi$.
\end{enumerate}
\end{theorem}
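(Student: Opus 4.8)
The plan is to establish both statements through the \emph{harmonic (acceptable) metric} that mediates Simpson's nonabelian Hodge correspondence \cite{Si}: a polystable parabolic Higgs bundle $(E_*,\,\theta)$ of parabolic degree $0$ carries a tame harmonic metric $h$ on $E|_{X\setminus D}$ solving Hitchin's equation $F_h+[\theta,\,\theta^{*_h}]\,=\,0$ and adapted to the parabolic weights, this metric being unique up to the automorphisms of the object; the associated parabolic connection is then $\nabla\,=\,\nabla_h+\theta+\theta^{*_h}$, where $\nabla_h$ is the Chern connection of $h$. Since the constructions are functorial and additive under orthogonal direct sums, and since the results quoted in the first theorem show that pullback and direct image preserve semistability and polystability, I would first reduce to the case where the relevant object is polystable of parabolic degree $0$ (degree $0$ being preserved by the normalized constructions), so that a harmonic metric exists. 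The whole argument then rests on producing, in each case, the correct candidate metric downstairs or upstairs, checking that it is harmonic and adapted, and invoking \emph{uniqueness}.

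For the pullback I would take $f^*h$ as the candidate metric on $f^*E$ over $Y\setminus f^{-1}(D)_{\mathrm{red}}$. Hitchin's equation is a tensorial identity between a $(1,1)$--form and the bracket of a $(1,0)$--form with its $h$--adjoint, hence it is preserved under holomorphic pullback: one has $F_{f^*h}\,=\,f^*F_h$ and $(f^*\theta)^{*_{f^*h}}\,=\,f^*(\theta^{*_h})$, while the ramification factors produced by $f^*(ds/s)\,=\,e\,dt/t$ at a point of ramification index $e$ are absorbed consistently on both terms, so that $f^*\big(F_h+[\theta,\theta^{*_h}]\big)\,=\,0$. Consequently $\nabla_{f^*h}+f^*\theta+(f^*\theta)^{*_{f^*h}}\,=\,f^*\nabla$, which I would then identify with the explicit algebraic pullback connection already constructed in the paper. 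The genuinely analytic point is that $f^*h$ must be shown to be acceptable and adapted to the pullback parabolic structure: near a ramification point the model growth $|s|^{2\alpha}$ becomes $|t|^{2e\alpha}$ under $s\,=\,t^e$, which is exactly the weight $e\alpha$ carried by the algebraically defined pullback, and the Poincar\'e--type curvature bound defining acceptability pulls back as well. Uniqueness of the harmonic metric then identifies $f^*h$ as the harmonic metric of $f^*(E_*,\theta)$ and yields statement (1).

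For the direct image, away from the branch locus $\Delta$ the map $\phi$ is an unramified covering, so over a small disk $V\subset Z\setminus\Delta$ one has $\phi^{-1}(V)\,=\,\bigsqcup_j U_j$ with each $\phi|_{U_j}$ a biholomorphism; thus $(\phi_*E)|_V$ is an orthogonal direct sum of biholomorphic transports of $(E,\theta,h)$, and both Hitchin's equation and the passage to the flat connection hold there termwise. This produces a metric $\phi_*h$ on $(\phi_*E)|_{Z\setminus\Delta}$, whose associated flat connection I would match with the explicit algebraic direct image connection of the paper. The substantive work is the behavior near a branch point $z_0\in\Delta$: writing $\phi^{-1}(z_0)\,=\,\{x_1,\ldots,x_p\}$ with ramification indices $e_i$ and local coordinates $s\,=\,t_i^{\,e_i}$, the module generators $t_i^{\,k}$ for $k\,=\,0,\ldots,e_i-1$ convert the upstairs growth $|t_i|^{2(\alpha+k)}$ into the downstairs growth $|s|^{2(\alpha+k)/e_i}$, which must be shown to reproduce exactly the spread of weights $(\alpha+k)/e_i$ appearing in the algebraically defined direct image parabolic structure, and $\phi_*h$ must be shown to be acceptable across $\Delta$.

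I expect this last point --- the asymptotic analysis of the pushforward metric near the branch points, including the logarithmic corrections coming from the nilpotent part of the residue in a tame harmonic bundle, together with the verification of acceptability there --- to be the main obstacle. On the complement of $\Delta$ everything is forced by the covering structure, and at the level of harmonic bundles and local systems the compatibility is available in the general framework of \cite{DPS}, so the essential new content is reconciling the \emph{explicit algebraic} parabolic constructions of this paper with the analytic pushforward. Once $\phi_*h$ is shown to be the harmonic metric adapted to the direct image parabolic structure, its uniqueness identifies $\phi_*(E_*,\theta)$ and $\phi_*(E_*,\nabla)$ as nonabelian Hodge partners on $(Z,\Delta)$, giving statement (2).
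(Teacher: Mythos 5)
Your proposal follows essentially the same strategy as the paper: mediate the correspondence through the tame harmonic bundle, transport the metric under $f^*$ and $\phi_*$, match the growth filtrations of Simpson's Proposition 3.1 against the algebraic parabolic structures, and clinch the identification by a uniqueness argument. The differences are in execution, and they are worth noting. First, where you verify harmonicity of the pullback by observing that Hitchin's equation $F_h+[\theta,\theta^{*_h}]=0$ is tensorial and pulls back, the paper instead uses Simpson's equivalence with pairs (representation of $\pi_1$, equivariant harmonic map to $\operatorname{GL}(r,\mathbb{C})/{\rm U}(r)$) and the fact that a harmonic map composed with a holomorphic map is harmonic; both arguments are valid and yield the same conclusion. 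Second, your identification step invokes uniqueness of the harmonic \emph{metric} adapted to the pullback (resp.\ direct image) parabolic structure, whereas the paper invokes uniqueness of the \emph{extension} of $f^*\theta$ (resp.\ $\phi_*\theta$, $\phi_*\nabla$) across the parabolic divisor, i.e.\ Proposition \ref{prop1} together with Remark \ref{lemma:unique}: once the metric filtration is shown to equal the algebraic parabolic bundle, any Higgs field or connection extending the one on the punctured curve is unique, so the harmonically induced one must be the algebraic one. This second route has a practical advantage you should note: it lets the paper deduce \emph{tameness} of the pushed-forward harmonic bundle at the end (the algebraic extension exists, hence the singularities are logarithmic), rather than having to establish tame asymptotics of $\phi_*K$ near $\Delta$ up front. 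Finally, what you defer as ``the main obstacle'' --- the asymptotic analysis at ramification and branch points --- is precisely the content of the paper's two proofs, carried out in the left-continuous filtration formalism: sections of $f^*\mathcal{E}$ are expanded as $\sum_k y^k f^*e_k$ and sections of $\phi_*\mathcal{F}$ as tuples $(e_1,\ldots,e_{b_z})$, and the growth conditions translate exactly into the jumps $m_{i,j}\alpha_i^t \pmod{\mathbb{Z}}$ and $(\alpha_i^j+k)/m_i$ of the algebraic constructions. It is more routine than you anticipate: the logarithmic corrections coming from the nilpotent part of the residue, which you flag as a difficulty, are automatically absorbed by the $\varepsilon$-slack in Simpson's defining condition $|e|_K\le Cr^{\alpha-\varepsilon}$ for all $\varepsilon>0$, so no separate analysis of them is needed.
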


Tables of the expected parabolic weights and eigenvalues of the resulting parabolic objects have also been computed (see Table 
\ref{table:pullback} and Table \ref{table:direct}).

The paper is organized as follows. The main definitions and notations about parabolic bundles, parabolic Higgs bundles and parabolic 
connections are introduced in Section \ref{se2}. The pullback and direct image of parabolic bundles and some of their main properties are 
described through Sections \ref{se3} and \ref{se4}. Section \ref{section:pullback} is devoted to the construction of the pullback of 
parabolic Higgs bundles and parabolic connections and the study of its main properties, including the main polystability preservation 
results of this work (Theorem \ref{thm1} and Proposition \ref{prop3}). Analogously, Section \ref{section:directimage} is devoted to the 
direct image of parabolic Higgs bundles and parabolic connections. Finally, the compatibility between the previous constructions and 
nonabelian Hodge Theory is addressed in Section \ref{section:naht}.

\section{Parabolic Higgs bundles and parabolic connections}
\label{se2}

\subsection{Parabolic Higgs bundles}\label{se2.1}

Let $X$ be a compact connected Riemann surface. The holomorphic cotangent bundle of $X$ will be denoted by $K_X$.
Let
$$
D\,\,:=\,\, \{x_1,\, \cdots,\, x_\ell\}\,\, \subset\,\, X
$$
be a finite subset. The divisor $\sum_{i=1}^\ell x_i$ will also be denoted by $D$. For a holomorphic vector bundle
$V$ on $X$, the vector bundle $V\otimes {\mathcal O}_X(D)$ will be denoted by $V(D)$.

Take a holomorphic vector bundle $E$ on $X$. A quasiparabolic structure on $E$ is a
strictly decreasing filtration of subspaces
\begin{equation}\label{e1}
E_{x_i}\,=\, E^1_i\, \supsetneq\, E^2_i \,\supsetneq\, \cdots\, \supsetneq\,
E^{n_i}_i \, \supsetneq\, E^{n_i+1}_i \,=\, 0
\end{equation}
for every $1\, \leq\, i\, \leq\, \ell$; here $E_{x_i}$ denotes the fiber
of $E$ over the point $x_i\,\in\, D$. A \textit{parabolic structure} on $E$ is a
quasiparabolic structure as above together with $\ell$ increasing sequences of real numbers
\begin{equation}\label{e2}
0\, \leq\, \alpha^1_i\, <\, \alpha^2_i\, <\,
\cdots\, < \, \alpha^{n_i}_i \, < 1\, , \ \ 1\, \leq\, i\, \leq\, \ell \, ;
\end{equation}
the real number $\alpha^j_i$ is called the parabolic weight of the subspace $E^j_i$ in
the quasiparabolic filtration in \eqref{e1}. To clarify, $j$ in $\alpha^j_i$ is an index
and not an exponent. The multiplicity of a parabolic weight $\alpha^j_i$ at
$x_i$ is defined to be the dimension of the complex vector space $E^j_i/E^{j+1}_i$.
A parabolic vector bundle is a holomorphic vector bundle with a parabolic structure.

The \textit{parabolic degree} of a parabolic vector bundle $E_*\,:=\, \left(E,\, \{E^j_i\}, \,\{\alpha^j_i\}\right)$
is defined to be
$$
\text{par-deg}(E_*)\,=\, \text{degree}(E)+\sum_{i=1}^\ell \sum_{j=1}^{n_i}
\alpha^j_i\cdot\dim \left(E^j_i/E^{j+1}_i\right)
$$
\cite[p.~214, Definition~1.11]{MS}, \cite[p.~78]{MY}. The real number
$$
\mu(E_*)\,\, :=\, \frac{\text{par-deg}(E_*)}{\text{rank}(E_*)}
$$
is called the slope of $E_*$.

Take any holomorphic subbundle $F\, \subset\, E$. For each $x_i\, \in\, D$, the
fiber $F_{x_i}$ has a filtration of subspaces obtained by intersecting the quasiparabolic filtration
of $E_{x_i}$ with the subspace $F_{x_i}\, \subset\, E_{x_i}$. The parabolic weight of a subspace $B\,
\subset\, F_{x_i}$ occurring in this filtration of subspaces of $E_{x_i}$ is the maximum of the numbers
$$\{\alpha^j_i\, \,\mid\,\, B\, \subset \, E^j_i\cap F_{x_i}\}\, .$$
This way, the parabolic structure on $E$ produces a parabolic structure on the subbundle $F$.
The resulting parabolic bundle will be denoted by $F_*$.

A parabolic vector bundle $$E_*\,=\, \left(E,\, \{E^j_i\}, \,\{\alpha^j_i\}\right)$$ is called
\textit{stable} (respectively, \textit{semistable}) if for all holomorphic subbundles
$F\, \subsetneq\, E$ of positive rank the following inequality holds:
$$
\mu(F_*)\, <\, \mu(E_*)\ \ \left(\text{respectively, }\, \ \mu(F_*) \, \leq\, \mu(E_*)\right).
$$

A parabolic vector bundle is called \textit{polystable} if it is a direct sum of stable
parabolic vector bundles of same parabolic slope.

Take a parabolic bundle $E_*\,:=\, \left(E,\, \{E^j_i\}, \,\{\alpha^j_i\}\right)$. Let
\begin{equation}\label{e3}
\text{End}_P(E_*)\, \subset\, \text{End}(E)\,=\, E\otimes E^*
\end{equation}
be the coherent analytic subsheaf defined by the following condition: A holomorphic
section $s\, \in\, H^0(U,\, \text{End}(E)\big\vert_U)$, where $U\, \subset\, X$ is any
open subset, is a section of $\text{End}_P(E_*)\big\vert_U$ if $s(E^j_i)\, \subset\, E^j_i$
for all $x_i\, \in\, U$ and all $1\, \leq\, j\, \leq\, n_i$. Let
\begin{equation}\label{e3a}
\text{End}_n(E_*)\, \subset\, \text{End}_P(E_*)
\end{equation}
be the coherent analytic subsheaf defined by the following condition: A holomorphic
section $s\, \in\, H^0(U,\, \text{End}_P(E_*)\big\vert_U)$, where $U\, \subset\, X$ is any
open subset, is a section of $\text{End}_n(E_*)\big\vert_U$ if $s(E^j_i)\, \subset\, E^{j+1}_i$
for all $x_i\, \in\, U$ and all $1\, \leq\, j\, \leq\, n_i$.

A Higgs field on $E_*$ is a holomorphic section
$$
\theta\, \in\, H^0(X,\, \text{End}_P(E_*)\otimes K_X(D))
$$
(see \eqref{e3}).
If a Higgs field $\theta$ is section of $\text{End}_n(E_*)\otimes K_X(D)$ (see \eqref{e3a}), then it is
called a strongly parabolic Higgs field. A \textit{parabolic Higgs bundle} is a parabolic bundle equipped with a Higgs field.
A \textit{strongly parabolic Higgs bundle} is a parabolic Higgs bundle such that the Higgs field is strongly parabolic.

A parabolic Higgs bundle $(E_*,\, \theta) \,=\, \left(\left(E,\, \{E^j_i\}, \,\{\alpha^j_i\}\right),\, \theta\right)$
is called \textit{stable} (respectively, \textit{semistable}) if for all holomorphic subbundles
$F\, \subsetneq\, E$ of positive rank satisfying the condition that $\theta(F)\, \subset\, F\otimes K_X(D)$
the following inequality holds:
$$
\mu(F_*)\, <\, \mu(E_*)\ \ \left(\text{respectively, }\, \ \mu(F_*) \, \leq\, \mu(E_*)\right).
$$

A parabolic Higgs bundle is called \textit{polystable} if it is a direct sum of stable
parabolic Higgs bundles of same parabolic slope.

A strongly parabolic Higgs bundle is semistable, stable or polystable if it is
semistable, stable or polystable as a parabolic Higgs bundle.

\subsection{Parabolic connections}\label{se2.2}

Take $X$ and $D$ as in Section \ref{se2.1}. Let $V$ be a holomorphic
vector bundle on $X$. A \textit{logarithmic connection} on $V$ singular over
$D$ is a holomorphic differential operator
$$
{\nabla}\, :\, V\, \longrightarrow\, V\otimes K_X(D)
$$
satisfying the Leibniz identity which states that
\begin{equation}\label{e4}
{\nabla}(fs)\,=\, f{\nabla}(s)+ s\otimes df
\end{equation}
for any locally
defined holomorphic function $f$ on $X$ and any locally defined holomorphic section $s$ of $V$ (see
\cite{De}, \cite{At}).

The fiber of $K_X(D)$ over any $y\, \in\, D$ is identified with $\mathbb C$ by the Poincar\'e adjunction
formula \cite[p.~146]{GH}. To explain this isomorphism
\begin{equation}\label{pa}
K_X(D)_y \, \stackrel{\sim}{\longrightarrow}\, {\mathbb C}\, ,
\end{equation}
let $z$ be a holomorphic coordinate function on $X$ defined on an analytic
open neighborhood of $y$ such that $z(y)\,=\, 0$. Then we have an isomorphism
${\mathbb C}\, \longrightarrow\, K_X(D)_y$ that sends any $c\, \in\, \mathbb C$
to $c\cdot \frac{dz}{z}(y)\,\in\, K_X(D)_y$. It is straightforward to check that this map
${\mathbb C}\, \longrightarrow\, K_X(D)_y$ is actually independent of the choice
of the holomorphic coordinate function $z$.

Let ${\nabla}_V\, :\, V\, \longrightarrow\, V\otimes K_X(D)$ be a logarithmic connection
on $V$. From \eqref{e4} it follows that the composition of homomorphisms
$$
V\, \xrightarrow{\,\ {\nabla}_V\,\ }\, V\otimes K_X(D) \, \longrightarrow\,
(V\otimes K_X(D))_y\,\stackrel{\sim}{\longrightarrow}\, V_y
$$
is ${\mathcal O}_X$--linear; the above isomorphism $(V\otimes K_X(D))_y\,
\stackrel{\sim}{\longrightarrow}\, V_y$ is given by the isomorphism in
\eqref{pa}. Therefore, this composition of homomorphisms produces a
$\mathbb C$--linear homomorphism
$$
{\rm Res}({\nabla}_V,\,y)\, :\, V_y\, \longrightarrow\, V_y\, ,
$$
which is called the \textit{residue} of ${\nabla}_V$ at $y$; see \cite{De}.

Take a parabolic vector bundle $E_*\,=\, \left(E,\, \{E^j_i\}, \,\{\alpha^j_i\}\right)$.

\begin{definition}\label{dlc}
A \textit{connection} on $E_*$ is a logarithmic connection ${\nabla}$ on $E$, singular over
$D$, such that $\text{Res}(D,\,x_i)(E^j_i)\, \subset\, E^j_i$ for all $1\,\leq\, j\,\leq\, n_i$,
$1\,\leq\, i\, \leq\, \ell$ (see \eqref{e1}).
\end{definition}

In the literature, it is sometimes also required that the endomorphism of $E^j_i/E^{j+1}_i$ induced by $\text{Res}(D,\,x_i)$ coincides with 
multiplication by the parabolic weight $\alpha^j_i$ for all $1\,\leq\, j\,\leq\, n_i$, $1\,\leq\, i\, \leq\, \ell$ (see 
\cite[Section~2.2]{BL}). Parabolic connections satisfying this additional restriction correspond through the nonabelian Hodge correspondence 
to strongly parabolic Higgs bundles. In this work, we will not assume that this condition is satisfied, but we will later prove that 
pullbacks and direct images of connections satisfying this additional ``residual'' condition also satisfy a corresponding ``residual'' 
condition. The details are in Remark \ref{rmk:pullback} and Remark \ref{rmk:direct}.

\section{Pullback of parabolic bundles}\label{se3}

Take $(X,\, D)$ as before. Let
\begin{equation}\label{e5}
f\, :\, Y\, \longrightarrow\, X
\end{equation}
be a nonconstant holomorphic map from a compact connected Riemann surface. For each
$x_i\, \in\, D$, let
\begin{equation}\label{e6}
f^{-1}(x_i)_{\rm red}\,=\, \{y_{i,1},\, \cdots,\, y_{i,b_i}\}\, \subset\, Y
\end{equation}
be the set-theoretic inverse image. The divisor $\sum_{j=1}^{b_i} y_{i,j}$ on $Y$ will also be denoted 
by $f^{-1}(x_i)_{\rm red}$. Define the subset
\begin{equation}\label{e7}
B\,\,:=\,\, \bigcup_{i=1}^\ell f^{-1}(x_i)_{\rm red}\,\,=\,\, f^{-1}(D)_{\rm red}\, \subset\, Y .
\end{equation}
The divisor $\sum_{i=1}^\ell f^{-1}(x_i)_{\rm red}$ will also be denoted by $B$.

Take a parabolic vector bundle $E_*\,:=\, \left(E,\, \{E^j_i\}, \,\{\alpha^j_i\}\right)$ on
$X$ with parabolic structure over $D$. We will construct a pulled back parabolic vector bundle
$f^*E_*$ on $Y$ with parabolic structure over the divisor $B$ in \eqref{e7}.

We first consider the special case where $\text{rank}(E)\,=\, 1$. So for each $x_i\, \in\, D$ the
parabolic weight of $E_*$ is $\alpha_{i,1}\,=:\, \alpha_i\, \geq\, 0$. For any $1\, \leq\, i\, \leq\, \ell$,
and $1\,\leq\, j\, \leq\, b_i$, let $m_{i,j}\, \geq\, 1$ be the multiplicity of $f$ at the point $y_{i,j}$ 
(see \eqref{e6}).
For any $\lambda\, \in\, \mathbb R$, let $\lfloor{\lambda}\rfloor$ be
the integral part of $\lambda$, so we have $0\, \leq\, \lambda-\lfloor{\lambda}\rfloor \, <\, 1$.

The holomorphic line bundle on $Y$ underlying the parabolic line bundle $f^*E_*$ is
\begin{equation}
\label{eq:parpullback}
F\, :=\, (f^*E)\otimes{\mathcal O}_Y(\sum_{i=1}^\ell\sum_{j=1}^{b_i} \lfloor{m_{i,j}\alpha_i}\rfloor
\cdot y_{i,j})\, ,
\end{equation}
and the parabolic weight of $F_{y_{_{i,j}}}$ is $m_{i,j}\alpha_i -
\lfloor{m_{i,j}\alpha_i}\rfloor$. From this definition it follows immediately that
\begin{equation}\label{d1}
\text{par-deg}(f^*E_*)\,=\, \text{degree}(f)\cdot \text{par-deg}(E_*)\, .
\end{equation}

Note that in the above construction of $f^*E_*$ we did not use that $X$ is compact.
More precisely, the above construction remains valid if $X \,=\, \overline{X}\setminus S$,
where $\overline{X}$ is a compact connected Riemann surface and $S\, \subset\, \overline{X}$
is a finite subset.

Now let $E_*$ be a parabolic vector bundle on $X$ of rank $r\,\geq\, 2$.
Any parabolic vector bundle $E_*$ can locally be expressed as a direct sum of parabolic
line bundles. In other words, $X$ can be covered by Zariski open subsets $U_1,\,
\cdots,\, U_m$ such that $E_*\big\vert_{U_j}$ is a direct sum of parabolic line bundles
on $U_j$ for all $1\, \leq\, j\, \leq\, m$. Let
\begin{equation}\label{d2}
E_*\big\vert_{U_j}\,=\, \bigoplus_{k=1}^r L(j,k)_*
\end{equation}
be the decomposition of
$E_*\big\vert_{U_j}$ into a direct sum of parabolic line bundles. Let
$$
f_j\,:=\, f\big\vert_{f^{-1}(U_j)} \, :\, f^{-1}(U_j)\, \longrightarrow\, U_j
$$
be the restriction of $f$ to $f^{-1}(U_j)$. Now define
\begin{equation}
\label{eq:parpullback2}
f^*_j(E_*\big\vert_{U_j})\,=\, f^*_j\left(\bigoplus_{k=1}^r L(j,k)_*\right)
\,:=\, \bigoplus_{k=1}^r f^*_j L(j,k)_*
\end{equation}
to be the direct sum of pull backs of parabolic line bundles (see \eqref{d2}). Consequently,
we get a parabolic vector bundle $f^*_j(E_*\big\vert_{U_j})$ over $f^{-1}(U_j)$
with parabolic structure over $B\bigcap f^{-1}(U_j)$ (see \eqref{e7}).

$1\, \leq\, j,\, j'\, \leq\, m$, the two parabolic vector bundles
$f^*_j(E_*\big\vert_{U_j})$ and $f^*_{j'}(E_*\big\vert_{U_{j'}})$ are canonically
identified over $f^{-1}(U_j\bigcap U_{j'})$. This identification is obtained
from the identity map of $(f^*E)\big\vert_{U_j\cap U_{j'}}$. Therefore, these
parabolic vector bundles $f^*_j(E_*\big\vert_{U_j})$, $1\, \leq\, j\, \leq\, m$, patch together
compatibly to define a parabolic vector
bundle $f^*E_*$ on $Y$ with parabolic structure over $B$.

\begin{lemma}\label{lemn1}
Let $Z$ be a compact connected Riemann surface and $\psi\, :\, Z\, \longrightarrow\, Y$
a surjective holomorphic map. Then for any parabolic vector bundle $E_*$ on $X$,
$$
(f\circ\psi)^*E_*\,=\, \psi^*(f^*E_*)_*\, .
$$
\end{lemma}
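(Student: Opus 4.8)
The plan is to reduce the statement to the case of parabolic line bundles, where it becomes an explicit computation with the formula \eqref{eq:parpullback}. First, observe that since $\psi$ is surjective it is nonconstant, so $f\circ\psi$ is a nonconstant map and $(f\circ\psi)^*E_*$ is defined; moreover $(f\circ\psi)^{-1}(D)_{\mathrm{red}} = \psi^{-1}(B)_{\mathrm{red}}$, so both sides carry a parabolic structure over the same divisor on $Z$. Recall that both pullbacks are assembled from a single local decomposition of $E_*$: cover $X$ by Zariski open sets $U_1,\ldots,U_m$ with $E_*|_{U_j} = \bigoplus_{k=1}^r L(j,k)_*$ as in \eqref{d2}. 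On $f^{-1}(U_j)$ one has $f^*E_*|_{f^{-1}(U_j)} = \bigoplus_k f_j^* L(j,k)_*$, a direct sum of parabolic line bundles, and since the pullback of a parabolic line bundle is given by the global formula \eqref{eq:parpullback} (valid on any open subset, as noted right after that formula), no further refinement is needed to compute $\psi^*$. Hence on $\psi^{-1}(f^{-1}(U_j)) = (f\circ\psi)^{-1}(U_j)$,
$$\psi^*(f^*E_*)|_{(f\circ\psi)^{-1}(U_j)} = \bigoplus_k \psi^*(f_j^*L(j,k)_*) \quad\text{and}\quad (f\circ\psi)^*E_*|_{(f\circ\psi)^{-1}(U_j)} = \bigoplus_k (f\circ\psi)^*L(j,k)_*.$$
As the identifications on overlaps are in both cases induced by the identity map of the common underlying bundle $(f\circ\psi)^*E = \psi^*(f^*E)$, it suffices to prove $\psi^*(f_j^*L(j,k)_*) = (f\circ\psi)^*L(j,k)_*$ for each parabolic line bundle.

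Second, I carry out the line bundle computation. Let $L_*$ be a parabolic line bundle on $X$ with weight $\alpha_i$ at $x_i$. Fix $z\in Z$ with $\psi(z)=y_{i,j}$ and $f(y_{i,j})=x_i$, and write $n$ for the multiplicity of $\psi$ at $z$ and $m := m_{i,j}$ for the multiplicity of $f$ at $y_{i,j}$. Three facts are needed: multiplicativity of multiplicities, so that the multiplicity of $f\circ\psi$ at $z$ equals $nm$; compatibility of the divisor pullback, $\psi^*\mathcal{O}_Y(y_{i,j}) = \mathcal{O}_Z(\psi^*y_{i,j})$ with $\psi^*y_{i,j} = \sum_{z'\mapsto y_{i,j}} (\mathrm{mult}_{z'}\psi)\cdot z'$; and the elementary floor identities, for an integer $n\geq 1$ and a real number $a$,
$$\lfloor na\rfloor = n\lfloor a\rfloor + \lfloor n(a-\lfloor a\rfloor)\rfloor \quad\text{and}\quad na - \lfloor na\rfloor = n(a-\lfloor a\rfloor) - \lfloor n(a-\lfloor a\rfloor)\rfloor.$$
Applying these with $a = m\alpha_i$, the weight of $\psi^*(f^*L_*)$ at $z$ is $n\beta - \lfloor n\beta\rfloor$ where $\beta = m\alpha_i - \lfloor m\alpha_i\rfloor$, which by the second identity equals $nm\alpha_i - \lfloor nm\alpha_i\rfloor$, the weight of $(f\circ\psi)^*L_*$ at $z$. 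For the underlying line bundles, pulling back the twisting divisor of $f^*L_*$ by $\psi$ and adding the twist introduced by $\psi^*$ contributes the coefficient $n\lfloor m\alpha_i\rfloor + \lfloor n\beta\rfloor$ at $z$, which by the first identity equals $\lfloor nm\alpha_i\rfloor$, matching the twist of $(f\circ\psi)^*L_*$; since $\psi^*(f^*L) = (f\circ\psi)^*L$, the underlying line bundles agree. This settles the line bundle case.

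The computations are elementary, so I do not expect a serious obstacle. The point requiring the most care is the reduction in the first paragraph: one must confirm that computing $\psi^*$ of $f^*E_*$ through the decomposition $\bigoplus_k f_j^*L(j,k)_*$ is legitimate, i.e. that this is a valid local parabolic-line-bundle decomposition of the kind used in the definition of $\psi^*$, and that the resulting parabolic structures (the filtrations together with the weights, not merely the underlying bundle) glue consistently on overlaps. Once it is observed that both pullbacks are built from the same datum $\{U_j,\,L(j,k)_*\}$ and that the gluing is dictated by the identity on the underlying bundle, this is immediate, and the global identity $(f\circ\psi)^*E_* = \psi^*(f^*E_*)_*$ follows.
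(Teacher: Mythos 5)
Your proposal is correct and follows exactly the route the paper intends: the paper's own proof is just the assertion that the lemma is ``a straightforward consequence of the construction of the pullback,'' and your argument is precisely the verification of that claim, reducing to parabolic line bundles via the local decompositions \eqref{d2} used in the construction and checking the twist coefficients and weights with the floor identity $\lfloor nm\alpha\rfloor = n\lfloor m\alpha\rfloor + \lfloor n(m\alpha-\lfloor m\alpha\rfloor)\rfloor$ together with multiplicativity of ramification indices. No gaps; your write-up simply makes explicit what the paper leaves to the reader.
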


\begin{proof}
This is a straightforward consequence of the construction of the pullback of a parabolic bundle.
\end{proof}

\begin{lemma}\label{lemn2}\mbox{}
\begin{enumerate}
\item The equality ${\rm{par\text{-}deg}}(f^*E_*)\,=\, {\rm degree}(f)\cdot {\rm{par\text{-}deg}}(E_*)$ holds.

\item The parabolic bundle $E_*$ is parabolic semistable if and only if $f^*E_*$ is parabolic semistable.
\end{enumerate}
\end{lemma}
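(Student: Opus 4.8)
For part (1), the rank-one case is exactly equation \eqref{d1}, so the task is to bootstrap from line bundles to arbitrary rank. The plan is to pass to parabolic determinants. Writing $\det E_*$ for the parabolic line bundle whose underlying line bundle is $\det E$ and whose weight at $x_i$ is $\sum_j \alpha^j_i\dim(E^j_i/E^{j+1}_i)$ reduced modulo $1$ (with the integer part absorbed into a twist by $\mathcal{O}_X(x_i)$), one checks directly from the definition of parabolic degree that $\text{par-deg}(E_*)=\text{par-deg}(\det E_*)$. I would then show that pullback commutes with the parabolic determinant, $f^*(\det E_*)\cong \det(f^*E_*)$, which is a local statement: on each $U_j$ where $E_*=\bigoplus_k L(j,k)_*$ the parabolic determinant is $\bigotimes_k L(j,k)_*$, and by \eqref{eq:parpullback2} both the pullback of the determinant and the determinant of the pullback reduce to the claim that $f^*$ commutes with tensor products of parabolic line bundles. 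The latter is the bookkeeping identity that, germ by germ at $y_{i,j}$, the operation sending a parabolic line bundle to $(\text{twist degree})+(\text{weight})\in\mathbb{R}$ is additive under tensor product and is multiplied by the ramification index $m_{i,j}$ under $f^*$; since $m_{i,j}(\alpha+\beta)=m_{i,j}\alpha+m_{i,j}\beta$, the floor-function reductions on both sides agree. Granting this, part (1) follows by combining the two identities with \eqref{d1} applied to the line bundle $\det E_*$.

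For part (2), the easy implication is that semistability of $f^*E_*$ forces semistability of $E_*$. If $E_*$ admitted a parabolic subbundle $F_*\subsetneq E_*$ with $\mu(F_*)>\mu(E_*)$, then functoriality of the pullback gives a parabolic subbundle $f^*F_*\subseteq f^*E_*$, and part (1) yields $\mu(f^*F_*)=\text{degree}(f)\cdot\mu(F_*)>\text{degree}(f)\cdot\mu(E_*)=\mu(f^*E_*)$; passing if necessary to the saturation of $f^*F$ with its induced structure (which only increases the slope) contradicts semistability of $f^*E_*$.

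For the reverse implication, I would reduce to the Galois case. Choose a Galois closure, i.e. a compact connected Riemann surface $W$ with a surjective holomorphic map $\psi\colon W\to Y$ such that $h:=f\circ\psi\colon W\to X$ is Galois with group $G$. Because $h\circ\sigma=h$ for every $\sigma\in G$, Lemma \ref{lemn1} supplies canonical isomorphisms $\sigma^*(h^*E_*)\cong h^*E_*$, so $h^*E_*$ carries a $G$-equivariant structure. If $h^*E_*$ were not semistable, its maximal destabilizing parabolic subbundle $N_*$, the first step of the parabolic Harder--Narasimhan filtration, would be unique, hence $G$-invariant. A $G$-invariant parabolic subbundle descends to a parabolic subbundle $M_*\subsetneq E_*$ on $X=W/G$ with $h^*M_*\cong N_*$; since $\mu(N_*)>\mu(h^*E_*)=\text{degree}(h)\cdot\mu(E_*)$ while $\mu(N_*)=\mu(h^*M_*)=\text{degree}(h)\cdot\mu(M_*)$ by part (1), we get $\mu(M_*)>\mu(E_*)$, contradicting semistability of $E_*$. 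Thus $h^*E_*$ is semistable. Finally, if $f^*E_*$ were not semistable, then the contrapositive of the easy implication applied to $\psi$ would force $\psi^*(f^*E_*)=(f\circ\psi)^*E_*=h^*E_*$ (Lemma \ref{lemn1}) to be non-semistable, a contradiction.

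The main obstacle is the descent step: turning the $G$-invariant maximal destabilizing subbundle of $h^*E_*$ into a genuine parabolic subbundle of $E_*$ with the correct parabolic weights and slope. This requires that the explicit pullback of \eqref{eq:parpullback}--\eqref{eq:parpullback2} be compatible with $G$-equivariant descent through a ramified cover, so that the weight transformation encoded by the floors $\lfloor m_{i,j}\alpha_i\rfloor$ is exactly inverted downstairs; establishing this compatibility, rather than the slope comparison (which is immediate from part (1)), is the technically delicate point.
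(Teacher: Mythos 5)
Your proposal is correct and follows essentially the same route as the paper: part (1) is reduced to the line-bundle formula \eqref{d1} (you route this through the parabolic determinant, while the paper invokes the local direct-sum construction directly), and part (2) combines the easy implication with a reduction, via Lemma \ref{lemn1} and part (1), to a Galois cover on which the maximal destabilizing (first Harder--Narasimhan) parabolic subbundle is Galois-invariant and hence descends. The descent compatibility you flag as the delicate point is asserted at exactly the same level of detail in the paper's own proof, so your argument has no gap relative to it.
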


\begin{proof}
Statement (1) follows immediately from the construction of $f^*E_*$ and \eqref{d1}.

To prove (2), first note that if $E_*$ is not parabolic semistable, then $f^*E_*$ is not parabolic
semistable. Indeed, if $F_*\, \subset\, E_*$ contradicts the semistability condition for $E_*$, then
using Statement (1) it follows that $f^*F_*$ contradicts the semistability condition for $f^*E_*$.

To prove the converse, in view of Lemma \ref{lemn1} and Statement (1) it suffices to consider the case where $f$ is
a (ramified) Galois covering. In that case, if $f^*E_*$ is not semistable, then the first nonzero term of the
Harder-Narasimhan filtration of $f^* E_*$ (also called that maximal semistable subbundle of $f^*E_*$) is preserved
under the action, on $f^*E_*$, of the Galois group $\text{Gal}(f)$ for $f$, and hence it
is of the form $f^*F_*$, where $F_*$ is a subbundle $F\, \subset\, E$
equipped with the parabolic structure induced by $E_*$. Now from Statement (1) it follows that
$F_*$ contradicts the semistability condition for $E_*$.
\end{proof}

\section{Direct image of parabolic bundles}\label{se4}

Let $Z$ be a compact connected Riemann surface and
\begin{equation}\label{ep}
\phi\,:\, X\, \longrightarrow\, Z
\end{equation}
a nonconstant holomorphic map.
Let
\begin{equation}\label{e8}
R\, \subset\, X
\end{equation}
be the ramification locus of $\phi$. To clarify, we do not assume that $R$ and $D$ are disjoint.
For any point $x\,\in\, X$, let $m_x\, \geq\, 1$ be the multiplicity of $\phi$ at $x$,
so $m_x\, \geq\, 2$ if and only if $x\, \in\, R$. Define the finite subset
\begin{equation}\label{e9}
\Delta\,=\, \phi (R\cup D) \, \subset\, Z.
\end{equation}

Take a parabolic vector bundle $E_*\,:=\, \left(E,\, \{E^j_i\}, \,\{\alpha^j_i\}\right)$ on
$X$ with parabolic structure on $D$. We will construct a parabolic structure on the direct image
$\phi_*E$ whose parabolic divisor is $\Delta$ in \eqref{e9}.

As in Section \ref{se3}, we first assume that ${\rm rank}(E)\,=\, 1$.
For each $x_i\, \in\, D$ the parabolic weight of $E_*$ is $\alpha_{i,1}\,=:\, \alpha_{x_i}$. We will
extend the parabolic points of $E_*$ in such that it does not change $E_*$.
For any $x\, \in\, R\setminus (R\cap D)$, we equip $E_x$ with the trivial parabolic structure,
meaning $E_x$ is given the parabolic weight $\alpha_x\,=\, 0$. This does not alter the parabolic structure of $E_*$,
but it helps in describing the parabolic structure of $\phi_*E$.

We now recall a general property of a direct image. Take a holomorphic vector bundle $F$ on $X$, and
consider the direct image $\phi_*F$. For any point $y\, \in\, Z$, the fiber $(\phi_* F)_y$
of $\phi_* F$ over $y$ has a canonical decomposition
\begin{equation}\label{e10}
(\phi_* F)_y\,=\, \bigoplus_{x\,\in \,\phi^{-1}(y)} V_x\, .
\end{equation}
We will describe the subspaces $V_x$ of $(\phi_* F)_y$. Take an analytic simply connected open neighborhood
$x\, \in\, U\, \subset\, X$ of $x$ such that
\begin{itemize}
\item $U\bigcap \phi^{-1}(y)\,=\, \{x\}$,

\item $U\bigcap R\, \subset\, \{x\}$, and

\item $\# \phi^{-1}(y')\bigcap U\,=\, m_x$ (recall that 
$m_x\, \geq\, 1$ is the multiplicity of $\phi$ at $x$) for all $y'\, \in\, \phi(U)\setminus \{y\}$.
\end{itemize}
Let
\begin{equation}\label{e12}
\psi\, :=\, \phi\big\vert_U \, :\, U \, \longrightarrow\, \phi(U)
\end{equation}
be the restriction of $\phi$ to $U$. The fiber $(\psi_* (F\big\vert_U))_y$ is evidently a subspace of $(\phi_* F)_y$. This
subspace will be denoted by $V_x$. Now we have the decomposition in \eqref{e10}.

Consider the decomposition
\begin{equation}\label{e11}
(\phi_* E)_y\,=\, \bigoplus_{x\,\in\, \phi^{-1}(y)} V_x
\end{equation}
in \eqref{e10} for the vector bundle $E$. The parabolic structure on $(\phi_* E)_y$ will be described by
giving a parabolic structure on each $V_x$ and then taking their direct sum.

To give a parabolic structure on $V_x$, first note that for any $j\, \geq\, 0$, there is a natural injective homomorphism
of coherent analytic sheaves
$$
\psi_* (E\otimes {\mathcal O}_U(-j x)) \, \hookrightarrow\, \psi_* E\, ,
$$
where $\psi$ is the map in \eqref{e12}. The image of $(\psi_* (E\otimes {\mathcal O}_U(-j x)))_y$ in
$(\phi_* E)_y$ by this homomorphism will be denoted by $(\psi_* (E\otimes {\mathcal O}_U(-j x)))'_y$. 
We have a filtration of subspaces of $V_x$:
\begin{equation}\label{e13}
V_x\,:=\, (\psi_* E)_y \, \supset \, (\psi_* (E\otimes {\mathcal O}_U(-x)))'_y
\, \supset \, (\psi_* (E\otimes {\mathcal O}_U(-2x)))'_y
\end{equation}
$$
\supset\, \cdots\, \supset\,
(\psi_* (E\otimes {\mathcal O}_U(-(m_x-1)x)))'_y\, \supset\, (\psi_* (E\otimes {\mathcal O}_U(-m_x x)))'_y\,=\,0,
$$
where $m_x\, \geq\, 1$ is the multiplicity of $\phi$ at $x$; note that $\psi_* (E\otimes {\mathcal O}_U(-m_x x))\,=\,
(\psi_* E)\otimes {\mathcal O}_{\psi (U)}(-y)$ (projection formula), and hence
$(\psi_* (E\otimes {\mathcal O}_U(-m_x x)))'_y\,=\,0$. The parabolic weight of the above subspace
$$
(\psi_* (E\otimes {\mathcal O}_U(-kx)))'_y\, \subset\, V_x
$$
in \eqref{e13} is $\frac{k+\alpha_x}{m_x}$.

Therefore, we have a parabolic structure on the vector space $V_x$. Now taking the direct sum of these parabolic structures
we get a parabolic structure on $E_y$ using \eqref{e11}. Note that the parabolic weight of $(\psi_* (E\otimes {\mathcal O}_U(-kx)))'_y$
is nonzero even if $\alpha_x\,=\, 0$.

Next let $E_*$ be a parabolic vector bundle on $X$ of rank $r\,\geq\, 2$. As in Section \ref{se3} we can reduce this to
the case of line bundles by decomposing $E_*$ into a direct sum of parabolic line bundles over suitable open subsets of $X$.

Choose connected open subsets ${\mathbb U}_j\, \subset\, X$, $1\,\leq\, j\, \leq\, m$, such that
\begin{enumerate}
\item $\bigcup_{j=1}^m {\mathbb U}_j\, =\, X$,

\item $\phi^{-1}(\phi({\mathbb U}_j))\,=\, {\mathbb U}_j$ for every $1\, \leq\, j\, \leq\, m$, and

\item we are given a decomposition of $E_*\big\vert_{{\mathbb U}_j}$ into a direct sum of parabolic line bundles
\begin{equation}\label{e14}
E_*\big\vert_{{\mathbb U}_j}\,=\, \bigoplus_{k=1}^r L(j,k)_*\, .
\end{equation}
\end{enumerate}
The holomorphic line bundle on ${\mathbb U}_j$ underlying $L(j,k)_*$ will be denoted $L(j,k)$.

Let
$$
\phi^j\,:=\, \phi\big\vert_{{\mathbb U}_j} \, :\, {\mathbb U}_j\, \longrightarrow\, \phi({\mathbb U}_j)
$$
be the restriction of $\phi$ to ${\mathbb U}_j$. Using \eqref{e14} we have
\begin{equation}\label{e15}
(\phi_*E)\big\vert_{\phi({\mathbb U}_j)}\,\,=\,\, \bigoplus_{k=1}^r \phi^j_* L(j,k)
\end{equation}
for every $1\, \leq\, j\, \leq\, m$.

We already constructed a parabolic structure on $\phi^j_* L(j,k)$ using the
parabolic structure of $L(j,k)_*$. Therefore, using \eqref{e15} we get
a parabolic structure on $(\phi_*E)\big\vert_{\phi({\mathbb U}_j)}$. 
For every $1\, \leq\, j,\, j'\, \leq\, m$, the parabolic structures on
$(\phi_*E)\big\vert_{\phi({\mathbb U}_j)}$ and $(\phi_*E)\big\vert_{\phi({\mathbb U}_{j'})}$
coincide over $\phi({\mathbb U}_j\bigcap {\mathbb U}_{j'})$. Therefore, we obtain a parabolic
structure on $\phi_*E$.

\begin{lemma}\label{lem-n}
The parabolic degree of $\phi_*E$ equipped with the above parabolic structure coincides with the
parabolic degree of $E_*$.
\end{lemma}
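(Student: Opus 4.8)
The plan is to compute the two sides separately and observe that the ramification contributions cancel. Write $n\,=\,\deg(\phi)$ and $r\,=\,\operatorname{rank}(E)$, and recall that because $\phi$ is finite there are no higher direct images, so $\chi(Z,\,\phi_*E)\,=\,\chi(X,\,E)$.

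First I would pin down $\deg(\phi_*E)$. Since $\operatorname{rank}(\phi_*E)\,=\,rn$, applying Riemann--Roch on $X$ and on $Z$ to the equality $\chi(X,\,E)\,=\,\chi(Z,\,\phi_*E)$ gives
$$\deg(E)+r(1-g_X)\,=\,\deg(\phi_*E)+rn(1-g_Z).$$
Substituting the Riemann--Hurwitz relation $1-g_X\,=\,n(1-g_Z)-\frac12\sum_{x\in R}(m_x-1)$ makes the genus terms cancel and yields
$$\deg(\phi_*E)\,=\,\deg(E)-\frac{r}{2}\sum_{x\in R}(m_x-1).$$

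Next I would compute the total parabolic weight of $\phi_*E_*$, namely the sum of $\alpha\cdot\dim(\mathrm{gr})$ over all parabolic points of $Z$. This is a fibrewise computation, so at each $y$ I can use the decomposition \eqref{e11} into the subspaces $V_x$ and, locally around each $x$, the decomposition of $E_*$ into parabolic line bundles. For a line-bundle summand with weight $\beta$ at $x$, the filtration \eqref{e13} has $m_x$ one-dimensional graded quotients carrying the weights $\frac{t+\beta}{m_x}$ for $t\,=\,0,\,\ldots,\,m_x-1$, whose sum is $\frac{m_x-1}{2}+\beta$. Adding over the $r$ summands, the weight contributed by $V_x$ equals $\frac{r(m_x-1)}{2}+W_x$, where $W_x$ denotes the total parabolic weight of $E_*$ at $x$ (so $W_x\,=\,\sum_j\alpha^j_i\dim(E^j_i/E^{j+1}_i)$ when $x\,=\,x_i\in D$, and $W_x\,=\,0$ otherwise, the points with $m_x\,=\,1$ contributing nothing to the first term). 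Summing over all $x$, the total parabolic weight of $\phi_*E_*$ is $\frac{r}{2}\sum_{x\in R}(m_x-1)+\sum_x W_x$.

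Adding $\deg(\phi_*E)$ to this total weight, the two terms $\pm\frac{r}{2}\sum_{x\in R}(m_x-1)$ cancel and leave $\deg(E)+\sum_x W_x\,=\,\text{par-deg}(E_*)$, which is the claim. The step I expect to require the most care is the weight computation on $V_x$: one must verify that the filtration \eqref{e13} really produces the stated weights $\frac{t+\beta}{m_x}$ with one-dimensional quotients, and that reducing to the local line-bundle decomposition is legitimate, which holds because the total parabolic weight is additive over direct sums and the direct-image construction respects \eqref{e11}. The conceptual content is that the degree defect $-\frac{r}{2}\sum_{x\in R}(m_x-1)$ created by pushing forward is exactly balanced by the new fractional weights $\frac{t}{m_x}$ manufactured at the ramification points.
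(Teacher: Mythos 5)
Your proof is correct, but it takes a genuinely different route from the paper's. The paper disposes of the rank-one case by direct inspection of the construction and then reduces the general case to rank one via parabolic determinants: writing $\det V_*=\bigwedge^r V_*$ for the parabolic wedge product, it combines $\text{par-deg}(E_*)=\text{par-deg}(\det E_*)$, the rank-one case applied to $\det E_*$, and the compatibility $\text{par-deg}(\phi_*\det E_*)=\text{par-deg}(\det \phi_*E_*)$. You instead compute both sides globally in arbitrary rank: the degree defect $\deg(\phi_*E)=\deg(E)-\frac{r}{2}\sum_{x\in R}(m_x-1)$ from $\chi(X,E)=\chi(Z,\phi_*E)$ together with Riemann--Hurwitz, and the total parabolic weight $\frac{r}{2}\sum_{x\in R}(m_x-1)+\sum_x W_x$ from the filtration \eqref{e13}, so that the ramification contributions cancel. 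Your fibrewise weight computation is right: for a line-bundle summand the graded pieces of \eqref{e13} are one-dimensional with weights $\frac{t+\beta}{m_x}$, $t=0,\ldots,m_x-1$, summing to $\frac{m_x-1}{2}+\beta$, and additivity over the local decomposition \eqref{e11} is exactly how the parabolic structure on $\phi_*E$ was defined; this is essentially the content the paper's rank-one step leaves implicit. What your approach buys is self-containedness and transparency: it avoids the parabolic wedge-product machinery of \cite{Yo}, \cite{Bi1} and, more importantly, the identity $\text{par-deg}(\phi_*\det E_*)=\text{par-deg}(\det \phi_*E_*)$, which is the real crux of the paper's reduction and is itself not proved there. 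What the paper's approach buys is brevity and the absence of any Riemann--Roch or Riemann--Hurwitz computation in higher rank once those determinant compatibilities are granted.
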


\begin{proof}
When $E_*$ is of rank one, this follows directly from the construction of the parabolic structure on $\phi_*E$.

The general case of higher ranks will be reduced to the case of rank one. For a parabolic vector bundle
$V_*$ of rank $r$, let $\det V_*\,=\, \bigwedge^r V_*$ be the parabolic wedge product; see \cite{Yo}, \cite{Bi1}
for parabolic tensor product. We have $\text{par-deg}(E_*)\,=\, \text{par-deg}(\det E_*)$. So we have
$\text{par-deg}(E_*)\,=\, \text{par-deg}(\phi_*\det E_*)$. On the other hand,
$\text{par-deg}(\phi_*\det E_*)\,=\, \text{par-deg}(\det \phi_* E_*)$.
\end{proof}

\begin{proposition}\label{propn1}\mbox{}
\begin{enumerate}
\item For any compact Riemann surface $M$ and any nonconstant holomorphic map
$\varphi\, :\, Z\, \longrightarrow\, M$,
$$
(\varphi\circ\phi)_*E_*\,=\, \varphi_*(\phi_* E_*)_*\, .
$$

\item If $\phi$ is a (ramified) Galois morphism, then the parabolic vector bundle
$\phi^*(\phi_*E_*)_*$ is isomorphic to the direct sum
$$
\bigoplus_{\gamma\in {\rm Gal}(\phi)} \gamma^*E_*
$$
of parabolic bundles.
\end{enumerate}
\end{proposition}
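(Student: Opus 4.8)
The plan is to prove both parts by reducing to the rank-one case and to a computation local on the target, since the parabolic direct image was \emph{defined} precisely by decomposing $E_*$ into parabolic line bundles over suitable open sets, and the underlying statements for plain vector bundles are standard: $(\varphi\circ\phi)_*E\,\cong\,\varphi_*(\phi_*E)$ is functoriality of the direct image, while for a Galois map $\phi$ one has $\phi^*\phi_*E\,\cong\,\bigoplus_{\gamma\in\mathrm{Gal}(\phi)}\gamma^*E$ for the underlying bundles. Thus the whole content of the proposition lies in matching parabolic filtrations and weights, and for this it suffices to treat a parabolic line bundle and to work in an analytic neighborhood of a single point of the target curve.

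For part (1), fix $w\in M$ and $x\in X$ with $(\varphi\circ\phi)(x)=w$, and write $m:=m_x$ for the multiplicity of $\phi$ at $x$ and $n$ for the multiplicity of $\varphi$ at $\phi(x)$. The key input is the multiplicativity of multiplicities, which gives $m_x^{\varphi\circ\phi}=mn$. By the construction in \eqref{e13}, the subspace $V_x\subset(\phi_*E)_{\phi(x)}$ carries the weights $\tfrac{k+\alpha_x}{m}$ for $0\le k<m$; applying the direct-image construction once more to $\varphi$ at the weight $\beta=\tfrac{k+\alpha_x}{m}$ produces at $w$ the weights
$$\frac{k''+\beta}{n}\;=\;\frac{(k''m+k)+\alpha_x}{mn},\qquad 0\le k''<n.$$
As $(k,k'')$ runs over $\{0,\dots,m-1\}\times\{0,\dots,n-1\}$, the integer $k''m+k$ runs bijectively over $\{0,\dots,mn-1\}$, so the resulting weights are exactly $\tfrac{\ell+\alpha_x}{mn}$ with $0\le\ell<mn$, which is precisely the list produced by the one-step construction for $\varphi\circ\phi$. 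It then remains to identify the two filtrations subspace by subspace; I would do this with the projection formula, using in particular that $\phi_*(E\otimes{\mathcal O}(-m x))\,\cong\,(\phi_*E)\otimes{\mathcal O}(-\phi(x))$ so that the order-of-vanishing filtrations defined by ${\mathcal O}(-\ell x)$ are compatible under iterated pushforward. This subspace matching is the only laborious point of part (1), but it is a routine local computation.

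For part (2), let $z\in\Delta$ and $\phi^{-1}(z)=\{x_1,\dots,x_s\}$, and use that $\phi$ Galois forces a common multiplicity $m$ over $z$, with $\mathrm{Gal}(\phi)$ acting transitively on $\phi^{-1}(z)$ and each point-stabilizer of order $m$, so that $|\mathrm{Gal}(\phi)|=sm=\text{degree}(\phi)$. By the rank-one direct-image formula, $\phi_*E_*$ acquires near $z$ the weights $\tfrac{k+\alpha_{x_a}}{m}$, $0\le k<m$; pulling back by $\phi$, whose multiplicity at any $x'\in\phi^{-1}(z)$ is $m$, the rule \eqref{eq:parpullback} sends a weight $\beta=\tfrac{k+\alpha_{x_a}}{m}$ to $m\beta-\lfloor m\beta\rfloor=\alpha_{x_a}$ (because $0\le\alpha_{x_a}<1$) while twisting the underlying line by ${\mathcal O}(k\,x')$. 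Hence at $x'$ the parabolic pullback $\phi^*(\phi_*E_*)_*$ contributes, for each $a$ and each $k$, a weight $\alpha_{x_a}$, i.e.\ each $\alpha_{x_a}$ with multiplicity $m$. On the other side, $\gamma^*E_*$ has weight $\alpha_{\gamma(x')}$ at $x'$, and as $\gamma$ ranges over $\mathrm{Gal}(\phi)$ each $x_a$ is hit exactly $m$ times; so the two weight multisets agree at every point of $B$.

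The remaining and genuinely delicate point of part (2) is to match the \emph{underlying} holomorphic bundles: the parabolic pullback introduces the twists ${\mathcal O}(k\,x')$, whereas $\bigoplus_{\gamma}\gamma^*E$ carries no such twist even though $\phi^*\phi_*E\cong\bigoplus_{\gamma}\gamma^*E$ holds for the plain bundles. I expect this to be the main obstacle. My plan is to make the isomorphism $\phi^*\phi_*E\cong\bigoplus_{\gamma}\gamma^*E$ explicit from the description of the fiber product $X\times_Z X$, which for a Galois $\phi$ is the union of the graphs of the elements of $\mathrm{Gal}(\phi)$, and then to verify, in a local uniformizer at each $x'$, that the order-of-vanishing filtration defining the parabolic direct image transports under this isomorphism exactly to the integer shifts recorded by the twists ${\mathcal O}(k\,x')$, so that after incorporating the weights the two parabolic bundles coincide. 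As a consistency check constraining the computation, both sides have the same parabolic degree: $\text{par-deg}(\phi^*(\phi_*E_*))=\text{degree}(\phi)\cdot\text{par-deg}(E_*)$ by \eqref{d1} and Lemma \ref{lem-n}, while $\text{par-deg}\big(\bigoplus_{\gamma}\gamma^*E_*\big)=|\mathrm{Gal}(\phi)|\cdot\text{par-deg}(E_*)=\text{degree}(\phi)\cdot\text{par-deg}(E_*)$, since each $\gamma$ is an automorphism of degree one.
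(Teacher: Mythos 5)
Your treatment of part (1) is essentially correct and is exactly the kind of verification the paper leaves implicit (its proof of Proposition \ref{propn1} consists of declaring both statements straightforward and reducing (2) to parabolic line bundles, the same reduction you use): multiplicativity of multiplicities, the bijection $(k,k'')\mapsto k''m+k$ between $\{0,\dots,m-1\}\times\{0,\dots,n-1\}$ and $\{0,\dots,mn-1\}$, and the compatibility of the order-of-vanishing filtrations under the projection formula do carry out the weight and subspace matching in a local coordinate where $\phi$ is $t\mapsto t^m$ and $\varphi$ is $s\mapsto s^n$.

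Part (2), however, is built on a false assertion, stated twice: that for a (ramified) Galois $\phi$ one has $\phi^*\phi_*E\cong\bigoplus_{\gamma\in\mathrm{Gal}(\phi)}\gamma^*E$ for the underlying \emph{plain} bundles. This holds only for \'etale covers. For ramified $\phi$ it fails: take $\phi\colon\mathbb{P}^1\to\mathbb{P}^1$, $z\mapsto z^2$, and $E=\mathcal{O}$; then $\phi_*\mathcal{O}=\mathcal{O}\oplus\mathcal{O}(-1)$, so $\phi^*\phi_*\mathcal{O}=\mathcal{O}\oplus\mathcal{O}(-2)$ has degree $-2$, while $\bigoplus_\gamma\gamma^*\mathcal{O}=\mathcal{O}\oplus\mathcal{O}$ has degree $0$. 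The natural map $\phi^*\phi_*E\to\bigoplus_\gamma\gamma^*E$, obtained by applying each $\gamma^*$ to the counit $\phi^*\phi_*E\to E$, is only injective with torsion cokernel supported on the ramification divisor; the entire content of statement (2) in the ramified case is that the twists $\mathcal{O}(kx')$ built into the parabolic pullback fill in exactly this cokernel. Indeed your own consistency check exposes the contradiction: if the plain isomorphism held, the underlying bundle of $\phi^*(\phi_*E_*)_*$ (which is $\phi^*\phi_*E$ twisted by an effective divisor) would have strictly larger degree than $\bigoplus_\gamma\gamma^*E$ whenever $\phi$ ramifies, while the weight multisets agree, so the parabolic degrees could not be equal. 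The repair is close to what you sketch but must start from the non-isomorphism: in a coordinate $t$ at $x'$ with $s=t^m$, the block of the natural map corresponding to $x_a\in\phi^{-1}(z)$, written in the basis $\{t_a^k\}_{k=0}^{m-1}$ of $\psi_{a*}E$ and the $m$ copies $\{\gamma^*E\}_{\gamma(x')=x_a}$, is $V\cdot\mathrm{diag}(1,t,\dots,t^{m-1})$ with $V$ an invertible Vandermonde matrix in the $m$-th roots of unity; twisting the $k$-th summand by $\mathcal{O}(kx')$, which is precisely the rule \eqref{eq:parpullback} applied to the weight $\frac{k+\alpha_{x_a}}{m}$ from \eqref{e13}, turns this into an isomorphism, and since all weights in the block equal $\alpha_{x_a}$ on both sides, it is an isomorphism of parabolic bundles.
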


\begin{proof}
Both the statements are straightforward to prove. It suffices to prove the second statement only
for parabolic line bundles.
\end{proof}

\begin{proposition}\label{prop4}
A parabolic vector bundle $E_*$ on $X$ is semistable if and only if the parabolic vector
bundle $\phi_*E_*$ on $Z$ is semistable.
\end{proposition}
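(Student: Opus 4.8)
The plan is to deduce both implications from the facts already established: the parabolic-degree identity of Lemma \ref{lem-n} (together with the rank formula $\mathrm{rank}(\phi_*V)=\deg(\phi)\cdot\mathrm{rank}(V)$), the functoriality and the Galois decomposition of Proposition \ref{propn1}, and the statement that pullback preserves semistability (Lemma \ref{lemn2}(2)). The key bookkeeping is that, for any parabolic bundle $V_*$ on $X$,
$$\mu(\phi_*V_*)\,=\,\frac{\text{par-deg}(V_*)}{\deg(\phi)\cdot\mathrm{rank}(V)}\,=\,\frac{\mu(V_*)}{\deg(\phi)},$$
so that $\phi_*$ rescales every slope by the constant factor $1/\deg(\phi)$; in particular $\mu(\phi_*E_*)=\mu(E_*)/\deg(\phi)$.

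For the implication ``$\phi_*E_*$ semistable $\Rightarrow E_*$ semistable'' I would argue by contraposition, directly. If $F_*\subsetneq E_*$ were a destabilizing parabolic subbundle, then, since $\phi$ is finite and flat, $\phi_*$ is exact and preserves local freeness, so $\phi_*F$ is a genuine subbundle of $\phi_*E$. Equipping it with the parabolic structure induced from $\phi_*E_*$ produces a parabolic subbundle, and the slope computation above gives $\mu(\phi_*F)\ge\mu(F_*)/\deg(\phi)>\mu(E_*)/\deg(\phi)=\mu(\phi_*E_*)$, contradicting semistability of $\phi_*E_*$. The one point requiring care is the comparison of parabolic structures: one must check that the structure induced from $\phi_*E_*$ dominates the structure carried by the direct image $\phi_*F_*$ (whose parabolic degree equals $\text{par-deg}(F_*)$ by Lemma \ref{lem-n}), which follows from compatibility of the direct-image filtration \eqref{e13} with the inclusions $F\otimes\mathcal O_U(-kx)\hookrightarrow E\otimes\mathcal O_U(-kx)$.

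For the converse ``$E_*$ semistable $\Rightarrow \phi_*E_*$ semistable'' I would first settle the case where $\phi$ is a (ramified) Galois covering and then reduce the general case to it, in the spirit of Lemma \ref{lemn2}. In the Galois case I would chain the equivalences
$$\phi_*E_*\text{ semistable}\iff\phi^*(\phi_*E_*)_*\text{ semistable}\iff\Big(\bigoplus_{\gamma\in\mathrm{Gal}(\phi)}\gamma^*E_*\Big)\text{ semistable}\iff E_*\text{ semistable},$$
where the first equivalence is Lemma \ref{lemn2}(2) applied to $\phi$, the second is the isomorphism of Proposition \ref{propn1}(2), and the last uses that each automorphism $\gamma$ of $X$ preserves parabolic degree and rank, so all summands $\gamma^*E_*$ have the common slope $\mu(E_*)$ and are semistable exactly when $E_*$ is; here I invoke the general principle that a finite direct sum of semistable parabolic bundles of equal slope is semistable, while any direct summand of a semistable bundle having the slope of the whole is itself semistable (taking $\gamma=\mathrm{id}$ recovers $E_*$ as such a summand).

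For arbitrary $\phi$ I would pass to the Galois closure $\rho=\phi\circ\psi\colon W\to Z$ with $\psi\colon W\to X$. If $E_*$ is semistable then $\psi^*E_*$ is semistable by Lemma \ref{lemn2}(2), hence $\rho_*(\psi^*E_*)_*$ is semistable by the Galois case above; by functoriality (Proposition \ref{propn1}(1)) this equals $\phi_*(\psi_*\psi^*E_*)_*$, and since we work over $\mathbb C$ the normalized trace splits $E_*$ off as a parabolic direct summand of $\psi_*\psi^*E_*$, so applying $\phi_*$ realizes $\phi_*E_*$ as a direct summand of the semistable bundle $\rho_*(\psi^*E_*)_*$. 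A slope computation with Lemma \ref{lem-n} and the pullback-degree formula gives $\mu(\phi_*E_*)=\mu(E_*)/\deg(\phi)=\mu(\rho_*(\psi^*E_*)_*)$, so the summand principle again yields semistability of $\phi_*E_*$. I expect the main obstacle to be not the slope arithmetic, which is routine, but the parabolic bookkeeping underlying these reductions: verifying that the trace splitting and the identification of Proposition \ref{propn1}(2) are genuine isomorphisms of \emph{parabolic} bundles, and that the induced parabolic structures behave as asserted in the easy direction.
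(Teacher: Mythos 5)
Your proof is correct, and its skeleton matches the paper's: your direction ``$\phi_*E_*$ semistable $\Rightarrow E_*$ semistable'' is exactly the paper's argument (push forward a destabilizing subbundle and compare slopes via Lemma \ref{lem-n}), and your converse is reduced, as in the paper, to a Galois closure using Lemma \ref{lemn2}(2) and Proposition \ref{propn1}(2). The genuine difference is \emph{where} the final comparison takes place. The paper works upstairs: choosing $\rho\colon M\to X$ with $\phi\circ\rho$ Galois, it exhibits $(\phi\circ\rho)^*\phi_*E_*$ as a parabolic subbundle, of the same slope, of the semistable bundle $(\phi\circ\rho)^*(\phi\circ\rho)_*\rho^*E_*\cong \bigoplus_{\gamma}\gamma^*\rho^*E_*$, and then descends by Lemma \ref{lemn2}(2). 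You work downstairs on $Z$: you settle the Galois case by a clean chain of equivalences, and then use the characteristic-zero normalized trace to realize $\phi_*E_*$ as a parabolic direct summand, of equal slope, of $\rho_*(\psi^*E_*)_*$. Both routes defer comparable parabolic bookkeeping, which you honestly flag: the paper implicitly needs the adjunction unit $E_*\hookrightarrow \rho_*\rho^*E_*$, pushed forward and pulled back, to remain a parabolic subbundle with the constructed structures, while you need the trace splitting to be a splitting of \emph{parabolic} bundles (this does hold, as one checks on the filtration \eqref{e13}: the diagonal image of $E$ meets the positive-weight pieces trivially and the trace kills them). Note, though, that your argument never really uses the splitting, only the inclusion: a parabolic subbundle of equal slope inside a semistable parabolic bundle is automatically semistable, so you could discard the trace entirely and argue with the unit $E_*\hookrightarrow\psi_*\psi^*E_*$, at which point your proof becomes the mirror image of the paper's, with the subbundle comparison performed after pushing forward to $Z$ rather than after pulling back to the Galois cover. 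What your arrangement buys is that all objects being compared live on $Z$ and no final descent step is needed; what the paper's buys is that only an inclusion, never a splitting, has to be checked against the parabolic structures.
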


\begin{proof}
Take a compact connected Riemann surface $M$ and a nonconstant holomorphic map
$\rho\, :\, M\, \longrightarrow\, X$ such that the composition $\phi\circ\rho$
is (ramified) Galois.

First assume that $E_*$ is semistable. So from Lemma \ref{lemn2}(2) we know that
$\rho^*E_*$ is semistable. From Proposition \ref{propn1}(2) we have
$$
(\phi\circ\rho)^*(\phi\circ\rho)_*\rho^*E_*\,=\, \bigoplus_{\gamma\in {\rm Gal}(\phi\circ\rho)}
\rho^*E_*\, .
$$ 
Therefore, the semistability of $\rho^*E_*$ implies that $(\phi\circ\rho)^*(\phi\circ\rho)_*\rho^*E_*$
is semistable.

Note that $(\phi\circ\rho)^*\phi_* E_*$ is
a parabolic subbundle of $(\phi\circ\rho)^*(\phi\circ\rho)_*\rho^*E_*$. Also, they
have the same parabolic slope; this follows from Lemma \ref{lem-n}
and Lemma \ref{lemn2}(1). Since $(\phi\circ\rho)^*(\phi\circ\rho)_*\rho^*E_*$
is semistable, this implies that $(\phi\circ\rho)^*\phi_* E_*$ is semistable. Now from
Lemma \ref{lemn2}(2) it follows that $\phi_* E_*$ is semistable.

To prove the converse, assume that the parabolic bundle $\phi_* E_*$ is semistable.
If $E_*$ is not semistable, let $F_*\, \subset\, E_*$ be the maximal semistable subbundle
of $E_*$ (the first nonzero term of the Harder-Narasimhan filtration of $E_*$). Since
$\mu(F_*)\, >\, \mu(E_*)$, it follows using Lemma \ref{lem-n}
that $\mu(\phi_* F_*)\, >\, \mu(\phi_* E_*)$, and hence
$\phi_* E_*$ is not semistable. In view of this contradiction we conclude that $E_*$ is semistable.
\end{proof}

\section{Pullback of parabolic Higgs bundles and parabolic connections}
\label{section:pullback}

\subsection{Pullback of parabolic Higgs bundles}

As in \eqref{e5}, let $Y$ be a compact connected Riemann surface and
$$
f\, :\, Y\, \longrightarrow\, X
$$
a nonconstant holomorphic map. As in \eqref{e7}, set $B\,=\, f^{-1}(D)_{\rm red}$.
Take a parabolic vector bundle $E_*\,:=\, \left(E,\, \{E^j_i\}, \,\{\alpha^j_i\}\right)$
on $(X,\, D)$. Choose a Higgs field
$$
\theta\, \in\, H^0(X,\, \text{End}_P(E_*)\otimes K_X(D))
$$
on it. So $(f^*\theta)\big\vert_{Y\setminus B}$ is a Higgs field on the holomorphic vector bundle 
$(f^*E)\big\vert_{Y\setminus B}$.

\begin{proposition}\label{prop1}
The Higgs field $(f^*\theta)\big\vert_{Y\setminus B}$ on $(f^*E)\big\vert_{Y\setminus B}$ extends
to a Higgs field on the parabolic vector bundle $f^*E_*$ constructed in Section \ref{se3}.
\end{proposition}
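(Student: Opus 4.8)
The plan is to reduce the statement to a purely local computation in a neighbourhood of each point $y_{i,j}\in B$, since away from $B$ the section $(f^*\theta)|_{Y\setminus B}$ is manifestly a Higgs field: there $f$ maps into $X\setminus D$, so $\theta$ is locally holomorphic (valued in $\text{End}(E)\otimes K_X$), and its pullback is a holomorphic section of $f^*K_X\subset K_Y=K_Y(B)|_{Y\setminus B}$, while the parabolic structure of $f^*E_*$ is trivial off $B$. Thus the only content is to show that the extension of $f^*\theta$ across each $y_{i,j}$ exists, is holomorphic with at worst a logarithmic pole, and preserves the pulled-back parabolic filtration; such an extension is automatically unique and glues with the construction of $f^*E_*$ in Section \ref{se3}, because $f^*E_*$ is patched via the identity map of $f^*E$.

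First I would fix local coordinates: a coordinate $z$ at $x_i$ and a coordinate $w$ at $y_{i,j}$ with $z=w^{m}$, where $m=m_{i,j}$ is the multiplicity of $f$. Using the local splitting of $E_*$ into parabolic line bundles, I choose a frame $e_1,\ldots,e_r$ of $E$ near $x_i$ adapted to the quasiparabolic filtration, with weights $\beta_1,\ldots,\beta_r\in[0,1)$. Since $K_X(D)$ is locally generated by $\frac{dz}{z}$, I write $\theta=S\cdot\frac{dz}{z}$ with $S=(S_{kk'})$ a local section of $\text{End}_P(E_*)$; the defining condition of $\text{End}_P(E_*)$ forces each $S_{kk'}$ to be holomorphic and, moreover, to vanish at $x_i$ whenever $\beta_k<\beta_{k'}$, since the value $S(x_i)$ must preserve the flag in \eqref{e1}. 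In the frame $\tilde e_k:=w^{-\lfloor m\beta_k\rfloor}f^*e_k$ of $f^*E_*$, whose new weights are the fractional parts $\gamma_k=m\beta_k-\lfloor m\beta_k\rfloor$, and using $\frac{dz}{z}=m\,\frac{dw}{w}$ together with the fact that $\frac{dw}{w}$ generates $K_Y(B)$ at $y_{i,j}$, a direct computation gives that the $(k,k')$ matrix entry of $f^*\theta$ is $m\,S_{kk'}(w^{m})\,w^{\lfloor m\beta_k\rfloor-\lfloor m\beta_{k'}\rfloor}$, whose order of vanishing at $y_{i,j}$ is
\[
N_{kk'}\;=\;m\cdot\mathrm{ord}_z(S_{kk'})+\lfloor m\beta_k\rfloor-\lfloor m\beta_{k'}\rfloor .
\]

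The heart of the argument is then an elementary but delicate estimate on $N_{kk'}$, which I expect to be the main obstacle because the fractional parts $\gamma_k$ need not inherit the ordering of the $\beta_k$. I would establish two facts: (i) $N_{kk'}\ge 0$ always, so that $f^*\theta$ extends holomorphically and is valued in $K_Y(B)$; and (ii) $N_{kk'}\ge 1$ whenever $\gamma_k<\gamma_{k'}$, so that the extension lies in $\text{End}_P\bigl((f^*E)_*\bigr)\otimes K_Y(B)$. For (i), if $\beta_k\ge\beta_{k'}$ then $\mathrm{ord}_z(S_{kk'})\ge 0$ and $\lfloor m\beta_k\rfloor\ge\lfloor m\beta_{k'}\rfloor$ by monotonicity of the floor; if $\beta_k<\beta_{k'}$ then $\mathrm{ord}_z(S_{kk'})\ge 1$ contributes at least $m$, while $\lfloor m\beta_{k'}\rfloor-\lfloor m\beta_k\rfloor\le m-1$ because the weights lie in $[0,1)$, whence $N_{kk'}\ge 1$. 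For (ii), since $N_{kk'}\ge 0$ always, the only possibility of $N_{kk'}=0$ is $\beta_k\ge\beta_{k'}$ together with $\mathrm{ord}_z(S_{kk'})=0$ and $\lfloor m\beta_k\rfloor=\lfloor m\beta_{k'}\rfloor$, and these force $\gamma_k-\gamma_{k'}=m(\beta_k-\beta_{k'})\ge 0$; contrapositively, $\gamma_k<\gamma_{k'}$ yields $N_{kk'}\ge 1$. Collecting the local extensions over all $y_{i,j}$ produces a global section $f^*\theta\in H^0(Y,\,\text{End}_P((f^*E)_*)\otimes K_Y(B))$, that is, a Higgs field on $f^*E_*$, which completes the proof.
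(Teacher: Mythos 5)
Your proof is correct, and its core is the same as the paper's: reduce, via the local splitting of $E_*$ into parabolic line bundles, to an analysis near each $y_{i,j}$ of how the twists by $\lfloor m\beta_k\rfloor$ interact with the vanishing of the off-diagonal components of $\theta$ forced by $\mathrm{End}_P(E_*)$. Where you differ is in the execution. The paper proceeds sheaf-theoretically and by cases: rank one first, then a sum of two parabolic line bundles $L_*\oplus M_*$, distinguishing $\lfloor m\alpha\rfloor=\lfloor m\beta\rfloor$ from $\lfloor m\alpha\rfloor<\lfloor m\beta\rfloor$ (in the latter case invoking the vanishing at $y$ of the fiber map of $\mathcal{O}_Y\hookrightarrow\mathcal{O}_Y((\lfloor m\beta\rfloor-\lfloor m\alpha\rfloor)y)$), and then states that the higher-rank case ``generalizes in a straightforward way.'' You instead work in the coordinates $z=w^{m}$ and prove a single valuation inequality: $N_{kk'}\ge 0$ always, and $N_{kk'}\ge 1$ whenever $\gamma_k<\gamma_{k'}$. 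This uniform estimate subsumes all of the paper's cases, is valid in arbitrary rank (so it actually carries out the generalization the paper leaves to the reader), and it makes explicit the flag-preservation claim --- that the extension lands in $\mathrm{End}_P(f^*E_*)\otimes K_Y(B)$ and not merely in $\mathrm{End}(f^*E)\otimes K_Y(B)$ --- which in the paper's rank-two discussion is handled only implicitly. What the paper's route buys in exchange is that each step is coordinate-free, and the two distinct mechanisms responsible for the extension (vanishing of a component of $\theta$ at $x$, versus vanishing of the fiber map of the twisting line bundle) are displayed separately, whereas in your formula $N_{kk'}=m\,\mathrm{ord}_z(S_{kk'})+\lfloor m\beta_k\rfloor-\lfloor m\beta_{k'}\rfloor$ they appear merged as the two terms of one sum.
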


\begin{proof}
First consider the special case where $\text{rank}(E)\,=\, 1$. In this case
$\text{End}_P(E_*)\, =\, {\mathcal O}_X$, and hence
$$
\theta\, \in\, H^0(X,\, K_X(D))\, .
$$
So
$$
f^*\theta\, \in\, H^0(Y,\, K_Y\otimes {\mathcal O}_Y(B))\, ,
$$
because $f^*K_X(D) \,=\, K_Y(B)$.
Therefore, $f^*\theta$ is a Higgs field on the parabolic line bundle $f^*E_*$.

Next consider the case where $E_*$ is a direct sum of two parabolic line bundles.
In other words,
$$
E_*\,\,=\,\, L_*\oplus M_*\, ,
$$
where $L_*$ and $M_*$ are parabolic line bundles with parabolic structure over $D$. The holomorphic
line bundle underlying $L_*$ (respectively, $M_*$) will be denoted by $L$ (respectively, $M$).
Take a parabolic point $x\, \in\, D$. Let $\alpha$ (respectively, $\beta$) be the parabolic
weight of $L_*$ (respectively, $M_*$) at $x$. Take a point $y\, \in\, f^{-1}(x)$, and let $m\, \geq\, 1$
be the multiplicity of $f$ at $y$.

If $\beta\,=\, \alpha$, then it is easy to see that $f^*\theta$ extends, across the point $y$, to a Higgs field on
the parabolic bundle $f^*E_*$ around $y$.

Assume that
\begin{equation}\label{e18}
\beta\, >\, \alpha.
\end{equation}
First consider the case where $\lfloor{m\alpha}\rfloor\,=\,
\lfloor{m\beta}\rfloor$. Then from \eqref{e18} we have
\begin{equation}\label{e17}
m\beta - \lfloor{m\beta}\rfloor \,\, >\,\, m\alpha - \lfloor{m\alpha}\rfloor\, .
\end{equation}
Take a Higgs field
$$
\theta\, \in\, H^0(X,\, \text{End}_P(E_*)\otimes K_X(D))
$$
on $E_*\,=\, L_*\oplus M_*$. Consider the homomorphism
$$
\theta(x)\, :\, 
L_x\oplus M_x \, \longrightarrow\, (L_x\oplus M_x)\otimes K_X(D)_x \,=\,
L_x\oplus M_x\, ;
$$
recall from \eqref{pa} that $K_X(D)_x\,=\, {\mathbb C}$. From \eqref{e18} it follows that the composition
of homomorphisms
\begin{equation}\label{e19}
M_x \, \hookrightarrow\, L_x\oplus M_x \, \xrightarrow{\,\, \theta(x)\,\,\,} L_x\oplus M_x\,
\longrightarrow\, L_x
\end{equation}
is the zero homomorphism (see \eqref{e3}).

Let $\widetilde{E}$ denote the holomorphic vector bundle underlying the parabolic vector bundle
$f^*E_*$. From the construction of $f^*E_*$ we have
$$
\widetilde{E}_y\,=\, (L_x\otimes {\mathcal O}_Y(\lfloor{m\alpha}\rfloor y)_y)
\oplus (M_x\otimes {\mathcal O}_Y (\lfloor{m\beta}\rfloor y)_y)\,=\, (L_x\oplus M_x)\otimes 
{\mathcal O}_Y(\lfloor{m\alpha}\rfloor y)_y
$$
(recall that $\lfloor{m\alpha}\rfloor\,=\, \lfloor{m\beta}\rfloor$). Since the composition of homomorphisms
in \eqref{e19} vanishes, it now follows that $f^*\theta$ extends, across the point $y$, to a Higgs field on
the parabolic bundle $f^*E_*$ around $y$.

Next consider the case where $$\lfloor{m\alpha}\rfloor\,<\, \lfloor{m\beta}\rfloor\, .$$
Now we have
$$
\widetilde{E}_y\,=\, (L_x\otimes {\mathcal O}_Y(\lfloor{m\alpha}\rfloor y)_y)
\oplus (M_x\otimes {\mathcal O}_Y (\lfloor{m\beta}\rfloor y)_y)
$$
$$
=\, (L_x\oplus (M_x\otimes
{\mathcal O}_Y ((\lfloor{m\beta}\rfloor-\lfloor{m\alpha}\rfloor)y)_y))\otimes 
{\mathcal O}_Y(\lfloor{m\alpha}\rfloor y)_y\, .
$$

If $m\beta - \lfloor{m\beta}\rfloor \, >\, m\alpha - \lfloor{m\alpha}\rfloor$, then the argument
for the previous case works. On the other hand, if
$$m\beta - \lfloor{m\beta}\rfloor \,\, \leq\,\, m\alpha - \lfloor{m\alpha}\rfloor\, ,$$
then note that the natural homomorphism
$$
{\mathcal O}_Y\,\, \hookrightarrow\,\, {\mathcal O}_Y ((\lfloor{m\beta}\rfloor-\lfloor{m\alpha}\rfloor)y)
$$
has the property that the homomorphism between the fibers over $y$
$$
{\mathcal C}\,=\, ({\mathcal O}_Y)_y\, \longrightarrow\,
({\mathcal O}_Y ((\lfloor{m\beta}\rfloor-\lfloor{m\alpha}\rfloor)y)_y
$$
vanishes. From this it follows that $f^*\theta$ extends, across the point $y$, to a Higgs field on
the parabolic bundle $f^*E_*$ around $y$.

Therefore, for any Higgs field $\theta\, \in\, H^0(X,\, \text{End}_P(E_*)
\otimes K_X(D))$ on $E_*$, the pullback $f^*\theta$ produces Higgs field on $f^*E_*$.

For the case of higher rank parabolic bundles, the above argument generalizes in a straightforward way.
\end{proof}

Proposition \ref{prop1} has the following straightforward consequence:

\begin{corollary}\label{cor1}
If $(E_*,\, \theta)$ is a strongly parabolic Higgs bundle, then $(f^*E_*,\, f^*\theta)$ is also
a strongly parabolic Higgs bundle.
\end{corollary}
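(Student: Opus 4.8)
The plan is to show that the extension of $f^*\theta$ produced in Proposition \ref{prop1} actually lands in $\text{End}_n(f^*E_*)\otimes K_Y(B)$, i.e.\ that its residue at each point of $B$ strictly lowers the quasiparabolic filtration. Since strong parabolicity is a local condition at the points of $B$, and the question is insensitive to the patching over the open cover used in Section \ref{se3}, I would reduce at once, exactly as in the proof of Proposition \ref{prop1}, to the situation where $E_*\,=\,\bigoplus_{k} L(k)_*$ is a direct sum of parabolic line bundles near a parabolic point $x\,\in\, D$, with $L(k)_*$ carrying the weight $\gamma_k$ at $x$. In this description the residue $R\,=\,\text{Res}(\theta,\,x)$ is a matrix $(R_{k'k})$, and the defining condition for $\theta$ to be strongly parabolic translates into the statement that $R_{k'k}\,\neq\, 0$ forces $\gamma_{k'}\,>\,\gamma_k$; that is, a nonzero entry of the residue strictly raises the weight.

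Next I would carry out the local residue computation at a point $y\,\in\, f^{-1}(x)$ of multiplicity $m$. Choosing coordinates $z$ at $x$ and $w$ at $y$ with $z\,=\,w^m$, one has $f^*\!\left(\frac{dz}{z}\right)\,=\, m\,\frac{dw}{w}$, so on $f^*E$ the residue of $f^*\theta$ at $y$ is $m\,R$. Passing to the bundle $\widetilde{E}$ underlying $f^*E_*$ introduces the twist by $\mathcal{O}_Y(\lfloor m\gamma_k\rfloor y)$ in the $k$-th summand, so the $(k',k)$-entry of the residue on $\widetilde{E}$ acquires the factor $w^{\lfloor m\gamma_{k'}\rfloor-\lfloor m\gamma_k\rfloor}$. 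Hence this entry contributes a nonzero residue only when $\lfloor m\gamma_{k'}\rfloor\,=\,\lfloor m\gamma_k\rfloor$, in which case it equals $m\,R_{k'k}$; when $\lfloor m\gamma_{k'}\rfloor\,>\,\lfloor m\gamma_k\rfloor$ the residue vanishes, and the remaining case cannot produce a nonzero residue because Proposition \ref{prop1} guarantees that $f^*\theta$ is already a bona fide Higgs field on $f^*E_*$ (with at worst a logarithmic pole).

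Finally I would combine the two ingredients. The new parabolic weight of the $k$-th summand at $y$ is $\tilde\gamma_k\,=\, m\gamma_k-\lfloor m\gamma_k\rfloor$. Suppose the $(k',k)$-entry of the residue on $\widetilde{E}$ is nonzero; by the previous step $\lfloor m\gamma_{k'}\rfloor\,=\,\lfloor m\gamma_k\rfloor$ and $R_{k'k}\,\neq\, 0$, so strong parabolicity of $\theta$ gives $\gamma_{k'}\,>\,\gamma_k$, whence $\tilde\gamma_{k'}-\tilde\gamma_k\,=\, m(\gamma_{k'}-\gamma_k)\,>\,0$. Thus every nonzero entry of $\text{Res}(f^*\theta,\,y)$ strictly raises the new weight, which is precisely the condition that $f^*\theta$ be strongly parabolic on $f^*E_*$. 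I expect the only delicate point to be the bookkeeping in this last step: two distinct original weights may collapse to the same fractional part $\tilde\gamma_k$, but then their integer parts $\lfloor m\gamma_k\rfloor$ differ, so by the residue computation the corresponding entry vanishes and no violation of strict raising occurs. Once this bookkeeping is settled, the higher-rank case follows verbatim, exactly as in Proposition \ref{prop1}.
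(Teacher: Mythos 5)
Your overall route --- redoing the local line-bundle computation from Proposition \ref{prop1} while tracking residues --- is exactly what the paper intends (it offers no proof beyond calling the corollary a straightforward consequence of Proposition \ref{prop1}), and your first two cases and the final weight comparison are correct. The gap is in your third case, $\lfloor m\gamma_{k'}\rfloor \,<\, \lfloor m\gamma_k\rfloor$. There the $(k',k)$-entry of the Higgs field on the twisted bundle is $w^{-d}h_{k'k}(w)\frac{dw}{w}$, where $d \,=\, \lfloor m\gamma_k\rfloor - \lfloor m\gamma_{k'}\rfloor \,\geq\, 1$ and $h_{k'k}$ is the corresponding entry on $f^*E$. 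Your justification --- that Proposition \ref{prop1} gives at worst a logarithmic pole --- only says that $w^{-d}h_{k'k}(w)$ is holomorphic; it does not make its value at $w\,=\,0$ vanish (if $h_{k'k}$ had a zero of exact order $d$, the residue entry would be nonzero). Nor does the stronger fact that $f^*\theta$ preserves the new parabolic filtration help: in this case one can have $\tilde\gamma_{k'} \,\geq\, \tilde\gamma_k$ (take $m\,=\,2$, $\gamma_k\,=\,1/2$, $\gamma_{k'}\,=\,0$, so that $\tilde\gamma_{k'}\,=\,\tilde\gamma_k\,=\,0$), and filtration preservation then permits a nonzero entry. Moreover, this case cannot be discarded as irrelevant: it is precisely the one your final ``bookkeeping'' step appeals to, since two weights $\gamma_{k'}\,<\,\gamma_k$ with $m(\gamma_k-\gamma_{k'})\,\in\,\mathbb{Z}$ collapse to equal new weights, where a surviving entry would indeed destroy strong parabolicity.

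The missing idea is that the entries of $f^*\theta$ are pullbacks, not arbitrary holomorphic functions. Writing $\theta_{k'k}\,=\,g_{k'k}(z)\frac{dz}{z}$ near $x$, one has $h_{k'k}(w)\,=\,m\,g_{k'k}(w^m)$, so the vanishing order of $h_{k'k}$ at $w\,=\,0$ is a multiple of $m$. In your third case $\gamma_{k'}\,<\,\gamma_k$, so already ordinary (weak) parabolicity of $\theta$ forces $g_{k'k}(0)\,=\,R_{k'k}\,=\,0$; hence $h_{k'k}$ vanishes to order at least $m$, while $d\,\leq\, m-1$. Therefore $w^{-d}h_{k'k}(w)$ vanishes at $w\,=\,0$ and the residue entry is zero. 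Inserting this short argument in place of your appeal to Proposition \ref{prop1} closes the gap, and the rest of your proof then goes through as written.
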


\begin{remark}\label{lemma:unique}
The extension described in Proposition \ref{prop1} is evidently the unique extension of $f^*\theta$ to a parabolic Higgs bundle
on $f^*E_*$ over $(Y,\,B)$.
\end{remark}

\begin{lemma}\label{lem2}
A parabolic Higgs bundle $(E_*,\, \theta)$ on $X$ is semistable if and only if
the parabolic Higgs bundle $(f^*E_*,\, f^*\theta)$ is semistable.
\end{lemma}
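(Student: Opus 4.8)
The plan is to mirror the proof of Lemma~\ref{lemn2}(2), carrying the Higgs field and the $\theta$-invariance of destabilizing subbundles through every step. First note that $(f^*E_*,\, f^*\theta)$ is a genuine parabolic Higgs bundle by Proposition~\ref{prop1}, so the statement is meaningful, and that for any composition the pullback of a parabolic Higgs bundle is functorial: the underlying parabolic bundles agree by Lemma~\ref{lemn1}, and the Higgs fields agree by the uniqueness of the extension recorded in Remark~\ref{lemma:unique}.

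For the ``only if'' direction I would argue the contrapositive. Suppose $(E_*,\,\theta)$ is not semistable and pick a subbundle $F\,\subsetneq\, E$ of positive rank with $\theta(F)\,\subset\, F\otimes K_X(D)$ and $\mu(F_*)\,>\,\mu(E_*)$, where $F_*$ carries the induced parabolic structure. Then $f^*F_*$ is a parabolic Higgs subbundle of $(f^*E_*,\, f^*\theta)$: the subbundle $f^*F\,\subset\, f^*E$ is preserved by $f^*\theta$ because $f^*\theta(f^*F)\,\subset\, f^*F\otimes f^*K_X(D)\,=\, f^*F\otimes K_Y(B)$. Since ranks are unchanged and parabolic degrees are multiplied by $\deg(f)$ under pullback (Lemma~\ref{lemn2}(1)), we get $\mu(f^*F_*)\,=\,\deg(f)\cdot\mu(F_*)\,>\,\deg(f)\cdot\mu(E_*)\,=\,\mu(f^*E_*)$, so $f^*F_*$ violates semistability of $(f^*E_*,\, f^*\theta)$.

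For the converse I would reduce, exactly as in Lemma~\ref{lemn2}(2), to the case where $f$ is a (ramified) Galois covering: choose $\psi\,:\, Z\,\longrightarrow\, Y$ with $f\circ\psi$ Galois, apply the Galois case to $f\circ\psi$, and then apply the already established ``only if'' direction to $\psi$, using the functoriality noted above together with Lemma~\ref{lemn2}(1) for the degree bookkeeping. So assume $f$ is Galois with group $\operatorname{Gal}(f)$ and that $(f^*E_*,\, f^*\theta)$ is not semistable. Parabolic Higgs bundles admit a Harder--Narasimhan filtration by $\theta$-invariant parabolic subbundles; let $G_*\,\subset\, f^*E_*$ be its first nonzero term, the \emph{canonical} maximal destabilizing $f^*\theta$-invariant parabolic subbundle, so $\mu(G_*)\,>\,\mu(f^*E_*)$. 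Because $f\circ\gamma\,=\,f$ for all $\gamma\,\in\,\operatorname{Gal}(f)$, the pullback field $f^*\theta$ is $\operatorname{Gal}(f)$-invariant, hence $\operatorname{Gal}(f)$ acts by automorphisms of the parabolic Higgs bundle $(f^*E_*,\, f^*\theta)$; by uniqueness of the maximal destabilizing term, $G_*$ is preserved by this action. Therefore $G_*$ descends to a parabolic subbundle $F_*\,\subset\, E_*$ with $f^*F_*\,=\, G_*$, and $\theta(F)\,\subset\, F\otimes K_X(D)$ (the composite $F\hookrightarrow E\xrightarrow{\theta}(E/F)\otimes K_X(D)$ vanishes because its pullback does and $f^*$ is injective on bundle homomorphisms, $f$ being surjective). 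Applying Lemma~\ref{lemn2}(1) once more gives $\mu(F_*)\,=\,\mu(G_*)/\deg(f)\,>\,\mu(f^*E_*)/\deg(f)\,=\,\mu(E_*)$, so $(E_*,\,\theta)$ is not semistable.

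The main obstacle is the central step of the converse: one must know that the maximal destabilizing object of the Harder--Narasimhan filtration of a parabolic Higgs bundle is a $\theta$-invariant parabolic subbundle and is canonical, hence fixed by every automorphism and in particular by the Galois action, and that such a $\operatorname{Gal}(f)$-invariant, $f^*\theta$-invariant parabolic subbundle descends along $f$. Existence and uniqueness of the parabolic Higgs Harder--Narasimhan filtration is standard, and the descent of the underlying parabolic subbundle is the same argument already used for parabolic bundles in Lemma~\ref{lemn2}; the only genuinely new point is that $\theta$-invariance descends, which is the faithfulness of $f^*$ on homomorphisms away from $B$.
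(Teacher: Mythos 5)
Your proof is correct and follows essentially the same route as the paper: the paper's proof of Lemma~\ref{lem2} simply says that the argument of Lemma~\ref{lemn2}(2) goes through with the obvious modifications, and your proposal is precisely that argument with the modifications made explicit (carrying $\theta$-invariance through the contrapositive step, reducing to the Galois case, using the canonicity of the maximal destabilizing $\theta$-invariant subbundle to get Galois invariance and descent). The additional details you supply---uniqueness of the Higgs field extension from Remark~\ref{lemma:unique} for functoriality, and the vanishing of the induced map $F\to (E/F)\otimes K_X(D)$ to descend $\theta$-invariance---are exactly the ``obvious modifications'' the paper leaves implicit.
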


\begin{proof}
The proof of Lemma \ref{lemn2}(2) goes through once the obvious modifications to it are made.
\end{proof}

Take any parabolic Higgs bundle $(E_*,\, \theta)$ of rank $r$ on $X$. The Hitchin map $\mathcal H$ sends it
to $\bigoplus_{i=1}^r \text{trace}(\theta^i)\, \in\, H^0(X,\, K_X(D)^{\otimes i})$ (see \cite{Hi}). We
have the natural pullback map
$$
f^*_i\,:\, H^0(X,\, K_X(D)^{\otimes i})\, \longrightarrow\, H^0(Y,\, K_Y(B)^{\otimes i})\, .
$$

The following lemma is straightforward.

\begin{lemma}\label{lemh}
For any parabolic Higgs bundle $(E_*,\, \theta)$ of rank $r$ on $X$,
$$
\left(\bigoplus_{i=1}^r f^*_i\right)\circ {\mathcal H}(E_*,\, \theta)\,=\, {\mathcal H}(f^*E_*,\, f^*\theta)\, ,
$$
where $\mathcal H$ denotes the Hitchin map.
\end{lemma}

\subsection{Pullback of parabolic connections}\label{section:pullbackConnection}

Take a holomorphic vector bundle $V$ on $X$, and let
$$
{\nabla}\,\,:\,\, V\, \longrightarrow\, V\otimes K_X(D)
$$
be a logarithmic connection on $V$ singular over $D$. Then $f^*\nabla$ is a logarithmic connection $f^* V$
singular over $B\,=\, f^{-1}(D)_{\rm red}$, where $f$ is a holomorphic map as in \eqref{e5}. For $x_i\, \in\, D$, if
$${\rm Res}({\nabla},\, x_i)\, \in\, \text{End}(V_{x_i})$$ is the residue of $\nabla$ at $x_i$, then
the residue of $f^*\nabla$ at any $y_{i,j}\,\in\, f^{-1}(x_i)$ is
$$
m_{i,j}\cdot {\rm Res}({\nabla},\, x_i)\, \in\, \text{End}((f^*V)_{y_{i,j}})\,=\,
\text{End}(V_{x_i})\, ,
$$
where $m_{i,j}$, as before, denotes the multiplicity of $f$ at the point $y_{i,j}$. 

We will now describe another property of the residue that will be useful here.

Take a connected Riemann surface $M$ together with a point $x\, \in\, M$. Let $V$ be a holomorphic vector
bundle on $M$ and 
$$
{\nabla}\,\,:\,\, V\, \longrightarrow\, V\otimes K_M(x)\,=\, V\otimes K_M\otimes {\mathcal O}_M(x)
$$
a logarithmic connection on $V$ singular over $x$. Take a subspace
$$
H\,\, \subset\,\, V_x
$$
such that ${\rm Res}({\nabla},\, x)(H)\, \subset\, H$. Let $W$ be the holomorphic vector bundle on $M$
defined by the following short exact sequence of coherent analytic sheaves on $M$:
$$
0\, \longrightarrow\, W \, \longrightarrow\, V \, \longrightarrow\, V_x/H \, \longrightarrow\, 0\, .
$$
Let $\delta\, :\, W_x \, \longrightarrow\, V_x$ be the corresponding homomorphism of fibers over $x$. The
composition of homomorphisms
$$
W\, \longrightarrow\, W_x \, \stackrel{\delta}{\longrightarrow}\, V_x
$$
will be denoted by $\widetilde\delta$. We also have the short exact sequence of coherent analytic sheaves on $M$
$$
0\, \longrightarrow\, V\otimes {\mathcal O}_M(-x) \, \longrightarrow\, W \,
\stackrel{\widetilde\delta}{\longrightarrow}\, \widetilde{\delta}(W) \, \longrightarrow\, 0\, .
$$
Let $\eta\, :\, (V\otimes {\mathcal O}_M(-x))_x \, \longrightarrow\, W_x$
be the corresponding homomorphism of fibers over $x$. So we have an exact sequence
\begin{equation}\label{ee}
0\, \longrightarrow\, H\otimes ({\mathcal O}_M(-x))_x \, \longrightarrow\,
(V\otimes {\mathcal O}_M(-x))_x \, \stackrel{\eta}{\longrightarrow}\, W_x
\, \stackrel{\delta}{\longrightarrow}\, V_x \, \longrightarrow\, V_x/H \, \longrightarrow\, 0\, .
\end{equation}

The logarithmic connection ${\nabla}$ on $V$ produces a logarithmic connection on $V\otimes {\mathcal O}_M(-x)$;
this logarithmic connection on $V\otimes {\mathcal O}_M(-x)$ will be denoted by ${\nabla}'$. We have
$$
{\rm Res}({\nabla}',\, x)\,=\, {\rm Res}({\nabla},\, x) +\text{Id}_{V_x}\, ;
$$
note that $\text{End}((V\otimes {\mathcal O}_M(-x))_x)\,=\, \text{End}(V_x)$. So the subspace
$$
H\otimes ({\mathcal O}_M(-x))_x \, \hookrightarrow\,
(V\otimes {\mathcal O}_M(-x))_x
$$
in \eqref{ee} is preserved by ${\rm Res}({\nabla}',\, x)$; recall that ${\rm Res}({\nabla},\, x)(H)\, \subset\, H$.
The logarithmic connection ${\nabla}$ on
$V$ induces a logarithmic connection on $W$; this induced logarithmic connection on $W$ will be denoted
by ${\nabla}^W$. The residue ${\rm Res}({\nabla}^W,\, x)$ preserves the image of $\eta$ in \eqref{ee}, and
the restriction of ${\rm Res}({\nabla}^W,\, x)$ to the image of $\eta$ coincides with the endomorphism
induced by ${\rm Res}({\nabla}',\, x)$ (as noted above, $H\otimes ({\mathcal O}_M(-x))_x$ is preserved by
${\rm Res}({\nabla}',\, x)$), and hence ${\rm Res}({\nabla}',\, x)$ produces an endomorphism of the image of $\eta$.
Also, the action of ${\rm Res}({\nabla}^W,\, x)$ on the
image of $\delta$ in \eqref{ee} coincides with the action of ${\rm Res}({\nabla},\, x)$ on $S$.

Let $\nabla$ be a parabolic connection on a parabolic vector bundle $E_*$ on $X$. Consider the holomorphic
connection on $(f^*E)\big\vert_{Y\setminus B}$ obtained by pulling back the connection $\nabla$ using the
map $f$ in \eqref{e5}, where $B$ is defined in \eqref{e7}. This connection actually extends to a connection
on the parabolic vector bundle $f^*E_*$. Indeed, this is straightforward to check using the construction
of the parabolic vector bundle $f^*E_*$ and the above mentioned properties of residue of a logarithmic connection. Moreover,
this extension is unique (as in Remark \ref{lemma:unique}).

We can now prove the following:

\begin{theorem}\label{thm1}
Take any $f:Y\longrightarrow X$ as in \eqref{e5}. A parabolic vector bundle $E_*$ on $X$ is polystable if and only
if the parabolic vector bundle $f^*E_*$ on $Y$ is polystable.
\end{theorem}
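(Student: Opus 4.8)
The plan is to reduce the statement to the case of a (ramified) Galois covering, exactly as in the proof of Lemma \ref{lemn2}(2), and then to run a socle argument. Recall that a semistable parabolic bundle of slope $\mu$ is polystable precisely when it equals its \emph{socle}, the maximal polystable parabolic subbundle of the same slope $\mu$. The key formal property is that the socle is canonical, hence preserved by every automorphism of the bundle, and that it is additive over direct sums; these facts rest on the category of semistable parabolic bundles of a fixed slope being abelian and semisimple.

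First I would treat the case where $f$ is Galois with group $G=\text{Gal}(f)$. If $E_*$ is stable, then $f^*E_*$ is semistable by Lemma \ref{lemn2}(2), and its socle $S$ is $G$-invariant by canonicity; by Galois descent (as in Lemma \ref{lemn2}(2)) one gets $S=f^*F_*$ for a parabolic subbundle $F_*\subseteq E_*$. Since $\mu(S)=\mu(f^*E_*)=\text{degree}(f)\cdot\mu(E_*)$ and $\mu(f^*F_*)=\text{degree}(f)\cdot\mu(F_*)$ by Lemma \ref{lemn2}(1), we obtain $\mu(F_*)=\mu(E_*)$, and stability of $E_*$ forces $F_*=E_*$. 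Hence $S=f^*E_*$, so $f^*E_*$ is polystable. Writing an arbitrary polystable $E_*$ as a direct sum of stable parabolic bundles of the same slope, whose pullbacks again share a common slope, this proves that $E_*$ polystable implies $f^*E_*$ polystable in the Galois case. For the converse, let $S_*$ be the socle of the (semistable) bundle $E_*$; then $f^*S_*\subseteq f^*E_*$ has slope $\mu(f^*E_*)$, so it is a direct summand of the polystable bundle $f^*E_*$. Averaging any projection onto $f^*S_*$ over $G$ yields a $G$-invariant complement, which descends to a splitting $E_*=S_*\oplus Q_*$. Additivity of the socle together with $\text{socle}(E_*)=S_*$ forces $\text{socle}(Q_*)=0$, hence $Q_*=0$ and $E_*=S_*$ is polystable.

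To pass to a general nonconstant $f\colon Y\longrightarrow X$, I would choose a Galois closure: a compact connected Riemann surface $Z$ with a nonconstant map $\psi\colon Z\longrightarrow Y$ such that $g:=f\circ\psi\colon Z\longrightarrow X$ is Galois; since $Y$ is then the quotient of $Z$ by a subgroup of $\text{Gal}(g)$, the map $\psi$ is Galois as well. By Lemma \ref{lemn1} we have $g^*E_*=\psi^*(f^*E_*)_*$. If $E_*$ is polystable, the Galois case applied to $g$ gives that $g^*E_*=\psi^*(f^*E_*)$ is polystable, and the Galois converse applied to $\psi$ then yields that $f^*E_*$ is polystable. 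Conversely, if $f^*E_*$ is polystable, the Galois case applied to $\psi$ shows $\psi^*(f^*E_*)=g^*E_*$ is polystable, and the Galois converse applied to $g$ shows that $E_*$ is polystable.

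The degree and slope bookkeeping, the additivity of the socle, and the Galois descent of invariant subbundles all follow the pattern already used in Sections \ref{se3} and \ref{se4}. The main obstacle is the converse in the Galois case: one must know that the category of semistable parabolic bundles of a fixed slope is abelian and semisimple, so that the canonical subobject $f^*S_*$ admits a complement and averaging over $G$ produces a $G$-invariant one. Establishing this semisimplicity for parabolic bundles—equivalently, that the socle is well defined and additive—is the point requiring the most care; once it is in place, the rest is formal.
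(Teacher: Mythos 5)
Your proof is correct, but it takes a genuinely different route from the paper's. The paper's forward direction is analytic and needs no Galois reduction: it first shows that $E_*$ is polystable if and only if $\text{End}_P(E_*)_*$ is polystable, using Biquard's theorem \cite{Biq} that a polystable parabolic bundle carries a unique unitary projectively flat connection; since pullback commutes with parabolic tensor product and dualization, the flat connection on $\text{End}_P(E_*)_*$ pulls back, giving polystability of $\text{End}_P(f^*E_*)_*$ and hence of $f^*E_*$ for an arbitrary nonconstant $f$. For the converse the paper also passes to a cover $g$ with $f\circ g$ Galois, takes the socle $F_*\subset E_*$, but produces the Galois-invariant complement of $(f\circ g)^*F_*$ as the \emph{orthogonal} complement with respect to the Hermitian structure attached to the projectively flat connection on $(f\circ g)^*E_*$, then descends the splitting and runs the same socle-maximality contradiction you do. Your argument replaces both analytic ingredients by algebra: in the forward Galois case you descend the socle of $f^*E_*$ and use stability of $E_*$ to force it to be everything, and in the converse you replace the metric orthogonal complement by averaging an idempotent projection onto $f^*S_*$ over $\text{Gal}(f)$ --- the algebraic substitute for orthogonality. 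Combined with your observation that the Galois closure $\psi\colon Z\longrightarrow Y$ is itself Galois (so both implications reduce to the Galois case through Lemma \ref{lemn1}), this gives a purely algebraic proof. What the paper's approach buys instead is a forward direction valid for all $f$ at once, and machinery ($\text{End}_P$, flat connections, harmonic metrics) that it reuses in its nonabelian Hodge sections.

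One justification in your write-up should be repaired. The category of semistable parabolic bundles of a fixed slope is abelian and of finite length, but it is \emph{not} semisimple --- if it were, every semistable parabolic bundle would be polystable and the theorem would be vacuous. What your steps actually require is weaker and true: in this finite-length abelian category the polystable bundles are precisely the semisimple objects, so the socle (the maximal polystable subbundle of the same slope, \cite[p.~24, Lemma~1.5.5]{HL}, exactly as invoked in the paper) exists, is canonical under isomorphism and additive over direct sums, and any subobject of the same slope of a polystable object is a direct summand; the last fact is what your averaging step needs in order to have some projection onto $f^*S_*$ to average. With the justification stated this way, your proof goes through.
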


\begin{proof}
The parabolic endomorphism bundle $\text{End}_P(E_*)$ in \eqref{e3} has a parabolic structure
induced by the parabolic structure of $E_*$. In fact, this parabolic bundle $\text{End}_P(E_*)_*$ is
the parabolic tensor product $E_*\otimes E^*_*$, where $E^*_*$ is the parabolic dual of $E_*$.

It can be shown that the parabolic vector bundle $E_*$ is polystable if and only if $\text{End}_P(E_*)$
is polystable. To prove this first assume that $E_*$ is polystable. Then $E_*$ has a unique unitary
projectively flat complex connection \cite{Biq} (see also \cite{MS}). The connection on $E_*\otimes E^*_*$
induced by this connection on $E_*$ is unitary flat. Hence $E_*\otimes E^*_*\,=\, \text{End}_P(E_*)_*$ is
polystable.

Next assume that $\text{End}_P(E_*)_*$ is polystable. We will first show that $E_*$ is semistable. If $E_*$
is not semistable, let $F_*\, \subset\, E_*$ be the first nonzero term of the Harder--Narasimhan filtration of
$E_*$ (so $F_*$ is the maximal semistable subsheaf of $E_*$). Now the parabolic subbundle
$$
\text{Hom}(E_*,\, F_*)_*\, =\, F_*\otimes E^*_*\, \subset\, E_*\otimes E^*_*\,=\, \text{End}(E_*)_*
$$
contradicts the semistability condition for $\text{End}_P(E_*)_*$. Hence $E_*$ is semistable. Let
$V_*\, \subset\, E_*$ be the unique maximal parabolic polystable subbundle, of same parabolic
slope, of the parabolic semistable bundle
$E_*$ \cite[p.~24, Lemma 1.5.5]{HL}, so $V_*$ is the socle of $E_*$. Assume that $V*\, \not=\, E_*$.
Since $\text{End}_P(E_*)_*$ is polystable, its subbundle
$$
\text{Hom}(E_*,\, V_*)_*\, =\, V_*\otimes E^*_*\, \subset\, E_*\otimes E^*_*\,=\, \text{End}(E_*)_*
$$
is also polystable. Fix a subbundle $W_*\, \subset\, \text{End}(E_*)_*$ such that
$\text{End}(E_*)_*\,=\, \text{Hom}(E_*,\, V_*)_*\oplus W_*$. Now $W_*(V_*)$ is a polystable subbundle of
$E_*$ and it contradicts the maximality of the socle $V_*$. Therefore, we conclude that $E_*$ is polystable.

To prove the theorem first assume that $E_*$ is polystable. As shown above, this implies that
$\text{End}_P(E_*)_*$ has a unitary flat holomorphic connection. On the other hand, we have
$$
f^*\text{End}_P(E_*)_*\,=\, \text{End}_P(f^*E_*)\, ,
$$
because the pullback operation is compatible with tensor product, direct sum and dualization operations
on parabolic bundles. So the unitary flat holomorphic connection on $\text{End}_P(E_*)_*$ pulls back to
a unitary flat holomorphic connection on $\text{End}_P(f^*E_*)_*$. This implies that $\text{End}_P(f^*E_*)_*$
is polystable. Hence $f^*E_*$ is polystable.

To prove the converse, assume that $f^*E_*$ is polystable. If $f$ is not (ramified) Galois, choose another
nonconstant holomorphic surjective map $g\, :\, M\, \longrightarrow\, Y$ such that $f\circ g$ is
(ramified) Galois. Since $f^*E_*$ is polystable, we know that $g^*f^*E_*\,=\, (f\circ g)^*E_*$ is polystable.
Let $\nabla$ denote the unique unitary projectively flat complex connection on $(f\circ g)^*E_*$. We note that
$\nabla$ is preserved by the natural action of the Galois group $\text{Gal}(f\circ g)$ on $(f\circ g)^*E_*$.

Since $(f\circ g)^*E_*$ is semistable, from Lemma \ref{lemn2}(2) we know that $E_*$ is semistable. Let
$$
F_*\, \subset\, E_*
$$
be the socle, meaning the unique maximal polystable subbundle of same parabolic
slope; see \cite[p.~23, Lemma 1.5.5]{HL}. Consider the parabolic subbundle
$$
(f\circ g)^*F_*\, \subset\, (f\circ g)^*E_*\, ;
$$
it is polystable because $F_*$ is so. Let $G_*\, \subset\, (f\circ g)^*E_*$ be the orthogonal complement of
$(f\circ g)^*F_*$ for the Hermitian structure on $(f\circ g)^*E_*$ (to which the unique unitary projectively
flat complex connection is associated).

Since both $(f\circ g)^*F_*$ and $\nabla$ are preserved by the natural
action of the Galois group $\text{Gal}(f\circ g)$ on $(f\circ g)^*E_*$, we conclude that
$G_*\, \subset\, (f\circ g)^*E_*$ is also preserved by the natural
action of the Galois group $\text{Gal}(f\circ g)$ on $(f\circ g)^*E_*$. Therefore, there is a parabolic
subbundle $G'_*\, \subset\, E_*$ such that the two parabolic subbundles $G_*$ and
$(f\circ g)^*G'_*$ of $(f\circ g)^*E_*$ coincide. We now have
\begin{equation}\label{f1}
E_*\,=\, F_*\oplus G'_*
\end{equation}
because $(f\circ g)^*E_*\,=\, (f\circ g)^*F_*\oplus G_*$. But \eqref{f1} contradicts the fact that
$F_*$ is the unique maximal polystable subbundle of $E_*$ of same parabolic slope. This is because the
direct sum of $F_*$ with the socle of $G'_*$ (if $G'_*$ is nonzero) is also polystable. Therefore,
we conclude that $E_*$ is polystable.
\end{proof}

The following proposition is proved using Theorem \ref{thm1}.

\begin{proposition}\label{prop3}
Take $\phi$ as in \eqref{ep}. For any polystable parabolic bundle $E_*$ on
$X$ the parabolic bundle $\phi_*E_*$ on $Z$ is polystable.
\end{proposition}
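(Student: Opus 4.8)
The plan is to mirror the proof of Proposition \ref{prop4}, replacing semistability by polystability throughout, and to reduce the statement to the Galois case so that Proposition \ref{propn1}(2) becomes available. First I would choose a compact connected Riemann surface $M$ together with a nonconstant (hence surjective) holomorphic map $\rho\,:\,M\,\longrightarrow\,X$ such that the composition $\phi\circ\rho\,:\,M\,\longrightarrow\,Z$ is a (ramified) Galois covering; such an $M$ exists by passing to a Galois closure. Since $E_*$ is polystable it is in particular semistable, so $\phi_*E_*$ is semistable by Proposition \ref{prop4}. Because $\phi\circ\rho$ is surjective, Theorem \ref{thm1} applied to the map $\phi\circ\rho$ and the parabolic bundle $\phi_*E_*$ on $Z$ shows that $\phi_*E_*$ is polystable if and only if $(\phi\circ\rho)^*\phi_*E_*$ is polystable. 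Hence it suffices to prove that the pullback $(\phi\circ\rho)^*\phi_*E_*$ on $M$ is polystable.

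Next I would exhibit a polystable parabolic bundle on $M$ containing $(\phi\circ\rho)^*\phi_*E_*$. Since $E_*$ is polystable, Theorem \ref{thm1} gives that $\rho^*E_*$ is polystable. As $\phi\circ\rho$ is Galois, Proposition \ref{propn1}(2) yields
\[
(\phi\circ\rho)^*(\phi\circ\rho)_*\rho^*E_*\,\cong\,\bigoplus_{\gamma\in{\rm Gal}(\phi\circ\rho)}\gamma^*\rho^*E_*\,.
\]
Each Galois automorphism $\gamma$ preserves the parabolic rank and the parabolic degree, so every summand $\gamma^*\rho^*E_*$ is polystable of the same parabolic slope as $\rho^*E_*$; therefore the direct sum on the right-hand side is polystable. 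Thus $(\phi\circ\rho)^*(\phi\circ\rho)_*\rho^*E_*$ is a polystable parabolic bundle on $M$.

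As in the proof of Proposition \ref{prop4}, $(\phi\circ\rho)^*\phi_*E_*$ is a parabolic subbundle of $(\phi\circ\rho)^*(\phi\circ\rho)_*\rho^*E_*$, and the two have the same parabolic slope by Lemma \ref{lem-n} together with Lemma \ref{lemn2}(1). The final and main step is to upgrade this to polystability of the subbundle: a parabolic subbundle of a polystable parabolic bundle having the same parabolic slope as the ambient bundle is automatically a direct summand, and hence is itself polystable. I expect this to be the crux of the argument. It follows from the fact that the semistable parabolic bundles of a fixed slope form an abelian category in which the polystable objects are exactly the semisimple ones; since the inclusion has semistable cokernel of the same slope, it is a morphism in this category, and a subobject of a semisimple object is again semisimple and splits off as a direct summand. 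Applying this with the polystable ambient bundle from the previous paragraph shows that $(\phi\circ\rho)^*\phi_*E_*$ is polystable, and then Theorem \ref{thm1} gives that $\phi_*E_*$ is polystable, completing the proof.
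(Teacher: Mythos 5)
Your proof is correct and is essentially the paper's own argument: the paper's proof of Proposition \ref{prop3} just says that the proof of Proposition \ref{prop4} goes through with semistability replaced by polystability, which is exactly the route you take (reduction to the Galois case via $\rho$, Proposition \ref{propn1}(2), the slope comparison from Lemma \ref{lem-n} and Lemma \ref{lemn2}(1), and Theorem \ref{thm1} playing the role of Lemma \ref{lemn2}(2)). Your only addition is to make explicit the crux that a parabolic subbundle of a polystable parabolic bundle with the same parabolic slope splits off as a direct summand and is hence polystable, a standard fact which the paper leaves implicit.
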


\begin{proof}
The proof is identical to the proof in
Proposition \ref{prop4} with semistability replaced by polystability.
\end{proof}

\section{Direct image of parabolic Higgs bundles and connections}
\label{section:directimage}

\subsection{Direct image of parabolic Higgs bundles}
\label{section:directHiggs}

Take a nonconstant holomorphic map $\phi\, :\, X\, \longrightarrow\, Z$ between compact connected
Riemann surfaces (as in \eqref{ep}). As in \eqref{e8}, $R\, \subset\, X$ is the ramification locus of $\phi$. Let
$$
\widehat{R}\,=\, \phi(R)
$$
be its image in $Z$. Let $\phi^{-1}(\widehat{R})_{\rm red}$ be the reduced inverse image of $R$, so
we have $R\, \subset\, \phi^{-1}(\widehat{R})_{\rm red}$.

It can be shown that there is a natural injective homomorphism of coherent analytic sheaves
\begin{equation}\label{e16}
\Phi\,\,:\,\, \phi_*K_X \, \longrightarrow\, K_Z\otimes {\mathcal O}_Z(\widehat{R})\otimes \phi_*{\mathcal O}_X
\end{equation}
which is an isomorphism over the complement $Z\setminus\widehat{R}$.
To see this first note that
$$\phi^* (K_Z\otimes {\mathcal O}_Z(\widehat{R}))\,=\,
K_X\otimes{\mathcal O}_X(\phi^{-1}(\widehat{R})_{\rm red})\, .$$
Therefore, the projection formula (see \cite[p.~124, Ex.~5.1(d)]{Ha}) gives that 
\begin{equation}\label{e17a}
\phi_* (K_X\otimes{\mathcal O}_X(\phi^{-1}(\widehat{R})_{\rm red}))\,=\,
K_Z\otimes {\mathcal O}_Z(\widehat{R})\otimes \phi_*{\mathcal O}_X\, .
\end{equation}
But $\phi_*K_X\, \subset\, \phi_* (K_X\otimes{\mathcal O}_X(\phi^{-1}(\widehat{R})_{\rm red}))$ as
$K_X\, \subset\, K_X\otimes{\mathcal O}_X(\phi^{-1}(\widehat{R})_{\rm red})$, and hence from \eqref{e17a}
we get a homomorphism $\Phi$ as in \eqref{e16}. Since
$$
(\phi_*K_X)\big\vert_{Z\setminus \widehat{R}}\,=\,
(\phi_*(K_X\otimes{\mathcal O}_X(\phi^{-1}(\widehat{R})_{\rm red})))\big\vert_{Z\setminus\widehat{R}},
$$
the homomorphism $\Phi$ is an isomorphism over $Z\setminus\widehat{R}$.

Let $E_*$ be a parabolic vector bundle on $X$ with parabolic structure over $D$. Take a parabolic Higgs field
$$\theta\, :\, E\, \longrightarrow\, E\otimes K_X(D)$$ on $E_*$. The direct image of it is a holomorphic
homomorphism
\begin{equation}\label{e20a}
\phi_*\theta\, :\, \phi_*E\, \longrightarrow\, \phi_*(E\otimes K_X(D)).
\end{equation}
Evidently, we have $D\, \subset\, \phi^{-1}(\Delta)_{\rm red}$, where $\Delta\, \subset\, Z$ is the divisor in \eqref{e9}.
So $K_X(D)\, \subset\, K_X(\phi^{-1}(\Delta)_{\rm red})$, and hence it follows that
\begin{equation}\label{e20d}
E\otimes K_X(D)\, \subset\, E\otimes K_X(\phi^{-1}(\Delta)_{\rm red})\,=\, E\otimes \phi^* K_Z(\Delta)\, .
\end{equation}
Consequently, from the projection formula we have
\begin{equation}\label{e20c}
\phi_*(E\otimes K_X(D))\, \subset\, (\phi_* E)\otimes K_Z(\Delta)\, .
\end{equation}
Therefore, $\phi_*\theta$ in \eqref{e20a} gives a holomorphic homomorphism
\begin{equation}\label{e20b}
\phi_*\theta\, :\, \phi_*E\, \longrightarrow\, (\phi_* E)\otimes K_Z(\Delta)\, .
\end{equation}

It is straightforward to check that $\phi_*\theta$ in \eqref{e20b} is a
parabolic Higgs field on the parabolic vector bundle $\phi_*E_*$.

\begin{lemma}\label{ln1}
If $\phi$ is a (ramified) Galois morphism, then for any parabolic Higgs bundle $(E_*,\, \theta)$
on $X$, the pulled back strongly parabolic Higgs bundle 
$(\phi^*(\phi_*E_*)_*,\, \phi^*\phi_*\theta)$ is isomorphic to the direct sum
$$
\bigoplus_{\gamma\in {\rm Gal}(\phi)} (\gamma^*E_*,\, \gamma^*\theta)
$$
of parabolic Higgs bundles.
\end{lemma}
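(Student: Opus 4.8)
The plan is to build on Proposition \ref{propn1}(2), which already supplies an isomorphism of the underlying parabolic vector bundles
$$
\phi^*(\phi_*E_*)_*\,\cong\,\bigoplus_{\gamma\in{\rm Gal}(\phi)}\gamma^*E_*\, ;
$$
the whole remaining task is to check that this isomorphism carries the Higgs field $\phi^*\phi_*\theta$ to $\bigoplus_{\gamma}\gamma^*\theta$. The guiding principle is that a parabolic Higgs field is a global holomorphic section of a locally free sheaf, so it is determined by its restriction to any dense Zariski open subset; I would therefore reduce the entire verification to the locus where $\phi$ is étale and all the relevant divisors are absent.

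Concretely, set $Z^\circ\,=\,Z\setminus\Delta$ with $\Delta$ as in \eqref{e9}, and $X^\circ\,=\,\phi^{-1}(Z^\circ)\,=\,X\setminus\phi^{-1}(\Delta)_{\rm red}$. Over $X^\circ$ the divisors $R$ and $D$ both disappear, and the restriction $\phi^\circ\,:\,X^\circ\,\longrightarrow\,Z^\circ$ is an étale Galois covering with group ${\rm Gal}(\phi)$, while $\theta\vert_{X^\circ}$ is an ordinary Higgs field $E\vert_{X^\circ}\,\longrightarrow\,E\vert_{X^\circ}\otimes K_{X^\circ}$. For an étale Galois covering there is a canonical, functorial isomorphism
$$
(\phi^\circ)^*(\phi^\circ_*F)\,\cong\,\bigoplus_{\gamma\in{\rm Gal}(\phi)}\gamma^*F
$$
for every coherent sheaf $F$ on $X^\circ$. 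Applying functoriality of this identification to $\theta\vert_{X^\circ}$, and using $(\phi^\circ)^*K_{Z^\circ}\,=\,K_{X^\circ}\,=\,\gamma^*K_{X^\circ}$, shows that over $X^\circ$ the homomorphism $(\phi^\circ)^*(\phi^\circ_*\theta)$ corresponds exactly to $\bigoplus_{\gamma}\gamma^*\theta$. Moreover, the isomorphism of Proposition \ref{propn1}(2) may be taken to restrict over $X^\circ$ to precisely this canonical étale identification, as it is assembled from the identification over the unramified locus.

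It then remains to upgrade this generic agreement to a global one. Both $\phi^*\phi_*\theta$ --- the extension, furnished by Proposition \ref{prop1}, of the pullback of the direct image Higgs field \eqref{e20b} --- and $\bigoplus_{\gamma}\gamma^*\theta$ are global holomorphic sections of the locally free sheaf ${\rm End}_P\big(\phi^*(\phi_*E_*)_*\big)\otimes K_X(\phi^{-1}(\Delta)_{\rm red})$. Since they coincide on the dense open subset $X^\circ$ and this sheaf is torsion-free, they coincide everywhere; equivalently, this is exactly the uniqueness of the parabolic extension recorded in Remark \ref{lemma:unique}. This yields the asserted isomorphism of parabolic Higgs bundles.

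I expect the main obstacle to be the bookkeeping at the points of $R$ and $D$: one must ensure that the two parabolic Higgs structures are genuinely \emph{equal} there and not merely generically equal, a point that in principle requires matching residues and parabolic weights at every ramification and parabolic point. This is precisely where the uniqueness of the parabolic Higgs field extension (Remark \ref{lemma:unique}) is decisive, as it replaces that delicate local computation by the already-established identification of the underlying parabolic bundles (Proposition \ref{propn1}(2)) together with the density argument above.
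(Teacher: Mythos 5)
Your proposal is correct and takes essentially the same route as the paper's own proof: it invokes Proposition \ref{propn1}(2) to identify the underlying parabolic bundles, checks that the two Higgs fields agree over $X\setminus\phi^{-1}(\Delta)_{\rm red}$ via the \'etale Galois decomposition, and concludes by density. The paper leaves the final step implicit, while you justify it through torsion-freeness and the uniqueness of the parabolic extension (Remark \ref{lemma:unique}); this is an elaboration of the same argument, not a different one.
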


\begin{proof}
{}From Proposition \ref{propn1}(2) we know that
$$
\phi^*(\phi_*E_*)_*\,=\,\bigoplus_{\gamma\in {\rm Gal}(\phi)} \gamma^*E_*.
$$
On $X\setminus \phi^{-1}(\Delta)_{\rm red}$ (see \eqref{e9}) we have
$$
(\phi^*\phi_* \theta)\big\vert_{X\setminus \phi^{-1}(\Delta)_{\rm red}}\,=\,
\bigoplus_{\gamma\in {\rm Gal}(\phi)} (\gamma^*\theta)\big\vert_{X\setminus \phi^{-1}(\Delta)_{\rm red}}.
$$
Therefore, $(\phi^*(\phi_*E_*)_*,\, \phi^*\phi_*\theta)$ is
isomorphism to
$\bigoplus_{\gamma\in {\rm Gal}(\phi)} (\gamma^*E_*,\, \gamma^*\theta)$.
\end{proof}

\begin{proposition}\label{pr1}
A parabolic Higgs bundle $(E_*,\, \theta)$ on $X$ is semistable if and only if
$(\phi_*E_*,\, \phi_*\theta)$ is semistable.
\end{proposition}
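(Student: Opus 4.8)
The plan is to follow the proof of Proposition~\ref{prop4} essentially verbatim, replacing each statement about parabolic bundles by its parabolic-Higgs analogue. Concretely, I will use Lemma~\ref{lem2} in place of Lemma~\ref{lemn2}(2), Lemma~\ref{ln1} in place of Proposition~\ref{propn1}(2), and the fact, established just before the statement, that $\phi_*\theta$ is a parabolic Higgs field on $\phi_*E_*$. Two elementary observations about Higgs-invariant subbundles glue everything together. First, a $\theta$-invariant parabolic subbundle of the same parabolic slope inside a semistable parabolic Higgs bundle is itself semistable: any of its $\theta$-invariant parabolic subbundles is again $\theta$-invariant in the ambient bundle, so inherits the slope bound. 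Second, if $\theta(F)\subset F\otimes K_X(D)$, then applying $\phi_*$ and the projection-formula inclusion as in \eqref{e20c} gives $\phi_*\theta(\phi_*F)\subset(\phi_*F)\otimes K_Z(\Delta)$, so direct images of Higgs-invariant subbundles are again Higgs-invariant.

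For the forward implication I would choose $\rho\colon M\to X$ with $\phi\circ\rho$ (ramified) Galois, exactly as in Proposition~\ref{prop4}. By Lemma~\ref{lem2} the pullback $(\rho^*E_*,\,\rho^*\theta)$ is semistable, hence so is each $\gamma^*(\rho^*E_*,\,\rho^*\theta)$ for $\gamma\in\mathrm{Gal}(\phi\circ\rho)$, these all having the common slope $\deg(\rho)\cdot\mu(E_*)$. Their direct sum is therefore a semistable parabolic Higgs bundle, and by Lemma~\ref{ln1} it is isomorphic to $\big((\phi\circ\rho)^*(\phi\circ\rho)_*\rho^*E_*,\,(\phi\circ\rho)^*(\phi\circ\rho)_*\rho^*\theta\big)$. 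Now $\big((\phi\circ\rho)^*\phi_*E_*,\,(\phi\circ\rho)^*\phi_*\theta\big)$ sits inside this direct sum as a $\theta$-invariant parabolic subbundle of the same parabolic slope, the slope equality coming from Lemma~\ref{lem-n} and Lemma~\ref{lemn2}(1) just as in Proposition~\ref{prop4}. By the first observation it is semistable, and then Lemma~\ref{lem2} applied to the map $\phi\circ\rho$ forces $(\phi_*E_*,\,\phi_*\theta)$ to be semistable.

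For the converse I argue by contradiction. If $(E_*,\,\theta)$ is not semistable, let $F_*\subset E_*$ be the maximal destabilizing $\theta$-invariant parabolic subbundle, that is, the first nonzero term of the Harder--Narasimhan filtration in the category of parabolic Higgs sheaves; it satisfies $\theta(F)\subset F\otimes K_X(D)$ and $\mu(F_*)>\mu(E_*)$. By the second observation, $\phi_*F_*$ is a $\phi_*\theta$-invariant parabolic subbundle of $\phi_*E_*$, and by Lemma~\ref{lem-n} together with the relation $\mu(\phi_*V_*)=\mu(V_*)/\deg(\phi)$ we get $\mu(\phi_*F_*)>\mu(\phi_*E_*)$. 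This contradicts the semistability of $(\phi_*E_*,\,\phi_*\theta)$, so $(E_*,\,\theta)$ must be semistable.

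The only step needing genuine care, and hence the main obstacle, is the identification in the forward direction of $\big((\phi\circ\rho)^*\phi_*E_*,\,(\phi\circ\rho)^*\phi_*\theta\big)$ as a Higgs subbundle of $\bigoplus_\gamma \gamma^*(\rho^*E_*,\,\rho^*\theta)$ with compatible Higgs field. The underlying parabolic subbundle inclusion is the one already used in Proposition~\ref{prop4}, arising from the natural map $\phi_*E_*\hookrightarrow(\phi\circ\rho)_*\rho^*E_*$; what must be checked is that under this inclusion the Higgs field $(\phi\circ\rho)^*\phi_*\theta$ coincides with the restriction of the Higgs field on the direct sum. Since both agree over the dense open complement of the relevant parabolic and ramification divisors, and the extension of the pulled-back Higgs field across those divisors is unique (Remark~\ref{lemma:unique}), the two Higgs fields agree everywhere, and the required compatibility follows.
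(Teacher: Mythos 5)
Your proposal is correct and follows essentially the same route as the paper: the paper's proof of Proposition~\ref{pr1} simply says to repeat the argument of Proposition~\ref{prop4} with the obvious modifications, invoking Lemma~\ref{lem2} in place of Lemma~\ref{lemn2}(2) and Lemma~\ref{ln1} in place of Proposition~\ref{propn1}(2), which is exactly what you do. Your write-up merely makes explicit the details the paper leaves implicit (Higgs-invariance of the relevant subbundles, the Harder--Narasimhan argument in the Higgs category, and the compatibility of the Higgs fields via generic agreement and uniqueness of the extension), all of which are the intended ``obvious modifications.''
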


\begin{proof}
In view of Lemma \ref{lem2} and Lemma \ref{ln1}, the proof of Proposition \ref{prop4} gives a proof
after the obvious modifications are made.
\end{proof}

There is a natural homomorphism
$$P\, :\, \phi_*{\mathcal O}_X\, \longrightarrow\, {\mathcal O}_Z$$ that simply sends any holomorphic
function $\beta$ on $\phi^{-1}(U)$, where $U\, \subset\, Z$ is an open subset, to the function on $U$ whose value at
any $u\, \in\, U$ is $\sum_{x \in \phi^{-1}(u)} \beta(x)\, \in\, \mathbb C$; here $\phi^{-1}(u)$ denotes the inverse image
with multiplicities. Therefore, for any holomorphic vector bundle $V$ on $Z$, we have a natural homomorphism
\begin{equation}\label{eg}
\phi_*\phi^* V\,=\, V\otimes \phi_*{\mathcal O}_X\, \stackrel{P}{\longrightarrow}\, V\otimes {\mathcal O}_Z \,=\, V\, .
\end{equation}

Consider $\Delta$ in \eqref{e9}. Recall that
$$
K_X(D)\, \subset\, \phi^* K_Z(\Delta)
$$
(see \eqref{e20d}). Hence $K_X(D)^{\otimes i}\, \subset\, \phi^* K_Z(\Delta)^{\otimes i}$ for all $i\, \geq\, 1$.
Therefore, from \eqref{eg} we have a homomorphism
$$
\phi_*(K_X(D)^{\otimes i}) \, \subset\, \phi_*(\phi^* K_Z(\Delta)^{\otimes i})\,\longrightarrow\,
K_Z(\Delta)^{\otimes i}\, .
$$
Let
$$
\Psi_i\,\,:\,\,\phi_*(K_X(D)^{\otimes i})\, \longrightarrow\,K_Z(\Delta)^{\otimes i}
$$
be this homomorphism.

\begin{lemma}\label{lemh2}
For any parabolic Higgs bundle $(E_*,\, \theta)$ of rank $r$ on $X$,
$$
\left(\bigoplus_{i=1}^r \Psi_i\right)\circ {\mathcal H}(E_*,\, \theta)\,=\, {\mathcal H}(f_*E_*,\, f_*\theta)\, ,
$$
where $\mathcal H$ is the Hitchin map as in Lemma \ref{lemh}.
\end{lemma}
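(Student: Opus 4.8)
The plan is to reduce the statement to the analogous compatibility already recorded for pullback in Lemma \ref{lemh}, combined with the Galois-descent structure provided by Proposition \ref{propn1}(1)--(2) and Lemma \ref{ln1}. The Hitchin map is defined on characteristic coefficients, so the real content is that the direct image $\phi_*\theta$ constructed in \eqref{e20b} has characteristic coefficients which are precisely the images under the maps $\Psi_i$ of the characteristic coefficients of $\theta$. Since the $\Psi_i$ are obtained from the trace homomorphism $P\,:\,\phi_*\mathcal{O}_X\to\mathcal{O}_Z$ in \eqref{eg}, the heart of the matter is identifying $\mathrm{trace}\bigl((\phi_*\theta)^i\bigr)$ with $\Psi_i\bigl(\phi_*(\mathrm{trace}(\theta^i))\bigr)$.

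First I would verify the identity away from the branch locus, that is, over $Z\setminus\widehat{R}$ (equivalently over the complement of $\Delta$). There $\phi$ is an unramified covering, so $\phi_*E$ is locally a direct sum of copies of $E$ indexed by the sheets, and $\phi_*\theta$ acts block-diagonally across these sheets. For each fixed power $i$, the endomorphism $(\phi_*\theta)^i$ is again block-diagonal, and taking the fiberwise trace over $Z$ sums the traces over the sheets. This is exactly the sheet-summation built into the homomorphism $P$ in \eqref{eg}, so over $Z\setminus\widehat{R}$ one gets $\mathrm{trace}\bigl((\phi_*\theta)^i\bigr)=\Psi_i\bigl(\phi_*\mathrm{trace}(\theta^i)\bigr)$ directly from the local product description of the unramified direct image. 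Both sides are holomorphic sections of $K_Z(\Delta)^{\otimes i}$ over all of $Z$, and they agree on the dense open set $Z\setminus\widehat{R}$; since $Z$ is a smooth connected curve and the target is locally free, agreement on a dense open set forces agreement everywhere, which upgrades the identity to the whole of $Z$.

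The main obstacle is the behavior over the ramification points in $\widehat{R}$, where the clean sheet-wise decomposition of $\phi_*E$ fails and $\phi_*\theta$ no longer acts block-diagonally in an obvious way. Rather than analyzing the ramified fibers directly, the cleanest route is to bypass them entirely by the density argument just described; the construction of $\Psi_i$ via the projection formula in \eqref{e20d}--\eqref{e20c} was arranged precisely so that both sides extend holomorphically across $\widehat{R}$, so no local computation at ramification is needed. If one prefers an argument that does not invoke density, I would instead reduce to the Galois case: choose $\rho\,:\,M\to X$ with $\phi\circ\rho$ Galois as in the proof of Proposition \ref{prop4}, pull back using Lemma \ref{lemh} and the functoriality of the Hitchin map under composition, and then use Lemma \ref{ln1} together with Proposition \ref{propn1}(2) to identify $\phi^*\phi_*\theta$ with $\bigoplus_{\gamma}\gamma^*\theta$; the trace over the Galois group then realizes the sheet-summation in $P$ intrinsically, and descending the resulting identity from $M$ back to $Z$ yields the claim. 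Either way the verification is routine once the off-branch identity is established, so I expect the write-up to be short.
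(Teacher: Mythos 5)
Your proposal is correct and is essentially the paper's argument spelled out: the paper's proof simply appeals to the construction of $\Psi_i$ (sheet-summation via the homomorphism $P$) and the definition of the Hitchin map as traces of powers, which is precisely your block-diagonal computation of $\mathrm{trace}\bigl((\phi_*\theta)^i\bigr)$ over the unramified locus together with the standard identity-theorem extension across $\widehat{R}$. The alternative Galois-descent route you sketch is not needed; the density argument suffices and matches the paper's intent.
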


\begin{proof}
This follows immediately from the above construction of $\Psi_i$ and the definition of the Hitchin map.
\end{proof}

\subsection{Direct image of parabolic connections}
\label{section:directConn}

Let
$$
{\nabla}\,\,:\,\, E\, \longrightarrow\, E\otimes K_X(D)
$$
be a connection on $E_*$. The direct image of ${\nabla}$ for the map $\phi$ produces a holomorphic
differential operator
\begin{equation}\label{e20}
\phi_*{\nabla}\,\, :\,\,\phi_*E \,\longrightarrow\, \phi_*(E\otimes K_X(D)) .
\end{equation}
Now using \eqref{e20c}, the differential operator in \eqref{e20} gives a holomorphic differential operator
\begin{equation}\label{e21}
\phi_*{\nabla}\,\, :\,\,\phi_*E \,\longrightarrow\, (\phi_* E)\otimes K_Z(\Delta)\, .
\end{equation}

It is straightforward to check that the differential operator $\phi_*{\nabla}$ in \eqref{e21}
defines a connection on the parabolic vector bundle $\phi_*E$ constructed in Section \ref{se4}.

\begin{proposition}\label{prop:semistableConn}
A parabolic connection $(E_*,\nabla)$ on $X$ is semistable if and only if $(\phi_*E_*,\phi_*\nabla)$ is semistable.
\end{proposition}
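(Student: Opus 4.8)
The plan is to mimic the proof of Proposition \ref{prop4}, which established the analogous statement for parabolic bundles, now in the presence of a connection. The strategy there was to reduce to the Galois case by passing to a common cover, exploit the decomposition of $\phi^*\phi_*$ into a direct sum over the Galois group, compare parabolic slopes using the degree-preservation lemmas, and then descend semistability. I would follow exactly this skeleton, carrying the connection along at each stage and checking that every subbundle under consideration is $\nabla$-invariant.

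First I would record the connection analogue of Proposition \ref{propn1}(2) and Lemma \ref{ln1}: if $\phi$ is (ramified) Galois, then the pulled-back connection $(\phi^*(\phi_*E_*)_*,\,\phi^*\phi_*\nabla)$ is isomorphic to the direct sum $\bigoplus_{\gamma\in\operatorname{Gal}(\phi)}(\gamma^*E_*,\,\gamma^*\nabla)$ of parabolic connections; this is proved as in Lemma \ref{ln1} by checking the identification over the complement $X\setminus\phi^{-1}(\Delta)_{\mathrm{red}}$ and invoking uniqueness of the extension across the parabolic divisor. I would also need the connection analogue of Lemma \ref{lemn2}(2), namely that $(E_*,\,\nabla)$ is semistable if and only if its pullback is, where ``subbundle'' throughout now means a $\nabla$-invariant parabolic subbundle. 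With these in hand, the forward direction runs as in Proposition \ref{prop4}: assuming $(E_*,\,\nabla)$ semistable, choose $\rho\,:\,M\,\longrightarrow\,X$ so that $\phi\circ\rho$ is Galois, pull back to get a semistable parabolic connection on $M$, identify $(\phi\circ\rho)^*(\phi\circ\rho)_*\rho^*(E_*,\,\nabla)$ as a semistable direct sum, observe that $(\phi\circ\rho)^*\phi_*(E_*,\,\nabla)$ is a $\nabla$-invariant parabolic subbundle of the same slope (using Lemma \ref{lem-n} and Lemma \ref{lemn2}(1)), hence semistable, and descend.

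For the converse I would argue by contradiction: if $(\phi_*E_*,\,\phi_*\nabla)$ is semistable but $(E_*,\,\nabla)$ is not, take the first nonzero term $F_*$ of the Harder--Narasimhan filtration of $(E_*,\,\nabla)$ in the category of parabolic connections, i.e.\ the maximal $\nabla$-invariant semistable parabolic subbundle with $\mu(F_*)\,>\,\mu(E_*)$. Since $F$ is preserved by $\nabla$, its direct image $\phi_*F$ is preserved by $\phi_*\nabla$, so $(\phi_*F_*,\,\phi_*\nabla)$ is a genuine parabolic sub-connection of $(\phi_*E_*,\,\phi_*\nabla)$; by Lemma \ref{lem-n} the strict slope inequality is preserved, contradicting semistability of the direct image.

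The main obstacle I expect is the need for a \emph{$\nabla$-invariant} Harder--Narasimhan filtration and the fact that the relevant subbundles destabilizing the various pullbacks and direct images are automatically $\nabla$-invariant. In the Galois step of the forward direction, one must know that the maximal semistable subbundle of $(\phi\circ\rho)^*(\phi\circ\rho)_*\rho^*(E_*,\,\nabla)$---when this bundle fails to be semistable---is canonical, hence preserved both by the Galois action and by the connection, so that it descends to a $\nabla$-invariant parabolic subbundle of $E_*$. This uniqueness of the maximal destabilizing subobject in the category of parabolic connections, together with the commutation of $\phi_*$ and $\phi^*$ with the connection, is the technical heart; once it is granted, every remaining step is a routine transcription of Proposition \ref{prop4} with the phrase ``parabolic subbundle'' replaced by ``$\nabla$-invariant parabolic subbundle''.
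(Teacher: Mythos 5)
Your proposal is correct and takes essentially the same route as the paper: the paper's own proof of Proposition \ref{prop:semistableConn} is just the one-line remark that the argument of Propositions \ref{prop4} and \ref{pr1} carries over, and your expansion (Galois reduction, the connection analogues of Lemma \ref{ln1} and Lemma \ref{lemn2}(2), slope comparison via Lemma \ref{lem-n}, and the $\nabla$-invariant Harder--Narasimhan subobject for the converse) is precisely the intended transcription.
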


\begin{proof}
The proof is analogous to the proof of Proposition \ref{prop4} and Proposition \ref{pr1}.
\end{proof}

\section{Compatibility with nonabelian Hodge theory}\label{section:naht}

Let us start by recalling the left-continuous filtration formalism for parabolic bundles used by Simpson for the noncompact nonabelian Hodge 
correspondence \cite{Si}. Given a parabolic bundle $(E,\,\{E_i^j\},\,\{\alpha_i^j\})$ we can define a collection of vector bundles $E_i^\alpha$ for 
each $i$ and $\alpha\in \mathbb{R}$ as follows:
\begin{itemize}
\item If $\alpha\,=\,\alpha_i^j$, then $E_i^{\alpha_i^j}$ is the subsheaf of $E$ that fits in the short exact sequence
$$0\,\longrightarrow\, E_i^{\alpha_i^j} \,\longrightarrow\, E \,\longrightarrow\, E_x/E_i^j \,\longrightarrow\, 0.$$

\item If $\alpha_i^j \,\le\, \alpha \,< \,\alpha_i^{j+1}$ or $\alpha_i^n \,\le\, \alpha\,<\,\alpha_i^1+1$. then $E_i^\alpha\,=\,E_i^{\alpha_i^j}$.

\item $E_i^{\alpha+1}\,=\,E_i^\alpha (-x_i)$.
\end{itemize}
The last equation can be also used to extend the definition of $E_i^\alpha$ for $\alpha\,<\,0$. If $$j_i\,:\,U_i
\,=\,X\backslash\{x_i\} \,\hookrightarrow\, X$$ is the inclusion of complement of the parabolic point in $X$ into $X$, then clearly the
vector bundles $E_i^\alpha$ provide a left continuous decreasing filtration of the quasi-projective sheaf
$$\bigcup_{\alpha\in \mathbb{R}} E_i^\alpha = (j_i)_* E|_{U_i} =: E(\infty \cdot x_i).$$

Under this formalism, a parabolic Higgs bundle can be described as an ${\mathcal O}_X$-linear map
$$\theta: E\longrightarrow E\otimes K_X(D)$$
such that
$$\theta(E_i^\alpha)\,\subseteq\, E_i^\alpha\otimes K_X(D) \quad\, \forall\ \ \alpha \,\in\, \mathbb{R}$$
and a parabolic connection can be described as a map
$$\nabla: E\longrightarrow E\otimes K_X(D)$$
satisfying the Leibniz rule \eqref{e4} together with the condition 
$$\nabla(E_i^\alpha)\,\subseteq\, E_i^\alpha\otimes K_X(D) \quad \,\forall\ \ \alpha \,\in\, \mathbb{R}.$$

On the other hand, given a holomorphic vector bundle $E$ on $X$ and a parabolic point $x_i\in D$, suppose that $E|_{X\backslash D}$
is given an acceptable metric $K$ \cite{Si}, in the sense that the curvature $R_K$ of the metric connection of $K$ satisfies the
following bound around $x_i$:
$$|R_K| \,\le \,f + \frac{C}{r^2(\log r)^2}\, ,$$
where $r$ is the radial distance function from $x_i$ and $f$ is some $L^p$ function. Then, by \cite[Proposition 3.1]{Si}, the metric $K$
induces a filtration of subsheaves of $E(\infty\cdot x_i)$ as follows: $E_i^\alpha$ is the subsheaf which coincides with $E$ over $U_i
\,=\,X\backslash\{x_i\}$, but whose stalk at $x_i$ is formed by sections $e$ of $E$ over a punctured disk around $x_i$ which satisfy the growth condition
$$|e|_K\,\le\, Cr^{\alpha-\varepsilon} \ \ \quad \forall\ \ \varepsilon\,>\,0.$$

When the metric $K$ is the metric of a tame harmonic bundle, these sheaves $E^\alpha_i$ coincide with the parabolic filtrations of the
corresponding parabolic Higgs bundle or parabolic connection. Following \cite{Si}, recall that a harmonic bundle is quadruple
$(\mathcal{E},\,\mathcal{D}_\theta,\,\mathcal{D}_\nabla,\,K)$ consisting of
\begin{itemize}
\item a $\mathcal{C}^\infty$-vector bundle $\mathcal E$ on $U\,=\,X\backslash D$,

\item differential operators $\mathcal{D}_\theta,\mathcal{D}_\nabla\,:\,\mathcal{E}\,\longrightarrow\, \mathcal{E}\otimes \Omega_U^{1,1}$, and

\item a hermitian metric $K$ on $\mathcal{E}$
\end{itemize}
such that the following conditions are satisfied:
\begin{enumerate}
\item $\mathcal{D}_\theta$ splits into $(0,\,1)$ and $(1,\,0)$ parts as $\mathcal{D}_\theta\,=\,\overline{\partial}_\theta+\theta$,
where $\overline{\partial}_\theta$ is a holomorphic structure on $\mathcal{E}$ and $\theta$ is a holomorphic Higgs field on $(\mathcal{E},
\,\overline{\partial}_\theta)$.

\item $\mathcal{D}_\nabla$ splits into $(0,\,1)$ and $(1,\,0)$ parts as $\mathcal{D}_\nabla\,=\,\overline{\partial}_\nabla+\nabla$, where
$\overline{\partial}_\nabla$ is a holomorphic structure on $\mathcal{E}$ and $\nabla$ is a holomorphic connection on
$(\mathcal{E},\,\overline{\partial}_\nabla)$.

\item Moreover, $\overline{\partial}_\nabla \,=\, \overline{\partial}_\theta+\overline{\theta}$ and $\nabla\,=\,\partial_\theta+
\overline{\theta}$, where
$$(\partial_\theta u,\, v)_K+(u,\, \overline{\partial}_\theta v)_K\,=\,\partial(u,\,v)_K$$
$$(u,\,\theta v)\,=\,(\overline{\theta}u,\,v)$$
for each pair of local sections $u$ and $v$ of $\mathcal{E}$.

\item The curvature and pseudo-curvature of the metric are zero: $\mathcal{D}_\theta^2\,=\,\mathcal{D}_\nabla^2\,=\,0$.
\end{enumerate}

Equivalently, \cite[\S~1]{Si}, the data of a harmonic bundle is equivalent to providing a representation of the fundamental group 
$\pi_1(U)\longrightarrow \operatorname{GL}(r,\mathbb{C})$ together with an equivariant harmonic map $\widetilde{U}\,\longrightarrow\,
\operatorname{GL}(r,\mathbb{C})/{\rm U}(r)$, where $\widetilde{U}$ is the universal cover of $U\,=\,X\backslash D$ and $r$ is the rank of the bundle.

A harmonic bundle is tame if the singularities of the Higgs field (or equivalently, of the connection) are at most logarithmic. In that case, the metric $K$ will be acceptable \cite[Theorem 4]{Si}.

Given a tame harmonic bundle $(\mathcal{E},\,\mathcal{D}_\theta,\,\mathcal{D}_\nabla,\,K)$, let $E_{\theta,i}^\alpha$ for $\alpha\,\in\,
\mathbb{R}$ 
be the set of sheaves indexed by $\mathbb{R}$ obtained by applying the previous construction to the holomorphic bundle 
$(\mathcal{E},\,\overline{\partial}_\theta)$ and the metric $K$ around the point $x_i$. Analogously, let $E_{\nabla,i}^\alpha$ be the sheaves 
obtained applying the previous construction to the holomorphic bundle $(\mathcal{E},\,\overline{\partial}_\nabla)$ and the metric $K$ around $x_i$. 
Then define $E_\theta$ and $E_\nabla$ as the holomorphic vector bundles obtained gluing together the bundles $E_{\theta,i}^0$ and 
$E_{\nabla,i}^0$ for all parabolic points respectively.

By Simpson's correspondence \cite[p.~755, Main Theorem]{Si}, there is an equivalence between the categories of tame harmonic bundles, direct 
sums of stable parabolic Higgs bundles and direct sums of stable parabolic connections. In particular, for each algebraic stable parabolic 
Higgs bundle $(E,\,E_*,\,\theta)$ with $E\,=\,(\mathcal{E},\,\overline{\partial}_\theta)$ there exists a harmonic metric $K$ and a stable parabolic 
connection $(E',\,E'_*,\,\nabla)$ on $E'\,=\,(\mathcal{E},\,\overline{\partial}_\nabla)$ such that $(\mathcal{E}|_U,\, 
\overline{\partial}_\theta+\theta,\,\overline{\partial}_\nabla+\nabla,\,K)$ is a tame harmonic bundle. Analogously, any parabolic connection 
extends to a compatible tame harmonic bundle.

Let us prove that these constructions are compatible with the pullbacks and direct images described earlier.

\begin{theorem}
\label{thm:pullbackNAHT}
Let $(X,D)$ be a marked curve. Let $f\,:\,Y\,\longrightarrow\, X$ be a nonconstant holomorphic map of Riemann surfaces, and
let $B\,=\,f^{-1}(D)_{\operatorname{red}}\,\subset \,Y$. Let $(E,\,E_*,\,\theta)$ be a parabolic Higgs bundle, and let $(E',\,E'_*,\,\nabla)$
be a parabolic connection on $(X,\,D)$ induced by the same tame harmonic bundle $(\mathcal{E},\,\mathcal{D}_\theta,\,\mathcal{D}_\nabla,
\,K)$. Then the pullback $(f^*\mathcal{E},\,f^*\mathcal{D}_\theta,\,f^*\mathcal{D}_\nabla,\,f^*K)$ is a tame harmonic bundle on $(Y,\,B)$
giving a correspondence between the pullbacks $f^*(E,\,E_*,\,\theta)$ and $f^*(E',\,E'_*,\,\nabla)$ defined in Section \ref{section:pullback}.
\end{theorem}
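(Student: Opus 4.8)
The plan is to verify, in the following order, that the pulled-back quadruple is a harmonic bundle, that it remains tame and acceptable, and finally that the parabolic Higgs bundle and parabolic connection it induces on $(Y,\,B)$ coincide with the algebraic pullbacks constructed in Section \ref{section:pullback}.

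First I would check that the four defining conditions of a harmonic bundle are preserved under $f^*$. Since $f$ is holomorphic, $f^*\overline{\partial}_\theta$ and $f^*\overline{\partial}_\nabla$ are again holomorphic structures on $f^*\mathcal{E}$, the field $f^*\theta$ is holomorphic for $f^*\overline{\partial}_\theta$ (and similarly for $\nabla$), and the adjunction identities in condition (3) are pointwise $\mathbb{C}$-(anti)linear relations between the metric and the operators, hence they pull back verbatim once $K$ is replaced by $f^*K$. For condition (4), one has $f^*(\mathcal{D}_\theta^2)\,=\,(f^*\mathcal{D}_\theta)^2$ and $f^*(\mathcal{D}_\nabla^2)\,=\,(f^*\mathcal{D}_\nabla)^2$, because the curvature of a pulled-back connection is the pullback of the curvature; this remains valid at the ramification points of $f$ lying over $X\setminus D$, since the flatness condition is an identity between $(1,1)$-forms and the pullback of $2$-forms is functorial. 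Thus $(f^*\mathcal{E},\,f^*\mathcal{D}_\theta,\,f^*\mathcal{D}_\nabla,\,f^*K)$ is a harmonic bundle on $Y\setminus B$.

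Next I would establish tameness and acceptability near a point $y\in B$ with $f(y)\,=\,x_i$ and multiplicity $m$. In local coordinates $w$ at $y$ and $z$ at $x_i$ with $z\,=\,w^m$, a logarithmic term $A\,\frac{dz}{z}$ of $\theta$ or $\nabla$ pulls back to $mA\,\frac{dw}{w}$, so the singularities stay logarithmic and tameness is preserved, with the residue scaled by $m$ (consistently with the residue computation in Section \ref{section:pullbackConnection}). For acceptability, the radial distances satisfy $r_X\,=\,r_Y^{m}$, hence $\log r_X\,=\,m\log r_Y$; writing the curvature as $R_K\,=\,g\,dz\wedge d\overline{z}$ gives $f^*R_K\,=\,(g\circ f)\,m^2|w|^{2(m-1)}\,dw\wedge d\overline{w}$, and substituting the bound $|g|\,\le\,C/(r_X^2(\log r_X)^2)$ together with $r_X\,=\,r_Y^{m}$ yields exactly $|f^*R_K|\,\le\,C/(r_Y^2(\log r_Y)^2)$ up to an $L^p$ term, the latter pulling back to an $L^p$ function since $f$ is finite. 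So $f^*K$ is acceptable and the pulled-back harmonic bundle is tame.

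Finally I would match the parabolic data. By the local decomposition into parabolic line bundles used in Section \ref{se3}, it suffices to treat the rank-one case near $y$: if $e_0$ is a holomorphic frame of the weight-$\alpha$ line bundle with $|e_0|_K\,\sim\,r_X^{\alpha}$, then $|f^*e_0|_{f^*K}\,\sim\,r_Y^{m\alpha}$, so the growth-condition filtration attached to $f^*K$ assigns $f^*e_0$ the real weight $m\alpha$; normalizing into $[0,\,1)$ replaces the frame by $w^{-\lfloor m\alpha\rfloor}f^*e_0$, which is exactly a frame of $(f^*E)\otimes\mathcal{O}_Y(\lfloor m\alpha\rfloor\,y)$ carrying weight $m\alpha-\lfloor m\alpha\rfloor$. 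This reproduces the underlying bundle and weights of $f^*E_*$ from \eqref{eq:parpullback}, and applying the identical computation to the two holomorphic structures $\overline{\partial}_\theta$ and $\overline{\partial}_\nabla$ identifies the parabolic bundles induced by $f^*K$ with $f^*E_*$ and $f^*E'_*$; since $f^*\theta$ and $f^*\nabla$ extend uniquely to these parabolic pullbacks (Remark \ref{lemma:unique} and its connection analogue), the induced Higgs field and connection are the algebraic pullbacks. I expect the main obstacle to be the acceptability estimate: one must check that the delicate Poincar\'e-type curvature bound survives the branched substitution $z\,=\,w^m$, where the multiplicity-$m$ scaling of the residue and the change of radial variable have to combine precisely; once that is in place, the weight-matching follows cleanly from the relation $r_X\,=\,r_Y^{m}$.
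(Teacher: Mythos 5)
Your first two steps are correct, and they in fact take a more self-contained route than the paper: for harmonicity the paper passes through the equivalence with (representation, equivariant harmonic map) and uses that a harmonic map composed with a holomorphic map is harmonic, while you verify the four axioms directly under pullback; for acceptability the paper simply cites Simpson, while your computation that the Poincar\'e-type term $C/(r_X^2(\log r_X)^2)$ transforms exactly into $C/(r_Y^2(\log r_Y)^2)$ under $z=w^m$ (using $\log r_X=m\log r_Y$) is correct, modulo the routine check that the Jacobian factor $m^2|w|^{2(m-1)}$ keeps the $L^p$ part in $L^p$. The tameness and residue-scaling remarks also match the paper.

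The genuine gap is in your final step, the identification of the $f^*K$-growth filtration with the parabolic structure of $f^*E_*$. You reduce to rank one ``by the local decomposition into parabolic line bundles used in Section \ref{se3}'', but that decomposition is purely algebraic and carries no metric information: the summands $L(j,k)$ need not be even asymptotically orthogonal for $K$. The triangle inequality only gives that the direct sum of the growth filtrations of the summands is \emph{contained} in the growth filtration of $f^*E$; the reverse inclusion -- that a section $\sum_j h_j\, f^*e_j$ of small $f^*K$-norm has all of its components of small norm -- is exactly the hard point, since cancellation between non-orthogonal summands is possible a priori, and it does not follow from the rank-one case. This is why the paper argues differently: it expands an arbitrary local section of $f^*\mathcal{E}$ near $y_{i,j}$ as $e'=\sum_k y^k\, f^*e_k$ with $e_k$ local sections of $\mathcal{E}$ on $X$, and translates the growth condition on $e'$ into growth conditions on each $e_k$ separately (this term-by-term passage is legitimate because $f^*K$ is invariant under $y\mapsto \zeta y$ with $\zeta^{m_{i,j}}=1$, so each mode $y^k f^*e_k$ is recovered from $e'$ by averaging over such rotations); the filtration of the full bundle $f^*\mathcal{E}$ is thereby expressed directly in terms of the given filtration of the full bundle $\mathcal{E}$, with no choice of splitting, and only \emph{afterwards} is an (algebraic, filtered) splitting into line bundles used to compare with \eqref{eq:parpullback}. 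To repair your argument you would either have to invoke Simpson's norm estimates to produce a $K$-adapted local frame in which the metric is mutually bounded with a diagonal model (so that projections onto the summands are bounded up to harmless logarithmic factors), or run the mode-expansion argument above -- at which point the rank-one reduction is no longer doing any work. The remainder of your proof (uniqueness of the extension of $f^*\theta$ and $f^*\nabla$ via Remark \ref{lemma:unique} and its analogue for connections) agrees with the paper and is fine once the filtration identification is established.
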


\begin{proof}
Denote $U\,=\,X\backslash D$ and $V\,=\,Y\backslash B$. Let $$\rho\,:\,\pi_1(U)\,\longrightarrow\, \operatorname{GL}(r,\mathbb{C})\ \,
\text{ and }\,\ 
\widetilde{K}\,:\,\widetilde{U}\,\longrightarrow\, \operatorname{GL}(r,\mathbb{C})/{\rm U}(r)$$ respectively be the representation of the fundamental group
and the harmonic map to the symmetric space associated to $(\mathcal{E},\,\mathcal{D}_\theta,\,\mathcal{D}_\nabla,\,K)$. The map $f$
induces a homomorphism $\pi_1(f)\,:\,\pi_1(V)\,\longrightarrow\, \pi_1(U)$ and a holomorphic map
$\widetilde{f}\,:\widetilde{V}\,\longrightarrow\, \widetilde{U}$. Then the composition
$\rho\circ \pi_1(f)\,:\,\pi_1(V)\,\longrightarrow \,\operatorname{GL}(r,\mathbb{R})$ is a representation of $\pi_1(V)$ and, as the
composition of a harmonic map with a holomorphic map is harmonic, the map $\widetilde{K}\circ \widetilde{f}\,:\,
\widetilde{V}\,\longrightarrow\, \operatorname{GL}(r,\mathbb{C})/{\rm U}(r)$ is harmonic. It is then clear by construction that the pair $(\rho\circ \pi_1(f),\, \widetilde{K}\circ
\widetilde{f})$ correspond to the pullback $(f^*\mathcal{E},\,f^*\mathcal{D}_\theta,\,f^*\mathcal{D}_\nabla,\,f^*K)$, so it is a
harmonic bundle.

If $\mathcal{D}_\theta
\,=\,\overline{\partial}_\theta+\theta$ and $\mathcal{D}_\nabla\,=\,\overline{\partial}_\nabla + \nabla$ are the splittings in $(0,1)$ and $(1,0)$ parts of the operators of the original harmonic bundle, it is clear that
$$f^*\mathcal{D}_\theta \,=\, f^*\overline{\partial}_\theta + f^*\theta \, , \quad \quad f^*\mathcal{D}_\nabla
\,=\,f^*\overline{\partial}_\nabla+ f^*\nabla\, ,$$
so
$$(f^*\mathcal{E},\,f^*\overline{\partial}_\theta)\,=\,f^*(\mathcal{E},\,\overline{\partial}_\theta)\, ,
\quad \quad (f^*\mathcal{E},\,f^*\overline{\partial}_\nabla)\,=\,f^*(\mathcal{E},\,\overline{\partial}_\nabla)\, .$$

Moreover, by \cite[Theorem 2]{Si}, the metric $f^*K$ is acceptable for $\overline{\partial}_\theta$, so we can analyze the
filtration induced by the metric $K$ on the vector bundle $f^*(\mathcal{E},\,\overline{\partial}_\theta)$ (it is also acceptable for
$\overline{\partial}_\nabla$ and the computation for the connection will be analogous) at a point $y_{i,j}\,
\in\, f^{-1}(x_i)\,\subset\, B$ of multiplicity $m_{i,j}$. Let $x$ be a local coordinate around $x_i$, and $y$ be a local coordinate
around $y_{i,j}$, so that the map $f$ is locally described as $x\,=\,y^{m_{i,j}}$ around $y_{i,j}$. Any local section $e'$ of
$f^*(\mathcal{E},\,\overline{\partial}_\theta)$ in a punctured disc around $y_{i,j}$ can then be described as
$$e'\,=\,\sum_{k=-\infty}^\infty y^k\cdot f^*e_k\, ,$$
where $e_k$ is a local section of $(\mathcal{E},\,\overline{\partial}_\theta)$ defined on
a punctured neighborhood of $x_i$. Let $r_Y$ denote the radius around $y_{i,j}$ and $r_X$ denote the radius around $x_i$. Then
$$|e'|_{f^*K} \,\le\, \sum_{k=-\infty}^\infty r_Y^k |e_k|_K\, .$$
In particular, the sections $e'$ with
$$|e'|_{f^*K}\,\le\, Cr_Y^{\alpha-\varepsilon}\,=\,Cr_X^{\frac{\alpha-\varepsilon}{m_{i,j}}}\,=\,Cr_X^{\frac{\alpha}{m_{i,j}}-\varepsilon'}$$
correspond to the sections of the form $e'\,=\,\sum_k y^k\cdot f^*e_k$ such that $r_Y^k|e_k|_K \,\le\,
Cr_X^{\frac{\alpha}{m}-\varepsilon'}$ for each $k$. We have
$$\left\{r_Y^k|e_k|_K \,\le \,Cr_X^{\frac{\alpha}{m_{i,j}}-\varepsilon'}\ \, \forall\ \varepsilon'\,>\,0\right\}\ \quad \Leftrightarrow\ \quad
\left\{|e_k|_K \,\le\, Cr_X^{\frac{\alpha-k}{m_{i,j}}-\varepsilon'}\ \,\forall \varepsilon'\,>\,0\right\}
$$
$$
\Leftrightarrow\ \quad
\left\{e_k \,\in\, (\mathcal{E},\,\overline{\partial}_\theta)_i^{\frac{\alpha-k}{m_{i,j}}}\right\}.$$
Thus, we obtain:
$$\left(f^*(\mathcal{E},\,\overline{\partial}_\theta)\right)_{y_{i,j}}^\alpha \,=\, \sum_{k=-\infty}^\infty \mathcal{O}_Y(-ky_{i,j})
\otimes f^*(\mathcal{E},\,\overline{\partial}_\theta)_i^{\frac{\alpha-k}{m_{i,j}}}\, .$$
In particular, the jumps of the filtration occur precisely at points $\alpha$ where
$$\frac{\alpha-k}{m_{i,j}}-\alpha_i^t \in \mathbb{Z}\, \text{ for some } k\in \mathbb{Z}\, .$$
Simplifying, these are the points $\alpha$ of the form
$$\alpha = m_{i,j}\alpha_i^t + k, \quad k\in \mathbb{Z}\, .$$
In particular, the set of jumps of the filtration between 0 and 1 is
$$\left \{m_{i,j}\alpha_i^t-\lfloor m_{i,j}\alpha_i^t\rfloor\right\}$$
and the extension of the vector bundle $f^*(\mathcal{E},\,\overline{\partial}_\theta)$ to a local neighborhood of $y_{i,j}$ induced by $K$
is then isomorphic to
$$\left(f^*(\mathcal{E},\,\overline{\partial}_\theta)\right)_{y_{i,j}}^0 \,=\, \sum_{k=-\infty}^\infty \mathcal{O}_Y(-ky_{i,j})
\otimes f^*\left ((\mathcal{E},\,\overline{\partial}_\theta)_{x_i}^{-\frac{k}{m_{i,j}}}\right)
$$
$$
=
\,\sum_{t=1}^{n_i} \mathcal{O}_Y(\lfloor m_{i,j} \alpha_i^t \rfloor y_{i,j})\otimes 
f^*\left ((\mathcal{E},\,\overline{\partial}_\theta)_{x_i}^{\alpha_i^t}\right)\, ,$$
due to the following identities which are consequences of the 1-periodicity and the left continuous nature of the filtration:
\begin{multline*}
\mathcal{O}_Y(-(k+m_{i,j})y_{i,j})\otimes f^*\left ((\mathcal{E},\,\overline{\partial}_\theta)_{x_i}^{-\frac{k+m_{i,j}}{m_{i,j}}}\right)=\mathcal{O}_Y(-(k+m_{i,j})y_{i,j})\otimes f^*\left ((\mathcal{E},\,\overline{\partial}_\theta)_{x_i}^{-\frac{k}{m_{i,j}}-1}\right)\\
=\mathcal{O}_Y(-(k+m_{i,j})y_{i,j})\otimes f^*\left ((\mathcal{E},\,\overline{\partial}_\theta)_{x_i}^{-\frac{k}{m_{i,j}}}(x_i)\right)\\
=\mathcal{O}_Y(-(k+m_{i,j})y_{i,j})\otimes f^*\left ((\mathcal{E},\,\overline{\partial}_\theta)_{x_i}^{-\frac{k}{m_{i,j}}}\right)\otimes \mathcal{O}_Y(m_{i,j}y_{i,j})\\
=\mathcal{O}_Y(-ky_{i,j})\otimes f^*\left ((\mathcal{E},\,\overline{\partial}_\theta)_{x_i}^{-\frac{k}{m_{i,j}}}\right)
\end{multline*}
and, if $\frac{k}{m_{i,j}}\le \alpha_i^a<\ldots <\alpha_i^b <\frac{k+1}{m_{i,j}}$, then
\begin{multline*}
\mathcal{O}_Y(-ky_{i,j}) \otimes (\mathcal{E},\,\overline{\partial}_\theta)_{x_i}^{\frac{k}{m_{i,j}}} = \mathcal{O}_Y(-ky_{i,j}) \otimes (\mathcal{E},\,\overline{\partial}_\theta)_{x_i}^{\alpha_i^a}\\
= \mathcal{O}_Y(-ky_{i,j}) \otimes \sum_{t=a}^b (\mathcal{E},\,\overline{\partial}_\theta)_{x_i}^{\alpha_i^t}= \sum_{t=a}^b \mathcal{O}_Y(-ky_{i,j}) \otimes (\mathcal{E},\,\overline{\partial}_\theta)_{x_i}^{\alpha_i^t}\, .
\end{multline*}
Splitting the filtered bundle as a sum of parabolic line bundles it is then straightforward to check that this bundle coincides with the 
pulled back parabolic bundle $f^*(E,\, E_*)$ (defined in \eqref{eq:parpullback} and \eqref{eq:parpullback2}) on a 
neighborhood of $y_{i,j}$. Combining all these we obtain the following: The parabolic bundle on $(Y,\, B)$ induced by $K$, as an extension of 
$f^*(\mathcal{E}, \,\overline{\partial}_\theta)$, is exactly the pulled back parabolic bundle $f^*E_*$ described in Section \ref{se3}. The
result for the holomorphic bundle associated to the connection is analogous: We obtain that the extension of 
$f^*(\mathcal{E},\,\overline{\partial}_\nabla)$ to a parabolic bundle on $(Y,\,K)$ induced by $K$ is exactly $f^*(E',\,E'_*)$.

By Proposition \ref{prop1}, the the holomorphic Higgs bundle
$(f^*\mathcal{E},\,f^*\overline{\partial}_\theta,
\,f^*\theta)$ extends to a Higgs field on the parabolic vector bundle $f^*E_*$ on $Y$. Since such an extension is unique by
Remark \ref{lemma:unique}, the filtered Higgs bundle induced by $f^*(\mathcal{E},\,\mathcal{D}_\theta,\,\mathcal{D}_\nabla,\,K)$
must be the pullback $f^*(E,\,E_*,\,\theta)$ described in Section \ref{section:pullback}. Moreover, as $f^*\theta$ has regular
singularities, the harmonic bundle $(f^*\mathcal{E},\,f^*\overline{\partial}_\theta,f^*\theta)$ is tame.

Analogously, the parabolic connection $f^*\nabla$ described in Section \ref{section:pullbackConnection} is the unique extension to $f^*E_*$ 
of the holomorphic connection $(f^*\mathcal{E},\,f^*\overline{\partial}_\nabla,\,f^*\nabla)$.
\end{proof}

We can verify that the residue diagram prescribed by the construction in Section \ref{section:pullbackConnection} corresponds through 
\cite[Theorem 7]{Si} to the one which is associated to $f^*(E,\,E_*,\,\theta)$.

Let $y_{i,j}\,\in\, f^{-1}(x_i)$ be a parabolic point in $Y$ of multiplicity $m_{i,j}$. Fix a local
holomorphic coordinate $y$ of $Y$ around $y_{i,j}$ and also
a local holomorphic coordinate function $x$ of $X$ around $x_i$, so that $x\,=\,y^{m_{i,j}}$. Suppose that, locally around $x$,
the Higgs field $\theta$ is written as $\theta\,=\,\frac{A}{x}dx$, for some matrix $A$, and $\nabla$ is written as
$\nabla\,=\,\frac{A'}{x}dx+d$ for some matrix $A'$. Computing the residue map of $\theta$ or $\nabla$ is then the same as computing
the action of $x\frac{\partial}{\partial x}$, which clearly yields $\text{Res}(\theta,\,x_i)\,=\,A$ and $\text{Res}(\nabla,\,x_i)\,=\,A'$.

Observe that for each local section of the form $y^k(f^*e)(y)\,=\,y^ke(y^{m_{i,j}})$, we have
\begin{equation}
\label{eq:partial}
y\frac{\partial}{\partial y} f^*e \,=\, y\frac{\partial}{\partial y} e(y^{m_{i,j}}) \,=\,
m_{i,j}y^{m_{i,j}}\frac{\partial}{\partial x}e(x) \,=\, m_{i,j} x \frac{\partial}{\partial x} e(x).
\end{equation}
Computing the residues of $f^*\theta$ and $f^*\nabla$ then reduces to computing the action of 
$y\frac{\partial}{\partial y}$ on local sections of the form $y^{-k}(f^*e_k)
(y)\,=\,y^{-k}e_k(y^{m_{i,j}})$. For the $\mathcal{O}_Y$-linear operator $\theta$, this is then equivalent to computing the action of $mx\frac{\partial}{\partial x}$ on the corresponding local sections $e_k$ on $X$. Thus,
$$\text{Res}(f^*\theta,\,y_{i,j})\,=\,m_{i,j}\text{Res}(\theta,\,x_i)\,=\,m_{i,j}A.$$
In the case of the parabolic connection $f^*\nabla$ we have
$$\nabla_{y\frac{\partial}{\partial y}} (y^{-k}f^*e_k) \,= \, y^{-k} \nabla_{y\frac{\partial}{\partial y}}(f^*e_k) -k y^k f^*e_k .$$
Therefore, for each $e\,\in \,E_i^{\alpha_i^l}$ we have
$$\text{Res}(f^*\nabla,\,y_{i,j})(y^{-\lfloor m_{i,j}\alpha_i^l\rfloor}f^*e|_{y_{i,j}})
\,=\,\left (m_{i,j}A'-\lfloor m_{i,j} \alpha_i^l \rfloor I\right)(y^{-\lfloor m_{i,j}\alpha_i^l\rfloor}f^*e|_{y_{i,j}}) .$$

As we know the way the eigenvalues of $A$ and $A'$ are related through nonabelian Hodge correspondence \cite{Si}, we can then compute Table \ref{table:pullback} summarizing how the jumps and eigenvalues change for the parabolic Higgs bundle and the parabolic connection when the pullback is taken.

\begin{table}[h]
\begin{center}
\begin{tabular}{|c|c|c|c|c|}
\hline
&\textbf{$(E,E_*,\theta)$}&\textbf{$(E',E'_*,\nabla)$}&\textbf{$f^*(E,E_*,\theta)$}&\textbf{$f^*(E',E'_*,\nabla)$}\\
\hline
\textbf{Jumps} & $\alpha$ & $\alpha-2b$ & $m_{i,j}\alpha$ & $m_{i,j}(\alpha-2b)$ \\
\hline
\textbf{Eigenvalues} & $b+ci$ & $\alpha +2ci$ & $m_{i,j}b+m_{i,j}ci$ & $m_{i,j}\alpha+ 2m_{i,j}ci $\\
\hline
\end{tabular}
\end{center}
\caption{Relations between the jumps and eigenvalues of corresponding parabolic Higgs bundles and connections through nonabelian Hodge theory and pullbacks.}
\label{table:pullback}
\end{table}

Coherently with the 1-periodicity conditions on parabolic Higgs bundles and connections, the table is given considering the weights $\alpha \pmod{\mathbb{Z}}$. If we replace $\alpha$ by $\alpha+k$ at the jump of a connection, then we replace the corresponding eigenvalue from
$b'+c'i$ by $b'+k+c'i$.

\begin{remark}
\label{rmk:pullback}
Observe that, from this table, it is clear that the pullback sends strongly parabolic Higgs bundles ($b\,=\,c\,=\,0$) to strongly parabolic
Higgs bundles. On the connection side, this corresponds to the pullback taking connections whose residue has the same eigenvalues
as the weights to connections with the same property. As this correspondence also preserves the nilpotent part of the action of the residue
on the graded vector space with respect to the filtration, this means that the pullback preserves the ``residual'' condition described
following Definition \ref{dlc}.
\end{remark}

\begin{theorem}
\label{thm:directNAHT}
Let $(X,\,D)$ be a marked curve. Let $\phi\,:\,X\,\longrightarrow\, Z$ be a nonconstant holomorphic map of compact Riemann surfaces, and
$\Delta\,\subset\, Z$ denotes the image of the union of the ramification points and the parabolic points (see \eqref{e9}).
Take a parabolic Higgs bundle $(E,\,E_*,\,\theta)$ and a parabolic connection $(E',E'_*,\nabla)$ on $(X,\,D)$
induced by a single tame harmonic bundle $(\mathcal{E},\,\mathcal{D}_\theta,\,\mathcal{D}_\nabla,\, K)$. Then the restriction of
the direct image $(\phi_*\mathcal{E}|_{Z\backslash \Delta},\,\phi_*\mathcal{D}_\theta|_{Z\backslash \Delta},\,
\phi_*\mathcal{D}_\nabla|_{Z\backslash \Delta},\,\phi_*K|_{Z\backslash \Delta})$ is a tame harmonic bundle on $(Z,\,\Delta)$ giving a
correspondence between the direct images $\phi_*(E,\,E_*,\,\theta)$ and $\phi_*(E',\,E'_*,\,\nabla)$
defined in Section \ref{section:directimage}.
\end{theorem}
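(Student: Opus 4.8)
The plan is to mirror the structure of the proof of Theorem~\ref{thm:pullbackNAHT}, treating separately the behaviour over the \'etale locus $Z\setminus\Delta$ and the local behaviour at the points of $\Delta$. First I would restrict everything to $Z\setminus\Delta$. Since $R\cup D\subset \phi^{-1}(\Delta)_{\mathrm{red}}$, over $Z\setminus\Delta$ the map $\phi$ is a finite unramified covering of degree $d=\deg\phi$, and there are no parabolic points upstairs, so $\mathcal{E}$ is an honest harmonic bundle there. A finite covering is locally a disjoint union of biholomorphisms, so the bundle $\phi_*\mathcal{E}$, equipped with the operators $\phi_*\mathcal{D}_\theta$ and $\phi_*\mathcal{D}_\nabla$ and the fibrewise orthogonal direct-sum metric $\phi_*K$, satisfies the four conditions defining a harmonic bundle pointwise on $Z\setminus\Delta$, because each of them is preserved under orthogonal direct sums; in representation-theoretic terms this is the passage to the representation of $\pi_1(Z\setminus\Delta)$ induced from the finite-index subgroup $\pi_1(X\setminus\phi^{-1}(\Delta)_{\mathrm{red}})$, with its naturally induced equivariant harmonic map. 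Splitting the operators into their $(0,1)$ and $(1,0)$ parts commutes with the direct image, so that $(\phi_*\mathcal{E},\phi_*\overline{\partial}_\theta)=\phi_*(\mathcal{E},\overline{\partial}_\theta)$ carries the Higgs field $\phi_*\theta$ of \eqref{e20b}, and likewise $(\phi_*\mathcal{E},\phi_*\overline{\partial}_\nabla)=\phi_*(\mathcal{E},\overline{\partial}_\nabla)$ carries the connection $\phi_*\nabla$ of \eqref{e21}.

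Next I would establish tameness and acceptability. By the construction of Section~\ref{section:directimage}, $\phi_*\theta$ and $\phi_*\nabla$ extend to logarithmic objects on the holomorphic bundle $\phi_*E$ with poles only along $\Delta$; in particular their singularities on $Z\setminus\Delta$ are at worst logarithmic, so the harmonic bundle just constructed is tame, and hence by \cite{Si} the metric $\phi_*K$ is acceptable near each point of $\Delta$. This lets me invoke \cite{Si} to obtain from $\phi_*K$ a left-continuous filtration of $\phi_*\mathcal{E}(\infty\cdot\Delta)$ by the growth order of the norm, and the whole point is then to identify this metric filtration with the algebraic parabolic structure $\phi_*E_*$ built in Section~\ref{se4}.

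The heart of the argument, and the step I expect to be the main obstacle, is this local identification at a point $y\in\Delta$. I would use the canonical decomposition $(\phi_*\mathcal{E})=\bigoplus_{x\in\phi^{-1}(y)}V_x$ of \eqref{e10} and analyse each summand separately. Fix a branch $x$ of multiplicity $m_x$, a coordinate $w$ on $X$ at $x$ and a coordinate $z$ on $Z$ at $y$ with $z=w^{m_x}$, so that the radial distances satisfy $r_Z=r_X^{m_x}$. A section of $V_x$ is a section $e$ of $E$ near $x$, and for the fibrewise direct-sum metric one has $|e|_{\phi_*K}^2=\sum_{x'}|e(x')|_K^2$ over the $m_x$ preimages $x'$ of a nearby point, each satisfying $r_X=r_Z^{1/m_x}$. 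Thus a section of the form $e=w^k e'$ with $e'$ of parabolic weight $\alpha_x$ for $K$ at $x$ satisfies $|e|_{\phi_*K}\sim r_Z^{(k+\alpha_x)/m_x}$ up to bounded factors, so the metric assigns it the weight $\tfrac{k+\alpha_x}{m_x}$. This is exactly the weight attached in \eqref{e13} to the subspace $(\psi_*(E\otimes\mathcal{O}_U(-kx)))'_y\subset V_x$, so the growth filtration on each $V_x$ coincides with the filtration of Section~\ref{se4}; taking the orthogonal direct sum over $x\in\phi^{-1}(y)$ then shows that the parabolic bundle induced by $\phi_*K$ on $\phi_*(\mathcal{E},\overline{\partial}_\theta)$ is precisely $\phi_*E_*$, and likewise $\phi_*K$ induces $\phi_*E'_*$ on $\phi_*(\mathcal{E},\overline{\partial}_\nabla)$. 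The delicate points here are verifying that the colliding preimages do not spoil the orthogonal direct-sum description of $\phi_*K$ as $z\to y$ (the resulting factor $\sqrt{m_x}$ being harmless for the growth exponent), and that the acceptability bound for $K$ transforms correctly under $z=w^{m_x}$ so that \cite{Si} indeed applies on $Z$.

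Finally I would conclude as in Theorem~\ref{thm:pullbackNAHT}. The filtered Higgs field induced by the harmonic bundle is an extension of $\phi_*\theta$ to a parabolic Higgs field on $\phi_*E_*$; since such an extension is unique (as in Remark~\ref{lemma:unique}), it must be the direct image $\phi_*(E,\,E_*,\,\theta)$ of Section~\ref{section:directimage}, and the same uniqueness argument identifies the filtered connection with $\phi_*(E',\,E'_*,\,\nabla)$. Tameness of both has already been recorded, so the restricted direct image harmonic bundle realizes the nonabelian Hodge correspondence between $\phi_*(E,\,E_*,\,\theta)$ and $\phi_*(E',\,E'_*,\,\nabla)$, as claimed.
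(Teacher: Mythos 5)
Your proposal is correct and takes essentially the same approach as the paper's proof: harmonicity of the direct image over $Z\setminus\Delta$ from the locality of the four defining conditions, identification of the growth filtration of $\phi_*K$ at points of $\Delta$ with the parabolic structure of Section \ref{se4} via the coordinate relation $r_Z=r_X^{m_x}$ applied branchwise to the decomposition \eqref{e10}, and conclusion by uniqueness of the extensions of $\phi_*\theta$ and $\phi_*\nabla$ to the parabolic bundle $\phi_*E_*$. The only (harmless) difference is one of ordering: you deduce tameness from the logarithmic poles of the algebraic direct image before computing the filtration, while the paper records acceptability first and concludes tameness at the very end from the same existence of the algebraic extension.
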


\begin{proof}
Fix a decomposition $\phi^{-1}(Z\backslash \Delta)\,=\,U_1\cup\ldots\cup U_d$ into a disjoint union of open subsets of $X$. In order to
simplify notation, denote $\mathcal{F}\,:=\,\mathcal{E}|_{\phi^{-1}(Z\backslash \Delta)}$. Locally over $Z\backslash \Delta$, the direct image 
$(\phi_*\mathcal{F},\,\phi_*\mathcal{D}_\theta,\,\phi_*\mathcal{D}_\nabla,\,\phi_*K)$ is just a direct sum of the restrictions of the harmonic 
bundle $(\mathcal{F},\,\mathcal{D}_\theta,\,\mathcal{D}_\nabla,\,K)$ to each $U_i$, $i\,=\,1,\,\ldots,\,d$. As the conditions (1)--(4) in the 
definition of a harmonic bundle are local, the restriction of the direct image to $Z\backslash \Delta$ must be a harmonic bundle and 
the metric $\phi_*K|_{Z\backslash \Delta}$ is acceptable \cite[Theorem 2]{Si}.

We decompose
$$\phi_*\mathcal{D}_\theta \,=\, \phi_*\overline{\partial}_\theta + \phi_*\theta \quad \text{and} \quad \phi_*\mathcal{D}_\nabla \,=\,
\phi_*\overline{\partial}_\nabla + \phi_* \nabla\, .$$
We will analyze the filtration of $(\phi_*\mathcal{F},\,
\phi_*\overline{\partial}_\theta)$ induced by $\phi_*K|_{Z\backslash \Delta}$ around a point $z\,\in\, \Delta$. 
Let $V$ be a small disc around $x$ such that $\phi^{-1}(V)\,=\,V_1'\cup \ldots\cup V_{b_z}'$ is a disjoint union of simply connected
open subsets with $\phi^{-1}(x)\cap V_i \,=\, \{y_i\}$ for some point $y_i\,\in\, X$ and $\# \phi^{-1}(x')\bigcap V_i \, = \, m_i$
for any other $x'\,\ne \,x$. Each local holomorphic section of the direct image on a punctured disc around $x$ is of the form
$(e_1,\,\ldots,\,e_{b_z})$, where $e_i$ is a local section in a punctured neighborhood of $y_i$ in $V_i$. If $r_Z$ is the radial
distance function from $x$ and $r_{V_i}$ is the radial distance function from each $y_i$, then
$$
\left\{|(e_1,\,\ldots,\, e_{b_z})|_{\phi_*K} \,\le \, C r_Z^{\alpha-\varepsilon}\ \, \forall \varepsilon\,>\,0\right\}\, \quad \Leftrightarrow
$$
$$
\left\{|e_i|_K \,\le\, C r_Z^{\alpha-\varepsilon}\,=\,Cr_{V_i}^{m_i(\alpha-\varepsilon)}\,=\,
Cr_{V_i}^{m_i\alpha-\varepsilon'} \,\, \forall \varepsilon'\,>\,0\ \forall i\right\}\,\quad
\Leftrightarrow \quad \left\{e_i \,\in\, (\mathcal{F},\,\overline{\partial}_\theta)_{y_i}^{m_i\alpha}\,\, \forall i\right\}.
$$
Thus,
$$(\phi_*\mathcal{F},\,\phi_*\overline{\partial}_\theta)_x^\alpha \,=\, \bigoplus_{i=1}^{b_z} (\mathcal{F},\,
\overline{\partial}_\theta)_{y_i}^{m_i\alpha}\, .$$
The filtration of each component $(\mathcal{F},\,\overline{\partial}_\theta)_{y_i}^{m_i\alpha}$ jumps precisely when
$$m_i\alpha - \alpha_i^j \,=\,k\,\in\, \mathbb{Z}\, ,$$
so it jumps precisely at the points of the form
$$\alpha\,=\,\frac{\alpha_i^j+k}{m_i}$$
for some $i\,=\,1,\,\ldots,\,b_z$ and $k\,\in\, \mathbb{Z}$. As $0\,\le\, \alpha_i^j \,<\,1$, the set of these jumps between $0$ and
$1$ corresponds to taking $k\,=\,0,\,\ldots,\,m-1$ for each $i$. Moreover, observe that
\begin{multline*}
(\mathcal{F},\,\overline{\partial}_\theta)_{y_i}^{m_i\left(\alpha+\frac{1}{m_i}\right)}\,=\,
(\mathcal{F},\,\overline{\partial}_\theta)_{y_i}^{m_i\alpha}(-x)\,\supset\, \ldots \,\supset\,
(\mathcal{F},\,\overline{\partial}_\theta)_{y_i}^{m_i\left(\alpha+\frac{k}{m_i}\right)}
\,=\,(\mathcal{F},\,\overline{\partial}_\theta)_{y_i}^{m_i\alpha}(-kx)\\
\supset\, \ldots \,\supset\, (\mathcal{F},\,\overline{\partial}_\theta)_{y_i}^{m_i\left(\alpha+1\right)}
\,=\,(\mathcal{F},\,\overline{\partial}_\theta)_{y_i}^{m_i\alpha}(-m_ix)
\end{multline*}
for each $\alpha$ and $y_i$. This is precisely the structure of the direct image $\phi_*(E,\,E_*)$ constructed in \eqref{e13}, so we
conclude that the parabolic bundle on $(Z,\,\Delta)$ induced by $(\phi_*\mathcal{F},\,\phi_*\overline{\partial}_\theta,\,\phi_*K)$ is
precisely $\phi_*(E,\,E_*)$. Analogously, the parabolic bundle induced by $(\phi_*\mathcal{F},\,\phi_*\overline{\partial}_\nabla,\,
\phi_*K)$ is $\phi_*(E',\,E'_*)$.

We can now proceed analogously to Theorem \ref{thm:pullbackNAHT}. There is at most one possible extension of $\phi_*\theta$ from 
$E|_{Z\backslash \Delta}$ to a parabolic Higgs bundle on $\phi_*(E,\,E_*)$ on $Z$. As the direct image construction $\phi_*(E,E_*,\theta)$ 
described in Section \ref{section:directHiggs} provides such extension, we conclude that 
$(\phi_*\mathcal{F},\,\phi_*\mathcal{D}_\theta,\,\phi_*\mathcal{D}_\nabla,\,\phi_*K)$ must be tame and the induced filtered Higgs bundle on $Y$ by 
the tame harmonic bundle is $\phi_*(E,\,E_*,\,\theta)$. Analogously, there is at most one parabolic connection extending 
$(\phi_*\mathcal{F},\,\phi_*\overline{\partial}_\nabla,\, \phi_*\nabla)$ to $\phi_*(E',\,E'_*)$. As the direct image 
$\phi_*(E',\,E'_*,\,\nabla)$ constructed in Section \ref{section:directConn} provides such extension, it must be the one induced by the 
harmonic bundle.
\end{proof}

Finally, we can compute the residues to find an analogous table to Table \ref{table:pullback}. Take $y \,\in\, \Delta$. Decompose
\begin{equation}
(\phi_* E)_y\,=\, \bigoplus_{x_i\in \phi^{-1}(y)} V_{x_i}
\end{equation}
as in \eqref{e10}. The residue of both $\theta$ and $\nabla$ will clearly be diagonal with respect to this decomposition, so it
suffices to compute the residues of $\theta$ and $\nabla$ on $V_{x_i}$ for each $x_i\,\in\,
\phi^{-1}(y)$. Take a local holomorphic coordinate function $x$ around $x_i$, and let $m\,=\,
m_{x_i}$ be the multiplicity of $\phi$ at $x_i$. Write locally $\theta$ and $\nabla$ up to elements of order $x^m$:
$$\theta\,=\,\left(\frac{A_{-1}}{x}+A_0+A_1x+\ldots+A_{m-1}x^{m-1}\right) \otimes dx\, $$
$$\nabla\,=\,\left(\frac{A_{-1}'}{x} +A_0'+A_1'x+\ldots+A_{m-1}'x^{m-1}\right) \otimes dx +d\, .$$
Let $v\,=\,v_0+xv_1+\ldots+x^{m-1}v_{m-1}$ be a local section of $E$ up to order $m$. Then, 
$$\theta_{x\frac{\partial}{\partial x}}(v)=\sum_{k=0}^{m-1} \sum_{j=0}^{k} A_{j-1} v_{k-j} x^k+O(x^m)$$
$$\nabla_{x\frac{\partial}{\partial x}}(v)=\sum_{k=0}^{m-1} \sum_{j=0}^{k} A'_{j-1} v_{k-j} x^k+O(x^m)\, .$$
Separating locally the bases in the $1,x,\ldots,x_{m-1}$ blocks and taking into account \eqref{eq:partial} yield
$$\text{Res}(\phi_*\theta,y)|_{V_{x_i}} = \frac{1}{m} \left(\begin{array}{c|c|c|c}
A_{-1} & 0 & \cdots & 0\\
\hline
A_0 & A_{-1} & \ddots & 0\\
\hline
\vdots & \ddots& \ddots & \vdots\\
\hline
A_{m-2} & A_{m-2} & \cdots & A_{-1} 
\end{array}\right)\, ,$$
$$\text{Res}(\phi_*\theta,y)|_{V_{x_i}} = \frac{1}{m}\left(\begin{array}{c|c|c|c}
A_{-1}' & 0 & \cdots & 0\\
\hline
A_0' & A_{-1}'+I & \ddots & 0\\
\hline
\vdots & \ddots& \ddots & \vdots\\
\hline
A_{m-2}' & A_{m-2}' & \cdots & A_{-1}'+(m-1)I 
\end{array}\right)\, .$$
From this computation and the previous discussion we can then derive Table \ref{table:direct} summarizing the relations
between the jumps and eigenvalues of the previous objects and their direct images.

\begin{table}[h]
\begin{center}
\begin{tabular}{|c|c|c|c|c|}
\hline
&\textbf{$(E,E_*,\theta)$}&\textbf{$(E',E'_*,\nabla)$}&\textbf{$\phi_*(E,E_*,\theta)$}&\textbf{$\phi_*(E',E'_*,\nabla)$}\\
\hline
\textbf{Jumps} & $\alpha$ & $\alpha-2b$ & $\left \{\frac{\alpha_j}{m_j}+\frac{k}{m_j} \right\}$ & $\left \{\frac{\alpha_j-2b_j}{m_j}+\frac{k}{m_j} \right\}$ \\
\hline
\textbf{Eigenvalues} & $b+ci$ & $\alpha +2ci$ & $\left \{ \frac{b_j}{m_j}+\frac{c_j}{m_j} i\right\}$ & $\left \{ \frac{\alpha_j}{m_j}+\frac{k}{m_j}+\frac{c_j}{m_j}i\right\}$\\
\hline
\end{tabular}
\end{center}
\caption{Relations between the jumps and eigenvalues of corresponding parabolic Higgs bundles and connections through nonabelian Hodge theory and direct images.}
\label{table:direct}
\end{table}

As in Table \ref{table:pullback}, the weights are considered $\pmod{\mathbb{Z}}$ and replacing $\alpha$ by $\alpha+k$ at the jump of a 
connection results in replacing by $b'+k+c'i$ all the corresponding eigenvalues $b'+c'i$.

\begin{remark}
\label{rmk:direct}
Analogously to Remark \ref{rmk:pullback}, from the table, we conclude that the direct image sends strongly parabolic Higgs bundles to strongly parabolic Higgs bundles and ``residual'' connections to ``residual'' connections, as described by the comment after Definition \ref{dlc}.
\end{remark}

\section*{Acknowledgements}

David Alfaya would like to thank Carlos Simpson for helpful discussions about the 
nonabelian Hodge correspondence. He is supported by MICINN grant PID2019-108936GB-C21 and
the second-named author is partially supported by a J. C. Bose Fellowship.

%%%%%%%%%%%%%%%%%%%%%%%%%%%%%%%%%%%%%%%%%%%%%%%%%%%%%%%%%%%%%%%%%%%%%%%%%%%%%%%%%%%%%%%%%%%%%%%%%%%%%%%%%%%%%%%

\end{document}